\newcommand{\arxiv}[1]{{\tt arXiv:#1}}
\newtheorem{Theorem}{Theorem}[section]
\newtheorem{Lemma}[Theorem]{Lemma}
\newtheorem{Corollary}[Theorem]{Corollary}
\theoremstyle{remark}
\newtheorem{Remark}[Theorem]{Remark}
\newtheorem{Example}[Theorem]{Example}
\def\half{{\textstyle\frac{1}{2}}}
\def\HC{{\operatorname{HC}}}
\def\Lie{\operatorname{Lie}}
\def\gr{\operatorname{gr}}
\def\ad{{\operatorname{ad}}}
\def\K{{\mathbb K}}
\def\C{{\mathbb C}}
\def\F{{\mathbb F}}
\def\Z{{\mathbb Z}}
\def\Zg{\N}
\def\k{{\mathbbm k}}
\def\N{{\mathbb N}}
\def\F{{\mathrm F}}
\def\tD{{\widetilde D}}
\def\0{{\bar 0}}
\def\1{{\bar 1}}
\def\eps{{\varepsilon}}
\newcommand{\g}{\mathfrak{g}}
\newcommand{\gl}{\mathfrak{gl}}
\renewcommand{\sl}{\mathfrak{sl}}
\newcommand{\z}{\mathfrak{z}}
\renewcommand{\S}{\mathfrak{S}}
\def\sl{\mathfrak{sl}}
\def\g{{\mathfrak g}}
\def\gl{\mathfrak{gl}}
\def\r{\mathfrak r}
\def\S{\mathfrak{S}}
\def\ds{{\dot s}}
\def\dsigma{{\dot \sigma}}
\newcommand{\gs}{{\g_{\sigma}}}
\newcommand{\Us}{U(\g_{\sigma})}
\newcommand{\Ss}{S(\g_{\sigma})}
\newcommand{\Zhcs}{Z_\HC(\g_{\sigma})}
\newcommand{\Zps}{Z_p(\g_{\sigma})}
\newcommand{\Zs}{Z(\g_{\sigma})}
\newcommand{\Ys}{Y_n(\sigma)}
\newcommand{\sE}{{^\sigma\!} E}
\newcommand{\dsE}{{^\dsigma\!} E}
\newcommand{\sF}{{^\sigma\!} F}
\newcommand{\dsF}{{^\dsigma\!} F}
\newcommand{\glnt}{\gl_n[t]}
\def\Lambdap{\Lambda\!^+\!}
\def\isoto{\overset{\sim}{\longrightarrow}}
\begin{document}
\title[Yangians and shifted Yangians]{\boldmath The $p$-centre of Yangians and shifted Yangians}
\author{Jonathan Brundan and Lewis Topley}
\thanks{2010 {\it Mathematics Subject Classification}: 17B37.}
\thanks{Research of J.B. supported in part by NSF grant DMS-1700905.}

\begin{abstract}
We study the Yangian $Y_n$ associated to the general linear Lie
algebra $\gl_n$
over a field of {positive
  characteristic}, as well as its shifted analog $Y_n(\sigma)$. Our main result
gives a description of the centre of $Y_n(\sigma)$: it is a polynomial algebra
generated by its {\em Harish-Chandra centre} (which lifts the centre in characteristic zero)
together with a large {\em $p$-centre}. Moreover,
$Y_n(\sigma)$ is free as a module over its center.
In future work, it will be seen that every reduced enveloping algebra
$U_\chi(\mathfrak{gl}_n)$ 
is Morita equivalent to a quotient of an appropriate choice of
shifted Yangian, and so our results 
will have applications in classical representation theory.
\end{abstract}

\maketitle

\section{Introduction}

The Yangian $Y_n$ associated to the Lie algebra
$\mathfrak{gl}_n$ over the complex numbers was introduced by the
St. Petersburg school 
around 1980, and is one of the fundamental examples 
of quantum groups which can be defined using the {\em RTT formalism} of Faddeev,
Reshetikhin and Takhtadzhyan \cite{FRT}.
In \cite{D}, Drinfeld introduced another presentation, allowing him to extend
the notion of Yangian to all semisimple Lie algebras.
In this paper, we initiate a study of the Yangian of $\gl_n$, 
and its shifted analog in the sense of \cite{BK2},
over an arbitrary field $\k$ of positive characteristic $p > 0$.

Let us explain our motivation for doing this.
In characteristic zero, the results of \cite{BK2} show that
the shifted Yangians have some truncations which
are isomorphic to the {\em finite $W$-algebras} associated
to nilpotent orbits in $\gl_n$, as defined for
example by Premet \cite{P}. Finite $W$-algebras had appeared earlier,
in work of Kostant and subsequent work by mathematical physicists, but Premet's
motivation came from the representation theory of Lie algebras in
positive characteristic:
building on his work on the Kac-Weisfeiler conjecture \cite{P0},
he also discovered some remarkable 
finite-dimensional
{\em reduced finite $W$-algebras}, which are Morita equivalent to reduced enveloping algebras
of modular reductive Lie algebras.

Recently, the second
author jointly with Goodwin \cite{GT} has introduced 
{\em modular finite $W$-algebras}, which are the precise analog of
finite $W$-algebras in characteristic $p$. For sufficiently large $p$,
the same algebras appeared already in Premet's foundational work
\cite{PrCQ}, where they were constructed by reducing the complex finite
$W$-algebras modulo $p$.
Modular finite $W$-algebras have a large {\em $p$-centre}, and
Premet's reduced finite $W$-algebras may be recovered from modular
finite $W$-algebras by specializing at a $p$-central character.
As in characteristic zero, for the Lie algebra $\mathfrak{gl}_n$,
the modular finite $W$-algebra is a truncation of the {\em modular
  shifted Yangian} introduced here. 
The reduced finite
$W$-algebras for $\mathfrak{gl}_n$ can then be obtained from truncated
modular shifted Yangians by specializing at some $p$-central
character. For this to be useful in practice, one needs precise information about the
structure of the centre of the shifted Yangian in positive
characteristic.
This is the main purpose of the present article.

In the remainder of the introduction, we focus on the special case of
the Yangian $Y_n$, rather than its shifted analog $Y_n(\sigma)$, and
give a quick outline of our main results in this case.
We define the Yangian $Y_n$ over any field $\k$ 
to be the associative $\k$-algebra
with generators $\big\{T_{i,j}^{(r)}\:\big|\:1 \leq i,j \leq n, r > 0\big\}$
subject to the relations
\begin{equation}\label{rttnew}
\left[T_{i,j}^{(r+1)}, T_{k,l}^{(s)}\right] - \left[T_{i,j}^{(r)}, T_{k,l}^{(s+1)}\right] = T_{k,j}^{(r)} T_{i,l}^{(s)} - T_{k,j}^{(s)} T_{i,l}^{(r)}
\end{equation}
for all $1 \leq i,j,k,l \leq n$ and $r,s \geq 0$, 
adopting the convention that $T_{i,j}^{(0)} := \delta_{i,j}$.
When $\k$ is of characteristic zero, this is the usual RTT
presentation as found e.g. in \cite{MNO}, and in this 
case it is well known that $Y_n$ is 
a filtered deformation of the universal enveloping algebra
$U(\mathfrak{gl}_n[t])$ of the polynomial current algebra
$\gl_n[t]:=\gl_n \otimes \k[t]$. In fact, the same assertion is true when $\k$
is of positive characteristic, which the reader may regard as the first
evidence that this is a reasonable object to consider in the modular setting.

In characteristic zero, the centre $Z(Y_n)$ is generated freely by 
elements $\big\{C^{(r)}\:\big|\:r > 0\big\}$; see \cite[Theorem 2.13]{MNO}.
Moreover, $C^{(r+1)}$ is a lift of the obvious central
element $z_{r} := (e_{1,1}+\cdots+e_{n,n}) \otimes t^{r}$ in the
centre $Z(\gl_n[t])$ of the associated graded algebra $U(\gl_n[t])$.
The usual way to define these elements formally
is to work with generating functions in $Y_n[[u^{-1}]]$ following
\cite{MNO},
setting
\begin{equation}\label{amazing}
T_{i,j}(u) := \sum_{r \geq 0} T_{i,j}^{(r)} u^{-r} \in Y_n[[u^{-1}]].
\end{equation}
Then the central elements $C^{(r)}$ are the coefficients of
the {\em quantum determinant}
\begin{equation}\label{qdet}
C(u) = \sum_{r \geq 0} C^{(r)} u^{-r} := 
\sum_{g \in \S_n} \operatorname{sgn}(g)
T_{g(1),1}(u)
T_{g(2),2}(u-1)
\cdots T_{g(n),n}(u-n+1)
.
\end{equation}
These definitions 
makes sense when $\operatorname{char} \k > 0$ too, yielding
elements $\big\{C^{(r)}\:\big|\:r > 0\big\}$ which freely generate a
subalgebra $Z_\HC(Y_n)$ of $Z(Y_n)$, which we call the {\em Harish-Chandra centre}.
This corresponds at the level of the 
associated graded algebra to the subalgebra of 
$Z(\gl_n[t])$ generated by $\{z_{r}\:|\:r \geq 0\}$.

In positive characteristic, the current algebra $\gl_n[t]$ admits a natural structure of {\em
  restricted Lie algebra}, with $p$-map $x \mapsto x^{[p]}$ defined from
$(e_{i,j}t^r)^{[p]} := \delta_{i,j} e_{i,j} t^{rp}$.
Consequently, $U(\gl_n[t])$ also has a large {\em $p$-centre}
generated by the elements 
\begin{equation}\label{tolift}
\big\{(e_{i,j}t^{r})^p -
\delta_{i,j} e_{i,j}t^{rp}
\:\big|\:1 \leq i,j \leq n, r \geq 0\big\},
\end{equation}
and it 
is natural to look for lifts of these elements in $Z(Y_n)$.
We will show that such lifts are provided by the coefficients
$\big\{S_{i,j}^{(pr)}\:\big|\:1 \leq i,j \leq n, r > 0\big\}$ of the
power series
\begin{equation}
S_{i,j}(u) = \sum_{r \geq 0} S_{i,j}^{(r)} u^{-r} := T_{i,j}(u)
T_{i,j}(u-1)\cdots T_{i,j}(u-p+1).
\end{equation}
In
fact, these coefficients freely generate another subalgebra $Z_p(Y_n)$ of $Z(Y_n)$ which we call
the {\em $p$-centre} of $Y_n$.
Together, $Z_\HC(Y_n)$ and $Z_p(Y_n)$ generate the full centre
$Z(Y_n)$. 
In fact, we will show that $Z(Y_n)$ is a free polynomial algebra
generated by
\begin{equation}
\big\{C^{(r)}\:\big|\:r > 0\big\}
\cup\big\{S_{i,j}^{(pr)}\:\big|\:1 \leq i,j\leq n\text{ with }(i,j)\neq (1,1),r
> 0\big\}.
\end{equation}
Moreover, $Z_\HC(Y_n) \cap Z_p(Y_n)$ is
freely generated 
by elements $\big\{BC^{(pr)}\:\big|\:r > 0\big\}$ defined from
\begin{equation}
BC(u) = \sum_{r \geq 0} BC^{(r)} u^{-r} := C(u) C(u-1)\cdots C(u-p+1).
\end{equation}
Finally, $Y_n$ is free (of infinite rank) as a module over its centre.

To prove these statements, we found that it was easier to work
initially with the Drinfeld presentation rather than the RTT
presentation of $Y_n$, exploiting a different choice of lifts of 
the elements (\ref{tolift}) adapted to the Drinfeld generators.
There is also a similar family of lifts which plays the analogous role
for the shifted Yangians $Y_n(\sigma)$. We refer the
reader to Theorem~\ref{main} for the precise statement of our main result
in the general setup.

The remainder of the article is organized as follows.
In section 2, we make some auxiliary calculations with the sorts of
power series 
that will be used to define the central elements of $Y_n(\sigma)$
later on.
In particular, the results in this section are needed to compute
images of these central elements in the associated graded algebra.
Then section 3 investigates the centre of this 
associated graded algebra. The main result here, Theorem~\ref{loopcentre}, provides
the key upper bound needed in our computation of the centre of $Y_n(\sigma)$
later on.
We introduce the modular Yangians and shifted Yangians in section 4,
extending some of the fundamental results such as Drinfeld-type
presentation from characteristic zero to characteristic $p$.
Section 5 contains our main results describing the centre 
of $Y_n(\sigma)$ in positive
characteristic.
Finally, in section 6, we 
define a modular version of the Yangian of $\sl_n$,
which is a certain subalgebra $SY_n$ of
$Y_n$. We give a presentation for $SY_n$ valid in any characteristic;
see Theorem~\ref{newsy}.
As we explain in the subsequent remark, this presentation 
is equivalent to the usual 
Drinfeld presentation for the Yangian of $\sl_n$ taken over the field
$\k$ whenever $\operatorname{char} \k \neq 2$; Drinfeld's presentation does
not even make sense in characteristic 2 since it involves some halves.
We also show  that
$Y_n \cong SY_n \otimes Z_{\HC}(Y_n)$ providing $p \nmid n$, and use this
to deduce the results about $Z(Y_n)$ in terms of the RTT generators as
formulated in this introduction.

\vspace{2mm}
\noindent
{\bf General conventions.}
Let $\Zg$ denote the set of natural numbers $\{0,1,2,\dots\}$.
Always, $\k$ will be a ground field of characteristic $p > 0$
and $\otimes$ denotes $\otimes_{\k}$.
Let $\gl_n := \gl_n(\k)$ for some fixed $n \geq 1$.
We denote its matrix units by $\{e_{i,j}\:|\: i,j=1,\dots,n\}$ as usual.

By a {\em filtration} on a (unital) $\k$-algebra $A$, we always mean an ascending chain of
subspaces $$
\F_0 A \subseteq \F_1 A \subseteq \F_2 A
\subseteq \cdots
$$
with $A = \bigcup_{r\geq 0} \F_r A$ such that $1 \in \F_0 A$ and
$(\F_r A) (\F_s A) \subseteq \F_{r+s} A$ for all $r,s \in
\N$.
Setting $\F_{-1} A := \{0\}$ by convention, the {\em associated graded
 algebra} is 
the (unital) positively graded $\k$-algebra 
$\gr A = \bigoplus_{r \in \N} (\gr  A)_r$
with
$(\gr A)_r :=
\F_r A /
\F_{r-1} A.$
Let $\gr_r:\F_r A \rightarrow (\gr A)_r$ be the canonical quotient
map.
Given a subalgebra $B$ of $A$, we always give $B$ the induced
filtration defined from 
$F_r B := B \cap F_r A$. Then the associated graded algebra $\gr
B$ is naturally identified with a graded subalgebra of $\gr A$.

Given an $n$-tuple $\lambda =(\lambda_1,\dots,\lambda_n) \in \Zg^n$, 
let $\ell(\lambda) := \big|\{i=1,\dots,n\:|\: \lambda_i \neq 0\}\big|$ and 
$|\lambda| := \sum_{i=1}^{n} \lambda_i$.
Then set
 \begin{align*}
\Lambda(n,r)&:= \left\{ \lambda \in \Zg^n \:\big|\: |\lambda| = r\right\},&
\Lambdap(n,r) &:= \left\{\lambda \in \Lambda(n,r) \:\big|\: \lambda_1 \geq \lambda_2 \geq \cdots \geq \lambda_{n}\right\}.
\end{align*}
Elements of $\Lambda(n,r)$ and $\Lambdap(n,r)$ are $n$-part 
{\em  compositions} and {\em partitions} of $r$, respectively.
The symmetric group $\S_n$
acts on the left on $\Lambda(n,r)$ by place permutation, and
$\Lambdap(n,r)$ gives a set of orbit representatives.
Also let $\S_\lambda \leq \S_n$ denote the stabilizer of $\lambda \in
\Lambdap(n,r)$, which is a parabolic subgroup of $\S_n$.
Let $\subseteq$ be the partial order on $\N^n$ 
defined from $\lambda \subseteq \mu$ if $\lambda_i \leq \mu_i$ for each $i=1,\dots,n$.

We will use the following elementary facts several times.

\begin{Lemma}\label{L:zerolem}
For $\lambda \in \Lambdap(p,r)$,
$|\S_p/\S_{\lambda}|$ is
non-zero in the field $\k$ if and only if $p | r$ and
$\lambda_1=\dots=\lambda_p = r/p$, 
in which case it equals $1$.
\end{Lemma}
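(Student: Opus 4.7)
The plan is to compute $|\S_p/\S_\lambda|$ as an explicit multinomial coefficient and then analyze its $p$-adic valuation. First I would group the entries of $\lambda = (\lambda_1,\dots,\lambda_p)$ by equal value: let $\mu_1 > \mu_2 > \cdots > \mu_k \geq 0$ be the distinct values occurring in $\lambda$, with multiplicities $m_1,\dots,m_k$ respectively, so that $m_1 + \cdots + m_k = p$ and each $m_i \geq 1$. Since $\S_\lambda$ is the subgroup of $\S_p$ fixing $\lambda$ under place permutation, it factorises as $\S_\lambda \cong \S_{m_1} \times \cdots \times \S_{m_k}$, hence
$$
|\S_p/\S_\lambda| = \frac{p!}{m_1!\,m_2!\cdots m_k!}.
$$

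Next I would compute the $p$-adic valuation of this quantity. Since $p$ is prime, $v_p(p!) = 1$. For $1 \leq m_i \leq p$ one has $v_p(m_i!) = \lfloor m_i/p \rfloor$, which equals $0$ if $m_i < p$ and $1$ if $m_i = p$. If $k \geq 2$ then every $m_i$ is strictly less than $p$, so the denominator is prime to $p$ and $v_p(|\S_p/\S_\lambda|) = 1$; consequently the quotient is zero in $\k$. The only remaining case is $k = 1$, where necessarily $m_1 = p$, the multinomial evaluates to $1$, and all entries of $\lambda$ equal $\mu_1$; since $p\mu_1 = r$ this forces $p \mid r$ and $\lambda_1 = \cdots = \lambda_p = r/p$.

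Conversely, in that situation $\S_\lambda = \S_p$, so $|\S_p/\S_\lambda| = 1 \neq 0$ in $\k$, completing the equivalence. There is no genuine obstacle here: the argument is a one-line multinomial computation combined with the elementary fact that the $p$-adic valuation of $n!$ is $\lfloor n/p \rfloor$ for $n \leq p$.
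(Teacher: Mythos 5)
Your proof is correct, and since the paper dismisses this lemma with the single word ``Obvious,'' your multinomial computation $|\S_p/\S_\lambda| = p!/(m_1!\cdots m_k!)$ together with the valuation argument is exactly the intended (and standard) justification. Nothing is missing: the key points --- that $\S_\lambda$ is the Young subgroup determined by the multiplicities, that $v_p(p!)=1$, and that the denominator is prime to $p$ unless some $m_i=p$ --- are all stated and correct.
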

\begin{proof}
Obvious.
\end{proof}

\begin{Lemma}\label{hop}
In any associative $\k$-algebra $A$, we have that
$(\ad\, x)^p = \ad\,(x^p)$
for $x \in A$.
\end{Lemma}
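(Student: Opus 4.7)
The plan is to use the decomposition $\ad x = L_x - R_x$, where $L_x, R_x \colon A \to A$ denote left and right multiplication by $x$ respectively. These two operators commute, since for any $y \in A$ we have $L_x R_x(y) = x y x = R_x L_x(y)$ by associativity. This is the crucial fact that makes everything work.

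Because $L_x$ and $R_x$ commute, I can apply the usual binomial theorem to expand
\[
(\ad x)^p = (L_x - R_x)^p = \sum_{k=0}^{p} \binom{p}{k} (-1)^{p-k} L_x^{k} R_x^{p-k}.
\]
Now I invoke the hypothesis $\operatorname{char} \k = p$: all the middle binomial coefficients $\binom{p}{k}$ for $0 < k < p$ vanish, so only the $k=0$ and $k=p$ terms survive, giving
\[
(\ad x)^p = L_x^p + (-1)^p R_x^p.
\]
Since $L_x^p = L_{x^p}$ and $R_x^p = R_{x^p}$, this equals $L_{x^p} + (-1)^p R_{x^p}$. When $p$ is odd this is $L_{x^p} - R_{x^p} = \ad(x^p)$; when $p = 2$ the sign is irrelevant in characteristic $2$, and we again obtain $\ad(x^p)$.

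There is no real obstacle here: the lemma is a one-line consequence of the commutativity of left and right multiplication together with the vanishing of middle binomial coefficients modulo $p$. The only small subtlety is the sign bookkeeping in the case $p = 2$ versus $p$ odd, which is handled uniformly by noting that $-1 = 1$ in characteristic two.
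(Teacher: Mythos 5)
Your proof is correct and follows the same route as the paper: write $\ad x$ as the difference of the commuting left and right multiplication operators and apply the characteristic-$p$ binomial identity $(L_x-R_x)^p=L_x^p-R_x^p$. The sign discussion for $p=2$ is a nice touch but the paper treats it identically (implicitly, since $-1=1$ there).
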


\begin{proof}
We have that $\ad\, x = \lambda_x - \rho_x$,
where $\lambda_x,\rho_x:A \rightarrow A$ denote the commuting operations of left and right multiplication by
$x$, respectively. Hence $(\ad\, x)^p = (\lambda_x - \rho_x)^p =
(\lambda_x)^p - (\rho_x)^p = \lambda_{x^p}-\rho_{x^p} = \ad (x^p)$ 
since we are in characteristic $p$.
\end{proof}

\section{Power series}\label{S:powerseries}

We often need to manipulate power series in $A[[u^{-1}]]$,
where $A$ is some $\k$-algebra (typically, some Yangian). When doing
so, it is sometimes convenient to
work in the slightly larger ring $A[[u^{-1}]][u]$ of formal Laurent
series in $u^{-1}$ with coefficients in $A$.
In this section, we make some general observations about some
particular power
series which will be needed in several key places later on.

\subsection{Series of type I}\label{S:typeI}
In this subsection, we assume that $A_{\rm I}$ is 
a filtered $\k$-algebra  and that we are given elements
$\left\{X^{(r)} \in \F_{r-1} A_{\rm I}\:|\: r \geq 0\right\}$
such that $\left[\gr_r X^{(r+1)}, \gr_s X^{(s+1)}\right] = 0$ for all $r,s > 0$.
Since $\F_{-1} A_{\rm I} = \{0\}$ by our general conventions, we
necessarily have that $X^{(0)} = 0$. Consider the power series
\begin{equation}
X_{\rm I}(u) = \sum_{r \geq 0} X_{\rm I}^{(r)} u^{-r} := \left(\sum_{r \geq 0} X^{(r)} u^{-r}\right)^p \in A_{\rm I}[[u^{-1}]].
\end{equation}

\begin{Lemma}\label{I}
We have that $X_{\rm I}^{(r)} = 0$ for $r < p$
and $X_{\rm I}^{(p)} = \big(X^{(1)}\big)^p \in \F_0 A_{\rm I}$.
If $r > p$ and $p \mid r$ then $X_{\rm I}^{(r)} \in \F_{r-p} A_{\rm I}$ and 
$X_{\rm I}^{(r)} \equiv (X^{(r/p)})^p \pmod
{\F_{r-p-1} A_{\rm I}}$.
Finally, if $r > p$ and $p \nmid r$ then $X_{\rm I}^{(r)} \in \F_{r-p-1}
A_{\rm I}$.
\end{Lemma}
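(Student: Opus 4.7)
The plan is to expand $X_{\rm I}^{(r)}$ as a convolution, track the filtration degrees of the summands, and then use the commutativity of the leading symbols together with Lemma~\ref{L:zerolem} to identify the surviving terms after passing to the associated graded.

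First I would write
\begin{equation*}
X_{\rm I}^{(r)} = \sum_{\substack{r_1,\dots,r_p \geq 0 \\ r_1 + \cdots + r_p = r}} X^{(r_1)} X^{(r_2)} \cdots X^{(r_p)}.
\end{equation*}
Since $X^{(0)} = 0$, only tuples with all $r_i \geq 1$ contribute; this already gives $X_{\rm I}^{(r)} = 0$ for $r < p$ and $X_{\rm I}^{(p)} = (X^{(1)})^p \in \F_0 A_{\rm I}$. For $r > p$, each surviving summand $X^{(r_1)} \cdots X^{(r_p)}$ lies in $\F_{(r_1-1)+\cdots+(r_p-1)} A_{\rm I} = \F_{r-p} A_{\rm I}$, so $X_{\rm I}^{(r)} \in \F_{r-p} A_{\rm I}$.

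Next I would compute $\gr_{r-p} X_{\rm I}^{(r)}$ by setting $y_i := \gr_{i-1} X^{(i)} \in (\gr A_{\rm I})_{i-1}$. By hypothesis the $y_i$'s commute, so grouping summands by the underlying multiset and reindexing by partitions,
\begin{equation*}
\gr_{r-p} X_{\rm I}^{(r)} = \sum_{\substack{\lambda \in \Lambdap(p,r) \\ \lambda_p \geq 1}} |\S_p / \S_\lambda|\; y_{\lambda_1} y_{\lambda_2} \cdots y_{\lambda_p},
\end{equation*}
where the multinomial coefficient $|\S_p/\S_\lambda|$ records the number of orderings of the multiset. Applying Lemma~\ref{L:zerolem}, the only non-zero contribution in characteristic $p$ comes from the constant partition $\lambda = (r/p,\dots,r/p)$, which exists precisely when $p \mid r$, and has coefficient $1$.

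Therefore, if $p \nmid r$, every term vanishes in the graded, giving $X_{\rm I}^{(r)} \in \F_{r-p-1} A_{\rm I}$. If instead $p \mid r$ and $r > p$, then
\begin{equation*}
\gr_{r-p} X_{\rm I}^{(r)} = y_{r/p}^{\,p} = \bigl(\gr_{r/p - 1} X^{(r/p)}\bigr)^p = \gr_{r-p}\bigl((X^{(r/p)})^p\bigr),
\end{equation*}
whence $X_{\rm I}^{(r)} \equiv (X^{(r/p)})^p \pmod{\F_{r-p-1} A_{\rm I}}$, as required. The only subtle point is the combinatorial regrouping in the associated graded, and that is handled cleanly by Lemma~\ref{L:zerolem}; everything else is bookkeeping of filtration degrees.
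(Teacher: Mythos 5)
Your proof is correct and follows essentially the same route as the paper's: expand the $p$-th power as a sum over compositions of $r$ into $p$ parts, use $X^{(0)}=0$ and the filtration degrees to handle $r\le p$ and get $X_{\rm I}^{(r)}\in\F_{r-p}A_{\rm I}$, then use commutativity of the leading symbols to regroup by $\S_p$-orbits and invoke Lemma~\ref{L:zerolem}. The only cosmetic difference is that you compute directly in the associated graded algebra while the paper phrases the same step as a congruence modulo $\F_{r-p-1}A_{\rm I}$.
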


\begin{proof}
We obviously have 
that
$X_{\rm I}^{(r)} = \sum_{\lambda \in \Lambda(p,r)} X^{(\lambda)}$
where
$X^{(\lambda)} := 
X^{(\lambda_1)} X^{(\lambda_2)} \cdots X^{(\lambda_{p})}$.
Since $X^{(0)} = 0$, the result follows at once when $r < p$.
Now suppose that $r \ge p$. Then we get 
that
$X_{\rm I}^{(r)} \in \F_{r-p} A_{\rm I}$. The commuting assumption
on the elements $\gr_{r-1} X^{(r)}$ gives that
$$
X^{(g\cdot\lambda)} \equiv
X^{(\lambda)} \pmod{\F_{r-p-1}A_{\rm I}}
$$ 
for all $\lambda \in 
\Lambda(p,r)$ and $g \in \S_p$. 
As $\Lambdap(p,r)$ is a set of orbit representatives,
we deduce that
$$
X_{\rm I}^{(r)} = \sum_{\lambda\in \Lambdap(p,r)} 
\sum_{g \S_\lambda \in \S_p/\S_{\lambda}} X^{(g\cdot\lambda)}\equiv
\sum_{\lambda\in \Lambdap(p,r)}
|\S_p/\S_{\lambda}| X^{(\lambda)} \pmod{\F_{r-p-1} 
A_{\rm I}}.
$$
Now apply Lemma~\ref{L:zerolem} to complete the proof.
\end{proof}

\begin{Remark}\label{bladerunner}
When we meet the series of type I later on, the elements $X^{(r)}$
will satisfy the additional relations
\begin{equation}\label{s}
\big[X^{(r)}, X^{(s)}\big] = \sum_{t=r}^{s-1} X^{(t)} X^{(r+s-1-t)}
\end{equation}
for all $1 \leq r < s$.
Using this assumption, we will see indirectly that the elements
$X_{\rm I}^{(r)}$ can be expressed as polynomials in
the commuting elements $\big\{ (X^{(s)})^p\:\big|\:0 < s \leq \lfloor
  r / p \rfloor\big\}$;
see Remark~\ref{later1} below.
For example, when $p=2$, we have from the original definition that
$X_{\rm I}^{(3)} = X^{(1)} X^{(2)} + X^{(2)} X^{(1)}$,
which using (\ref{s})
may be simplified to obtain $X_{\rm I}^{(3)} = (X^{(1)})^2$.
Here are some other examples when $p=2$ under this assumption:
\begin{align*}
X_{\rm I}^{(0)} = X_{\rm I}^{(1)} &= 0,\\
X_{\rm I}^{(2)} = X_{\rm I}^{(3)} = X_{\rm I}^{(5)}  &= (X^{(1)})^2,\\
X_{\rm I}^{(4)} &= (X^{(2)})^2 + (X^{(1)})^2,\\
X_{\rm I}^{(6)} &= (X^{(3)})^2 + (X^{(2)})^2 + (X^{(1)})^2,\\
X_{\rm I}^{(7)} &= (X^{(3)})^2+(X^{(1)})^2,\\
X_{\rm I}^{(8)} &= (X^{(4)})^2 + (X^{(2)})^2 + (X^{(1)})^2.
\end{align*}
When $p > 2$ one can show
that $X_{\rm I}^{(p+1)} = 0$ assuming (\ref{s}).
\end{Remark}

\subsection{Series of type II}\label{S:typeII}
Fix $n \geq 1$ and a sequence $\left(d_r\right)_{r \geq 0}$ of natural numbers
with $d_0 = 0$. 
For $\mu \in \N^n$, define
\begin{equation}
\label{e:noptimal}
d_{\mu} := \sum_{i=1}^{n} d_{\mu_i}.
\end{equation}
We say that $r > 1$ is \emph{optimal} if
$d_{\mu}
\lneq
d_r$
for all 
$\mu \in \N^n$
such that either $|\mu| < r$, or $|\mu| = r$ and $\ell(\mu) > 1$.
The sole observation we need about this notion is the following:

\begin{Lemma}\label{L:optlemma}
Take $m \geq 1$ and suppose that the sequence $(d_r)_{r \geq
  0}$ is defined from
$d_r := 0$ for $r < m$ and 
$d_r  := m\lfloor \frac{r}{m} \rfloor - m$ for $r\geq m$. Then $mr$ is optimal for every $r > 1$. 
\end{Lemma}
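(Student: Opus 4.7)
The plan is a straightforward arithmetic check. First, since $r \geq 2$, unpacking the definition gives $d_{mr} = m\lfloor mr/m\rfloor - m = m(r-1) > 0$, so it suffices to show $d_\mu < m(r-1)$ whenever either $|\mu| < mr$, or $|\mu| = mr$ with $\ell(\mu) > 1$.

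The workhorse is the following estimate, valid for any $\mu \in \N^n$: setting $a := \bigl|\{i : \mu_i \geq m\}\bigr|$ and noting that $d_{\mu_i} = m\lfloor \mu_i/m\rfloor - m$ when $\mu_i \geq m$ while $d_{\mu_i} = 0$ otherwise, subadditivity of the floor function yields
\[
d_\mu \;=\; m\sum_{i=1}^{n}\lfloor \mu_i/m\rfloor - m a \;\leq\; m\lfloor |\mu|/m\rfloor - m a.
\]
With this in hand, the argument splits into four quick cases. If $a = 0$ then $d_\mu = 0 < m(r-1)$. If $a \geq 1$ and $|\mu| < mr$, then $\lfloor |\mu|/m\rfloor \leq r-1$, so $d_\mu \leq m(r-1) - m = m(r-2) < m(r-1)$. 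If $|\mu| = mr$ with $\ell(\mu) > 1$ and $a \geq 2$, then $\lfloor |\mu|/m\rfloor = r$ gives $d_\mu \leq mr - 2m = m(r-2) < m(r-1)$. The only case needing a separate observation is $|\mu| = mr$ with $\ell(\mu) > 1$ and $a = 1$: here the hypothesis $\ell(\mu) > 1$ forces another positive part, so the unique large coordinate $\mu_{i_1}$ satisfies $\mu_{i_1} \leq mr-1$ and thus $\lfloor \mu_{i_1}/m\rfloor \leq r-1$, whence $d_\mu \leq m(r-2) < m(r-1)$.

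There is no serious obstacle; the only point worth flagging is that both halves of the optimality hypothesis really are used in this last subcase, since without $\ell(\mu) > 1$ one could take $\mu = (mr, 0, \dots, 0)$, which gives $d_\mu = m(r-1)$ and breaks the strict inequality.
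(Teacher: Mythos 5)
Your proof is correct and takes essentially the same route as the paper's: both arguments case-split on the number of parts $\mu_i \geq m$ and reduce to the bound $\sum_i m\lfloor \mu_i/m\rfloor \leq |\mu|$, your floor-superadditivity estimate being just a repackaging of the paper's decomposition $\mu_i = ma_i + b_i$. The subcase you flag (one large part, $|\mu| = mr$, $\ell(\mu) > 1$) is handled the same way in the paper, and your closing remark correctly identifies why $\ell(\mu) > 1$ is needed there.
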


\begin{proof}
Take $r > 1$.
We must show that
$d_{\mu} < mr-m$
for $\mu \in \N^n$
such that either $|\mu| < mr$,
or $|\mu|=mr$ and $\ell(\mu) > 1$.
Take any such composition $\mu$. 
Write each $\mu_i$ as $m a_i + b_i$ 
for $a_i \geq 0$ and $0 \leq b_i < m$.
If $a_i = 0$ for all $i$, then
$d_{\mu} = 0 < mr-m$.
If there is only one $i$ such that $a_i > 0$ then
$d_\mu = m a_i - m$ for this $i$.
This is less than $m r-m$ if $|\mu| < m r$ since
$m a_i \leq |\mu|$.
It is also less than $m r-m$ if $|\mu|=m r$ since in that case
$m a_i < |\mu|$ as $\mu$ has at least one other non-zero part
besides
$m a_i+b_i$.
Finally, if $a_i > 0$ for at least two different $i$, then
$$
d_\mu = 
\sum_{i\,\text{with}\,a_i > 0} \left(m a_i-m\right) 
<
\left(\sum_{i\,\text{with}\,a_i > 0} m a_i\right) - m
\leq m r -m,
$$
and we are done.
\end{proof}

For the remainder of the subsection, we assume in addition that
 $A_{\rm II}$ is
some filtered $\k$-algebra, 
and that we are given commuting elements
$\big\{X_i^{(r)} \in \F_{d_r} A_{\rm II}\:\big|\: 1 \leq i \leq n, r > 0\big\}$.
Setting $X_i^{(0)} := 1$ for all $i$,
which belongs to $\F_{d_0} A_{\rm II}$ as $d_0=0$,
consider the power series
\begin{eqnarray}
X_{\rm II}(u) = \sum_{r\geq 0} X_{\rm II}^{(r)} u^{-r} :=
\prod_{i=1}^{n} \left(\sum_{s \geq 0} X_{i}^{(s)} (u-i+1)^{-s}\right)
\in A_{\rm II}[[u^{-1}]].
\end{eqnarray}

\begin{Lemma}\label{Haha}
We have that
$$
X_{\rm II}^{(r)} = 
\sum_{\lambda \in \Lambda(n,r)}
\sum_{\mu \subseteq\lambda}
\left(
\prod_{i=1}^n \binom{\lambda_i-1}{\lambda_i-\mu_i}
(i-1)^{\lambda_i-\mu_i}
\right)
X^{(\mu)}
$$
where
$X^{(\mu)}:=X_1^{(\mu_1)} \cdots X_{n}^{(\mu_n)}$.
\end{Lemma}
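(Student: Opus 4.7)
The plan is to expand each of the $n$ factors in the definition of $X_{\rm II}(u)$ as a formal power series in $u^{-1}$, multiply them together using the hypothesis that the $X_i^{(r)}$ commute, and then read off the coefficient of $u^{-r}$.

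The key computation is a formal binomial expansion: for each $i$ and $s \geq 0$, writing $a := i-1$, we have
\begin{equation*}
(u-(i-1))^{-s} = u^{-s}\left(1 - \frac{a}{u}\right)^{-s} = \sum_{k \geq 0} \binom{s+k-1}{k} a^k\, u^{-s-k}.
\end{equation*}
Substituting $\mu_i := s$ and $\lambda_i := s+k$ this becomes
\begin{equation*}
(u-(i-1))^{-\mu_i} = \sum_{\lambda_i \geq \mu_i} \binom{\lambda_i - 1}{\lambda_i - \mu_i} (i-1)^{\lambda_i - \mu_i}\, u^{-\lambda_i}.
\end{equation*}
One checks the boundary case $\mu_i=0$: the factor $X_i^{(0)} = 1$ and $(u-(i-1))^0 = 1$, consistent with the formula using $\binom{-1}{0}=1$ and $0^0 = 1$ for $i=1$.

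Next I would substitute this into the $i$-th factor of the defining product of $X_{\rm II}(u)$, getting
\begin{equation*}
\sum_{s \geq 0} X_i^{(s)} (u-i+1)^{-s} = \sum_{\mu_i \geq 0} \sum_{\lambda_i \geq \mu_i} \binom{\lambda_i - 1}{\lambda_i - \mu_i} (i-1)^{\lambda_i - \mu_i} X_i^{(\mu_i)}\, u^{-\lambda_i},
\end{equation*}
and then take the product over $i=1,\dots,n$. Because the hypothesis guarantees that the elements $X_i^{(r)}$ all commute, the product of the $X_i^{(\mu_i)}$ factors consolidates unambiguously into $X^{(\mu)} := X_1^{(\mu_1)}\cdots X_n^{(\mu_n)}$. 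This yields
\begin{equation*}
X_{\rm II}(u) = \sum_{\mu \in \N^n} \sum_{\lambda \supseteq \mu} \left(\prod_{i=1}^n \binom{\lambda_i - 1}{\lambda_i - \mu_i} (i-1)^{\lambda_i - \mu_i}\right) X^{(\mu)}\, u^{-|\lambda|}.
\end{equation*}
Finally, extracting the coefficient of $u^{-r}$ amounts to restricting to $\lambda \in \Lambda(n,r)$, which gives exactly the claimed formula.

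There is no real obstacle here; the only point requiring a moment of care is the bookkeeping of the binomial identity $\binom{-s}{k} = (-1)^k \binom{s+k-1}{k}$ and verifying that the $\mu_i=0$ (equivalently $X_i^{(0)}=1$) boundary term is correctly absorbed into the formula. Commutativity of the $X_i^{(r)}$, which is part of the hypothesis, is what makes the rearrangement of the product legitimate; without it one could not collapse the factors into a single symbol $X^{(\mu)}$.
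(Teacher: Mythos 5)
Your proof is correct and is essentially identical to the paper's: the same binomial expansion of $(u-i+1)^{-s}$, the same reindexing by $\lambda_i = s+t$, and the same expansion of the $n$-fold product followed by extraction of the $u^{-r}$-coefficient. One tiny quibble: commutativity of the $X_i^{(r)}$ is not actually needed to collapse the product into $X^{(\mu)}$, since the definition $X^{(\mu)}:=X_1^{(\mu_1)}\cdots X_n^{(\mu_n)}$ already fixes the same order $i=1,\dots,n$ in which the factors appear and the binomial coefficients are scalars.
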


\begin{proof}
By the binomial expansion,
\begin{align*}
(u - i+1)^{-s} 
&= u^{-s}(1 + (1-i) u^{-1})^{-s} =
 \sum_{t\geq 0} {\binom{-s}{t}} (1-i)^t u^{-s-t}\\
&= 
 \sum_{t\geq 0} {\binom{s+t-1}{t}} (i-1)^t u^{-s-t}
= 
 \sum_{t\geq s} {\binom{t-1}{t-s}} (i-1)^{t-s} u^{-t}.
 \end{align*}
It follows that
\begin{align*}
\sum_{s \geq 0} X_{i}^{(s)}(u-i+1)^{-s} &= 
\sum_{s \geq 0}\sum_{t \geq s}
\binom{t-1}{t-s} (i-1)^{t-s} X_i^{(s)}u^{-t}\\
&=\sum_{t \geq 0} \sum_{s=0}^t
\binom{t-1}{t-s} (i-1)^{t-s} X_i^{(s)}u^{-t}.
\end{align*}
Now take the product of this over $i=1,\dots,n$ then 
expand the $n$ parentheses, using the parts of $\lambda$ and $\mu$ to index the $t$-
and $s$-summations coming from each bracket, to deduce
$$
X_{\rm II}(u) = 
\sum_{\lambda \in \N^n}
\sum_{\mu \subseteq\lambda}
\left(
\prod_{i=1}^n \binom{\lambda_i-1}{\lambda_i-\mu_i}
(i-1)^{\lambda_i-\mu_i}
\right)
X^{(\mu)}u^{-|\lambda|}.
$$
It just remains to take the $u^{-r}$-coefficient.
\end{proof}

\begin{Lemma}
\label{L:type2}
We always have that $X_{\rm II}^{(1)} = X_1^{(1)}+\cdots+X_n^{(1)} \in
\F_{d_1} A_{\rm II}$.
Moreover, if $r > 1$ is optimal 
then
$X_{\rm II}^{(r)} \in \F_{d_r} A_{\rm II}$ and 
$X_{\rm II}^{(r)} \equiv X_1^{(r)} + \cdots + X_{n}^{(r)} 
\pmod{\F_{d_r-1} A_{\rm II}}.$
\end{Lemma}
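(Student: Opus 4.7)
The plan is to apply Lemma~\ref{Haha} directly and then split the resulting sum according to the shape of $\mu$, using the optimality hypothesis to push everything except the diagonal contribution into $\F_{d_r-1} A_{\rm II}$. Concretely, write
$$
X_{\rm II}^{(r)} = \sum_{\lambda \in \Lambda(n,r)} \sum_{\mu \subseteq \lambda} c_{\lambda,\mu}\, X^{(\mu)}, \qquad c_{\lambda,\mu} := \prod_{i=1}^n \binom{\lambda_i-1}{\lambda_i-\mu_i}(i-1)^{\lambda_i-\mu_i}.
$$
Since the $X_i^{(s)}$ commute and each factor $X_i^{(\mu_i)}$ lies in $\F_{d_{\mu_i}} A_{\rm II}$, we have $X^{(\mu)} \in \F_{d_\mu} A_{\rm II}$.

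For $r=1$, I would just enumerate: the only $\mu$ in play are $0$ and the standard unit vectors in $\N^n$, and the convention $\binom{0}{1}=0$ kills the $\mu=0$ contributions, leaving $X_{\rm II}^{(1)} = \sum_{k=1}^n X_k^{(1)}$, which clearly lies in $\F_{d_1} A_{\rm II}$.

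For $r>1$ optimal, I would partition the inner sum on $\mu$ into two cases. First, suppose $\mu$ is concentrated at a single index, i.e.\ $\mu_k = r$ for some $k$ and $\mu_i = 0$ for $i \neq k$. Then $\mu \subseteq \lambda$ together with $|\lambda| = r$ forces $\lambda = \mu$, and the coefficient simplifies to $c_{\mu,\mu} = \prod_i \binom{\mu_i-1}{0}(i-1)^0 = 1$, contributing $X_k^{(r)} \in \F_{d_r} A_{\rm II}$. Second, suppose $\mu$ is not concentrated at a single index, so either $|\mu|<r$ or $|\mu|=r$ with $\ell(\mu)>1$; the optimality hypothesis then gives $d_\mu < d_r$, hence $X^{(\mu)} \in \F_{d_\mu} A_{\rm II} \subseteq \F_{d_r-1} A_{\rm II}$ irrespective of its coefficient. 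Summing these contributions yields both $X_{\rm II}^{(r)} \in \F_{d_r} A_{\rm II}$ and the claimed congruence $X_{\rm II}^{(r)} \equiv \sum_{k=1}^n X_k^{(r)} \pmod{\F_{d_r-1} A_{\rm II}}$.

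The argument is essentially bookkeeping built on Lemma~\ref{Haha}; the real content is encoded in the optimality condition, which was designed precisely so that every non-diagonal $\mu$ lies in a strictly smaller filtration step. The only small point that needs care is the reduction $\lambda = \mu$ in the single-index case, which isolates the diagonal contribution and ensures the congruence.
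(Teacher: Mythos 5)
Your proof is correct and follows essentially the same route as the paper's: both apply Lemma~\ref{Haha}, use $X^{(\mu)} \in \F_{d_\mu} A_{\rm II}$, and invoke optimality to show that the only contributions surviving modulo $\F_{d_r-1} A_{\rm II}$ are those with $\mu=\lambda$ and $\ell(\lambda)=1$. The paper's version is simply terser; your extra bookkeeping (the forced equality $\lambda=\mu$ and the coefficient computation $c_{\mu,\mu}=1$) is accurate.
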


\begin{proof}
The formula for $X_{\rm II}^{(1)}$ follows immediately from Lemma~\ref{Haha}.
Now assume that $r > 1$ is optimal.
We have that $X^{(\mu)} \in \F_{d_\mu} A_{\rm II}$.
So by the definition of optimal and Lemma~\ref{Haha}, we get that $X_{\rm II}^{(r)} \in
\F_{d_r} A_{\rm II}$. Moreover, the only terms in the summation from
that lemma which are not contained in $\F_{d_r-1}
A_{\rm II}$ arise when $\mu = \lambda$ and $\ell(\lambda) = 1$.
\end{proof}

\subsection{Series of type III}\label{S:typeIII}
The third type of power series we consider is defined similarly to type {\rm II}, however the resulting coefficients are slightly more complicated to describe
due to additional $\S_p$-symmetry.
Suppose that $A_{\rm III}$ is a filtered $\k$-algebra
containing commuting elements $\left\{X^{(r)} \in \F_{r-1} A_{\rm III} \:\big|\:
r > 0\right\}$.
Setting $X^{(0)} := 1 \in \F_0 A_{\rm III}$, consider
\begin{eqnarray}\label{sun}
X_{\rm III}(u) = \sum_{r\geq 0} X_{\rm III}^{(r)} u^{-r} := \prod_{i=1}^{p}\left(\sum_{r\geq 0} X^{(r)} (u - i+1)^{-r}\right).
\end{eqnarray}
Like in previous subsections, our goal is to obtain information
about the top degree component of the elements $X_{\rm III}^{(r)}$.
Henceforth, $X^{(\lambda)}$ will denote
$X^{(\lambda_1)} \cdots X^{(\lambda_p)}$ for 
$\lambda \in \Zg^p$; this is different to the usage in the previous subsection.
Notice that $X^{(\lambda)} \in \F_{|\lambda| - \ell(\lambda)} A_{\rm III}$.

\begin{Lemma}\label{smoke}
We have that
$$
X_{\rm III}^{(r)} = 
\sum_{s=0}^r \sum_{\mu \in \Lambdap(p,s)}
\sum_{\nu \in \Lambda(p,r-s)}
\sum_{g \S_\mu \in \S_p / \S_\mu}
\left(
\prod_{i=1}^p \binom{\mu_{g^{-1}(i)}+\nu_i-1}{\nu_i}
(i-1)^{\nu_i}
\right)
X^{(\mu)}.
$$
\end{Lemma}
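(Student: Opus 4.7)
The plan is to mimic the derivation in Lemma~\ref{Haha}, starting from the binomial expansion
$$(u-i+1)^{-s} = \sum_{\nu \geq 0} \binom{s+\nu-1}{\nu}(i-1)^{\nu}u^{-s-\nu}$$
that was already established in the proof of that lemma. Substituting this into each of the $p$ factors of (\ref{sun}) gives
$$\sum_{r\geq 0} X^{(r)}(u-i+1)^{-r} = \sum_{s_i, \nu_i \geq 0} \binom{s_i+\nu_i-1}{\nu_i}(i-1)^{\nu_i} X^{(s_i)} u^{-s_i-\nu_i},$$
so after multiplying over $i=1,\dots,p$ and collecting the coefficient of $u^{-r}$ we obtain
$$X_{\rm III}^{(r)} = \sum_{s=0}^{r} \sum_{\lambda \in \Lambda(p,s)} \sum_{\nu \in \Lambda(p,r-s)} \left( \prod_{i=1}^p \binom{\lambda_i+\nu_i-1}{\nu_i}(i-1)^{\nu_i} \right) X^{(\lambda_1)} X^{(\lambda_2)} \cdots X^{(\lambda_p)}.$$

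The next step is to convert the inner sum over the composition $\lambda \in \Lambda(p,s)$ into a sum over the partition $\mu \in \Lambdap(p,s)$, using the place permutation action of $\S_p$ on $\Lambda(p,s)$ for which $\Lambdap(p,s)$ is a set of orbit representatives and $\S_\mu$ is the stabilizer of $\mu$. Since the $X^{(r)}$ commute, we have $X^{(\lambda_1)} \cdots X^{(\lambda_p)} = X^{(\mu)}$ whenever $\lambda = g \cdot \mu$, where $(g \cdot \mu)_i = \mu_{g^{-1}(i)}$. Substituting $\lambda_i = \mu_{g^{-1}(i)}$ into the binomial coefficient expression yields exactly the formula stated in the lemma.

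There is no real obstacle here, only a small bookkeeping check that the reparametrization $\lambda \leftrightarrow (\mu, g\S_\mu)$ is a bijection between $\Lambda(p,s)$ and $\bigsqcup_{\mu \in \Lambdap(p,s)} \S_p/\S_\mu$, which is immediate from the orbit–stabilizer theorem. Notice that the $\nu$-summation is not affected by the $\S_p$-action, since the factors $(i-1)^{\nu_i}$ distinguish the positions $i=1,\dots,p$; this is precisely why the partition sum can absorb only the $\mu$-symmetry and the stated sum retains the $\S_p$-summation weighted against $\nu$.
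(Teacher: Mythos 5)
Your proof is correct and follows essentially the same route as the paper's: you arrive at the identical intermediate formula $\sum_{s}\sum_{\lambda\in\Lambda(p,s)}\sum_{\nu\in\Lambda(p,r-s)}\prod_i\binom{\lambda_i+\nu_i-1}{\nu_i}(i-1)^{\nu_i}X^{(\lambda)}$ and then perform the same composition-to-partition reparametrization via the orbit--stabilizer bijection. The only cosmetic difference is that the paper obtains that intermediate formula by invoking Lemma~\ref{Haha} inside a $p$-fold tensor product of $A_{\rm III}$ and then multiplying out the tensors, whereas you expand the product directly; both are the same bookkeeping.
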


\begin{proof}
Let $A_{\rm II}$ be the tensor product over $\k$ of $p$ copies of
$A_{\rm III}$, and define $$
X_i^{(r)} := 1^{\otimes(i-1)} \otimes X^{(r)} \otimes
 1^{\otimes(p-i)} \in A_{\rm II}.
$$
Applying Lemma~\ref{Haha} with $n=p$, 
we deduce that the power series
$$
X_{\rm II}(u) = \sum_{r \geq 0}X_{\rm II}^{(r)} u^{-r} :=
\prod_{i=1}^{p} \left(\sum_{r\geq 0} X_i^{(r)} (u -i+1)^{-r}\right) \in A_{\rm
  II}[[u^{-1}]]
$$
satisfies
$$
X_{\rm II}^{(r)} = 
\sum_{\lambda \in \Lambda(p,r)}
\sum_{\mu \subseteq\lambda}
\left(
\prod_{i=1}^p \binom{\lambda_i-1}{\lambda_i-\mu_i}
(i-1)^{\lambda_i-\mu_i}
\right)
X^{(\mu_1)} \otimes X^{(\mu_2)} \otimes\cdots\otimes X^{(\mu_p)}.
$$
Then we apply the linear map $A_{\rm II} \rightarrow A_{\rm
  III}$
defined by multiplying out the tensors. This maps $X_{\rm II}^{(r)} \mapsto X_{\rm
  III}^{(r)}$, so we deduce that
\begin{align*}
X_{\rm III}^{(r)} &= 
\sum_{\lambda \in \Lambda(p,r)}
\sum_{\mu \subseteq\lambda}
\left(
\prod_{i=1}^p \binom{\lambda_i-1}{\lambda_i-\mu_i}
(i-1)^{\lambda_i-\mu_i}
\right)
X^{(\mu)}\\
&=
\sum_{s=0}^r
\sum_{\mu \in \Lambda(p,s)}
\sum_{\nu \in \Lambda(p,r-s)}
\left(
\prod_{i=1}^p \binom{\mu_i+\nu_i-1}{\nu_i}
(i-1)^{\nu_i}
\right)
X^{(\mu)},
\end{align*}
where to get the second equation we switched the summations then replaced $\lambda$ by $\mu+\nu$.
Since $X^{(\mu)} = X^{(\mu')}$ if $\mu$ and $\mu'$ are in the
same $\S_p$-orbit, we can simplify this further to get
the final formula.
\end{proof}

We need to study the expression from Lemma~\ref{smoke} further.
Let $\k[x_1,\dots,x_p]^{\S_p}$ be the algebra of symmetric polynomials
over $\k$.
It is well known that this is freely generated by $\eps_1,\dots,\eps_p$, the
{\em elementary symmetric polynomials} defined from
$$
\eps_r = \eps_r(x_1,\dots,x_p) := \sum_{1 \leq i_1 < \cdots < i_r \leq p} x_{i_1} \cdots x_{i_r}.
$$
We also have the {\em power sums}
$$
\pi_r = \pi_r(x_1,\dots,x_p) := x_1^r + \cdots + x_p^r.
$$
These do not generate $\k[x_1,\dots,x_p]^{\S_p}$ since we are in
positive characteristic, but nevertheless
 every homogeneous symmetric polynomial of degree $< p$ can be
written as a polynomial in the power sums $\pi_1,\dots,\pi_{p-1}$. This follows by a
simple inductive argument from 
Newton's formula
which holds over the integers and hence over $\k$:
\begin{eqnarray}\label{e:newton}
k \eps_k = \sum_{i=1}^k (-1)^{i-1} \pi_i \eps_{k-i}.
\end{eqnarray}

\begin{Lemma}\label{L:vanishingpolys}
If
$f(x_1,\dots,x_p) \in \k[x_1,\dots,x_p]^{\S_p}$ is homogeneous 
of degree $0 < l < p-1$ then $f(0,1,\dots,p-1)=0$.
\end{Lemma}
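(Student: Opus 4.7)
The plan is to use the hint supplied immediately before the lemma: every homogeneous symmetric polynomial of degree strictly less than $p$ is a polynomial in the power sums $\pi_1, \dots, \pi_{p-1}$, which follows inductively from Newton's identity (\ref{e:newton}). Assigning $\pi_k$ the usual degree $k$, the hypothesis $0 < l < p-1$ forces $f$ to be a polynomial (without constant term, since it is homogeneous of positive degree) in $\pi_1, \dots, \pi_l$. Because $f(0, 1, \dots, p-1)$ is then a polynomial in the scalars $\pi_1(0,1,\dots,p-1), \dots, \pi_l(0,1,\dots,p-1)$ with no constant term, it suffices to check that each of these scalars vanishes in $\k$.

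So the heart of the matter is the elementary fact that
\begin{equation*}
\pi_k(0,1,\dots,p-1) = 1^k + 2^k + \cdots + (p-1)^k \equiv 0 \pmod{p}
\end{equation*}
whenever $1 \le k \le l < p-1$. I would prove this by picking a generator $g$ of the cyclic group $(\Z/p\Z)^\times$, so that the set $\{1, 2, \dots, p-1\}$ coincides modulo $p$ with $\{g^0, g^1, \dots, g^{p-2}\}$. Then the sum in question equals
\begin{equation*}
\sum_{j=0}^{p-2} g^{jk} = \frac{g^{(p-1)k}-1}{g^k-1}
\end{equation*}
in $\Z/p\Z$, which is legitimate precisely because $g^k \neq 1$ in $\Z/p\Z$ (this uses the hypothesis $k < p-1$, which guarantees $(p-1) \nmid k$). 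The numerator is $0$ by Fermat's little theorem, hence the whole sum is $0$.

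Combining the two observations yields $f(0,1,\dots,p-1) = 0$, as required. I do not expect any serious obstacle: the first step is a direct appeal to the stated consequence of Newton's formula, and the second is a standard computation with the multiplicative group modulo $p$. The only minor care needed is to ensure that the hypothesis $l < p-1$ is genuinely used (to exclude $k = p-1$, where the power sum would equal $-1$ rather than $0$) and that the case $p = 2$ is vacuous since then the range $0 < l < p-1$ is empty.
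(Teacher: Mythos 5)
Your proof is correct. The first reduction is exactly the paper's: both of you invoke the remark preceding the lemma (a consequence of Newton's formula (\ref{e:newton})) to write $f$ as a constant-free polynomial in the power sums, so that everything comes down to showing $\pi_k(0,1,\dots,p-1)=0$ for $1\le k\le p-2$. Where you genuinely diverge is in proving this vanishing. The paper substitutes $k=p-1$ into Pascal's identity $\sum_{m=0}^{l}\binom{l+1}{m}(1^m+2^m+\cdots+k^m)=(k+1)^{l+1}-1$ and then runs an induction on $l=1,\dots,p-2$, using that the coefficients $\binom{l+1}{m}$ are non-zero in $\k$ in that range. You instead use the cyclicity of $(\Z/p\Z)^\times$: with $g$ a generator, the sum $1^k+\cdots+(p-1)^k$ becomes the geometric series $\sum_{j=0}^{p-2}g^{jk}$, which vanishes because $g^k\neq 1$ (as $(p-1)\nmid k$) while $g^{(p-1)k}=1$ by Fermat. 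Your route is the standard number-theoretic one and avoids the induction, at the cost of invoking the existence of a primitive root; the paper's stays entirely within integer binomial identities. Both arguments correctly locate where the hypothesis $l<p-1$ is used (to exclude $k=p-1$, where the power sum equals $-1$), and both correctly dispose of $p=2$ as vacuous.
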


\begin{proof}
There is nothing to do if $p=2$, so assume also
that $p > 2$.
By the remarks preceeding the lemma, it suffices to prove the claim that
$\pi_l(0,1,\dots,p-1)=0$
for $l=1,2,\dots,p-2$.
To see this, we appeal to the following famous identity, valid over $\Z$, which
is due to Pascal:
$$
\sum_{m=0}^l \binom{l+1}{m} (1^m + 2^m + \cdots + k^m) = (k+1)^{l+1} - 1$$
for all $k, l \in \Zg$. 
Substituting $k = p-1$ and working over $\k$ 
we get that
\begin{equation*}
\sum_{m=1}^l \binom{l+1}{m} \pi_m(0,1,\dots,p-1)
=
\sum_{m=1}^l \binom{l+1}{m} (1^m + 2^m + \cdots + (p-1)^m) = 0.
\end{equation*}
The claim follows easily from this by induction on $l=1,\dots,p-2$; 
one needs to note that $\binom{l+1}{m}
$ is non-zero in $\k$ for
$1\leq l \leq p-2$ and $m=1,...,l$.
\end{proof}

Now take $r \geq 0$ and $\mu \in \Lambdap(p,s)$ for some $0 \leq s
\leq r$, and define
\begin{equation}\label{not}
\gamma^{(r)}_\mu(x_1,\dots,x_p) :=
\sum_{\nu \in \Lambda(p,r-|\mu|)}
\sum_{g \S_\mu \in \S_p / \S_\mu}
\left(
\prod_{i=1}^p 
\binom{\mu_{g^{-1}(i)}+\nu_i-1}{\nu_i}
x_i^{\nu_i}
\right).
\end{equation}
This is relevant because by Lemma~\ref{smoke} we have that 
\begin{equation}\label{funny}
X_{\rm III}^{(r)} =
\sum_{s=0}^r 
\sum_{\mu \in \Lambdap(p,s)} 
\gamma^{(r)}_\mu(0,1,2,\dots,p-1) X^{(\mu)}.
\end{equation}
In fact, $\gamma^{(r)}_\mu$ belongs to $\k[x_1,\dots,x_p]^{\S_p}$:

\begin{Lemma}\label{tis}
$\gamma^{(r)}_\mu(x_1,\dots,x_p)$ is a homogeneous 
symmetric polynomial of degree $r-|\mu|$.
\end{Lemma}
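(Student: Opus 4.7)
Homogeneity is immediate from the definition (\ref{not}): every summand is a scalar multiple of $\prod_{i=1}^p x_i^{\nu_i}$, and the constraint $\nu \in \Lambda(p,r-|\mu|)$ forces each monomial to have total degree $|\nu| = r-|\mu|$. So the only real content is $\S_p$-invariance.

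Before checking symmetry, I would first verify that each summand in (\ref{not}) depends only on the coset $g\S_\mu$ and not on the representative $g$. If $\sigma \in \S_\mu$, then $\sigma$ stabilises $\mu$ under place permutation, so $\mu_{\sigma^{-1}(k)}=\mu_k$ for all $k$; hence $\mu_{(g\sigma)^{-1}(i)} = \mu_{\sigma^{-1}g^{-1}(i)} = \mu_{g^{-1}(i)}$. So the product $\prod_i \binom{\mu_{g^{-1}(i)}+\nu_i-1}{\nu_i} x_i^{\nu_i}$ is well-defined on cosets, and the sum over $g\S_\mu \in \S_p/\S_\mu$ makes sense.

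For symmetry, fix $h \in \S_p$ and compute
$$
\gamma^{(r)}_\mu(x_{h(1)},\dots,x_{h(p)}) = \sum_{\nu \in \Lambda(p,r-|\mu|)} \sum_{g\S_\mu \in \S_p/\S_\mu} \prod_{i=1}^p \binom{\mu_{g^{-1}(i)}+\nu_i-1}{\nu_i} x_{h(i)}^{\nu_i}.
$$
Reindex the product by $j = h(i)$, which converts $x_{h(i)}^{\nu_i}$ to $x_j^{\nu_{h^{-1}(j)}}$ and replaces $\mu_{g^{-1}(i)}$ by $\mu_{g^{-1}h^{-1}(j)} = \mu_{(hg)^{-1}(j)}$. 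Now substitute $\nu'_j := \nu_{h^{-1}(j)}$ (which is a bijection on $\Lambda(p,r-|\mu|)$) and $g' := hg$ (which permutes $\S_p/\S_\mu$); the expression becomes exactly $\gamma^{(r)}_\mu(x_1,\dots,x_p)$, proving $\S_p$-invariance.

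No real obstacle arises here; the proof is essentially a change-of-variables bookkeeping argument, with the only delicate point being the well-definedness on cosets noted above, which uses precisely the defining property of the stabiliser $\S_\mu$.
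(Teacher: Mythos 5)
Your proposal is correct and follows essentially the same route as the paper: homogeneity is read off from the constraint $|\nu|=r-|\mu|$, and symmetry is obtained by applying $h$, reindexing the product via $j=h(i)$, and absorbing the permutation into the $\nu$-sum and the coset sum via $\nu\mapsto h\cdot\nu$ and $g\mapsto hg$. Your preliminary check that each summand depends only on the coset $g\S_\mu$ is a sensible extra detail that the paper leaves implicit.
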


\begin{proof}
The claim about degree is clear.
For $h \in \S_p$, we have that
\begin{align*}
h \cdot \gamma^{(r)}_\mu(x_1,\dots,x_p) &= 
\sum_{\nu \in \Lambda(p,r-|\mu|)}
\sum_{g \S_\mu \in \S_p / \S_\mu}
\left(
\prod_{i=1}^p 
\binom{\mu_{g^{-1}(i)}+\nu_i-1}{\nu_i}
x_{h(i)}^{\nu_i}
\right)\\
&=\sum_{\nu \in \Lambda(p,r-|\mu|)}
\sum_{g \S_\mu \in \S_p / \S_\mu}
\left(
\prod_{i=1}^p 
\binom{\mu_{(hg)^{-1}(i)}+\nu_i-1}{\nu_i}
x_{i}^{\nu_{i}}
\right)
\end{align*}
which equals 
$\gamma^{(r)}_\mu(x_1,\dots,x_p)$
because $h$ permutes the coset space $\S_p/\S_{\mu}$.
Hence, it is a symmetric polynomial.
\end{proof}

Now we can obtain our main result about the top degree component of
$X_{\rm III}^{(r)}$.

\begin{Lemma}\label{birds}
We have $X_{\rm III}^{(0)} = 1$, $X_{\rm III}^{(r)} = 0$ for $r =
1,\dots,p-1$, and
$$
X_{\rm III}^{(p)} = (X^{(1)})^p - X^{(1)} \in \F_0 A_{\rm III}.
$$
If $r > p$ and $p \mid r$
then 
$X_{\rm III}^{(r)} \in \F_{r-p} A_{\rm III}$ and
$$
X_{\rm III}^{(r)} \equiv (X^{(r/p)})^p - X^{(r-p + 1)}\pmod{\F_{r-p - 1} A_{\rm III}}.
$$
Finally, if $r > p$ and $p \nmid r$
then $X_{\rm III}^{(r)} \in \F_{r-p-1} A_{\rm III}$.
\end{Lemma}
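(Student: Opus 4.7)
The plan is to start from the explicit formula
$$
X_{\rm III}^{(r)} = \sum_{s=0}^r \sum_{\mu \in \Lambdap(p,s)} \gamma^{(r)}_\mu(0,1,\dots,p-1)\, X^{(\mu)}
$$
provided by (\ref{funny}), combined with the filtration bound $X^{(\mu)} \in \F_{|\mu|-\ell(\mu)} A_{\rm III}$. Since Lemma~\ref{tis} tells us $\gamma^{(r)}_\mu$ is homogeneous symmetric of degree $r-|\mu|$, applying Lemma~\ref{L:vanishingpolys} will immediately kill every coefficient with $0 < r-|\mu| < p-1$. This leaves three regimes to inspect: $|\mu| = r$, $|\mu| = r-p+1$, and $|\mu| \leq r-p$.

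I would first analyse the layer $|\mu|=r$. Here $\gamma^{(r)}_\mu$ is a constant, and a direct look at (\ref{not}) shows it equals $|\S_p/\S_\mu|$; by Lemma~\ref{L:zerolem} this vanishes unless $p\mid r$ and $\mu$ is the rectangular partition with all parts equal to $r/p$, in which case it equals $1$ and gives the term $(X^{(r/p)})^p \in \F_{r-p} A_{\rm III}$. For the layer $|\mu|=r-p+1$, only the one-part partition $\mu = (r-p+1,0,\dots,0)$ produces an element outside $\F_{r-p-1} A_{\rm III}$, since larger $\ell(\mu)$ drops the filtration by $\ell(\mu)-1$. A short binomial calculation in (\ref{not}) will reduce $\gamma^{(r)}_\mu$ for this $\mu$ to $\binom{r-1}{p-1}\pi_{p-1}(x_1,\dots,x_p)$; Fermat's little theorem gives $\pi_{p-1}(0,1,\dots,p-1) = -1$ in $\k$, while Lucas's theorem gives $\binom{r-1}{p-1} \equiv 1 \pmod p$ when $p\mid r$ and $\equiv 0$ otherwise. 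So this layer contributes exactly $-X^{(r-p+1)}$ in the divisible case and nothing in the non-divisible case.

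The remaining range $|\mu|\leq r-p$ with $\mu \neq 0$ is controlled by the filtration bound alone: $X^{(\mu)} \in \F_{|\mu|-1} \subseteq \F_{r-p-1} A_{\rm III}$. The residual case $\mu=0$ with $r>0$ is dispatched by direct inspection of (\ref{not}): the factor $\binom{\nu_i-1}{\nu_i}$ vanishes whenever $\nu_i > 0$, forcing $\gamma^{(r)}_0 = 0$. Assembling these contributions recovers all four assertions; the small cases $0<r<p$ and $r=p$ fall out from the same analysis once one notes that the middle layer is vacuous for $r<p-1$ and merges with $|\mu|=1$ when $r=p$.

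The main obstacle will be the bookkeeping between filtration layers: one must ensure that intermediate values of $|\mu|$ neither slip into the top layer nor survive the coefficient. Lemma~\ref{L:vanishingpolys} does the heavy lifting on the former, while the explicit evaluations of $\pi_{p-1}(0,1,\dots,p-1)$ and $\binom{r-1}{p-1}$ modulo $p$ are the arithmetic inputs that produce the correction term $-X^{(r-p+1)}$ in the divisible case.
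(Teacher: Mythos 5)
Your proposal is correct and follows essentially the same route as the paper's proof: both start from (\ref{funny}), use Lemmas~\ref{tis} and \ref{L:vanishingpolys} to annihilate the layers with $0<r-|\mu|<p-1$, evaluate the top layer via Lemma~\ref{L:zerolem}, and isolate the single-part partition $\mu=(r-p+1,0,\dots,0)$ whose coefficient reduces to $\binom{r-1}{p-1}\pi_{p-1}(0,1,\dots,p-1)=-\binom{r-1}{p-1}$. Your use of Fermat and Lucas to evaluate this is just a slightly more packaged version of the paper's direct computation, so nothing essential differs.
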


\begin{proof}
Putting Lemmas~\ref{L:vanishingpolys}--\ref{tis} together shows that
\begin{equation*}
\gamma^{(r)}_\mu(0,1,\dots,p-1)=0
\qquad\text{whenever}\qquad
0 < r - |\mu| < p-1.
\end{equation*}
Also from (\ref{not}), it is clear that $\gamma^{(r)}_\mu = \delta_{r,0}$
in case $|\mu| = 0$. The case $|\mu| = r$ is also easy to understand:
we then have simply that $\gamma^{(r)}_\mu = |\S_p / \S_\mu|$, which is
$1$ if all parts of $\mu$ are equal and $0$ otherwise, thanks to Lemma~\ref{L:zerolem}.
So we can deduce from (\ref{funny}) that
\begin{equation}\label{funny2}
X_{\rm III}^{(r)} = 
\left\{
\begin{array}{ll}
1&\text{if $r=0$,}\\
\displaystyle
 (X^{(r/p)})^p + \sum_{s=1}^{r-p+1}\sum_{\mu \in \Lambdap(p,s)}
 \gamma^{(r)}_\mu(0,1,\dots,p-1) X^{(\mu)}&\text{if $p \mid r > 0$,}\\
\displaystyle\sum_{s=1}^{r-p+1}\sum_{\mu \in \Lambdap(p,s)}
\gamma^{(r)}_\mu(0,1,\dots,p-1) X^{(\mu)}&\text{if $p \nmid r > 0$.}
\end{array}\right.
\end{equation}
The lemma follows immediately from this in case $r < p$.
Now assume that $r \geq p$.
For $1 \leq s \leq r-p+1$ and $\mu \in \Lambda^+(p,s)$,
we have that $X^{(\mu)} \in \F_{|\mu|-\ell(\mu)} A_{\rm III}\subseteq 
\F_{r-p} A_{\rm III}$, showing that $X_{\rm
  III}^{(r)} \in \F_{r-p} A_{\rm III}$.
Moreover, $X^{(\mu)} \in \F_{r-p-1} A_{\rm III}$
unless $\mu = (r-p+1,0,\dots,0)$.
To complete the proof,  we show for this $\mu$ that
$$
\gamma_\mu^{(r)}(0,1,\dots,p-1)
= 
-\binom{r-1}{p-1},
$$
which is $0$ if $p \nmid r$ and $-1$ if $p \mid r$.

So $\mu = (r-p + 1, 0,\dots,0)$.
A set of representatives for $\S_p/\S_\mu$ is given by the $p$
distinct powers of the $p$-cycle $(1\:2\:\cdots \:p)$.
The product of binomial coefficients in the definition of
$\gamma_\mu^{(r)}(0,1,\dots,p-1)$
is non-zero only when $\nu$ has just one non-zero part, which is necessarily
equal to $p-1$. Moreover,
if this non-zero part is the $j$th part of $\nu$, we must have that $\mu_{g^{-1}(j)} =
r-p+1$ too, i.e. there is just one choice of $g$ that gives a
non-zero contribution.
We deduce that
$$
\gamma_\mu^{(r)}(0,1,\dots,p-1) = \sum_{j=1}^p
\binom{r-p+1+p-1-1}{p-1}(j-1)^{p-1}
\equiv-\binom{r-1}{p-1} \pmod{p}
$$
as claimed.
\end{proof}

\begin{Remark}\label{rain}
We will show later on that the elements $X_{\rm III}^{(r)}$ for $p \nmid r$ can be
expressed as polynomials in $\big\{X_{\rm III}^{(ps)}\:\big|\:
0 < s \leq \lfloor r/p\rfloor\big\}$; see Remark~\ref{later2}. It seems to be
hard to give a direct proof of this.
Here are some explicit examples which we computed by hand using (\ref{funny2}).
When $p=2$:
\begin{align*}
X_{\rm III}^{(1)} &= 0,\\
X_{\rm III}^{(2)} =X_{\rm III}^{(3)} = X_{\rm III}^{(5)} &= (X^{(1)})^2 + X^{(1)},\\
X_{\rm III}^{(4)} &= 
(X^{(2)})^2+X^{(3)}+ X^{(1)}X^{(2)}+X^{(2)} + X_{\rm III}^{(2)},\\
X_{\rm III}^{(6)} &= (X^{(3)})^2 + X^{(5)}
+ X^{(1)} X^{(3)}
+ X^{(1)} X^{(4)}
+X^{(2)}X^{(3)}
+X^{(3)}
+ X_{\rm III}^{(4)}.
\end{align*}
When $p=3$:
\begin{align*}
X_{\rm III}^{(1)} = X_{\rm III}^{(2)} = X_{\rm III}^{(4)} &= 0,\\
X_{\rm III}^{(3)} = X_{\rm III}^{(5)} &= (X^{(1)})^3 - X^{(1)},\\
X_{\rm III}^{(6)} &= (X^{(2)})^3 - X^{(4)} + X^{(1)} X^{(3)}-
(X^{(1)})^2 X^{(2)}+X^{(2)}-(X^{(2)})^2.
\end{align*}
Also, for all $p > 2$, we have that $X_{\rm III}^{(p+1)} = 0$.
\end{Remark}

\subsection{Series of type IV}
The fourth type is defined in almost the same way as type III.
So again we assume that $A_{\rm IV}$ is a filtered $\k$-algebra
containing commuting elements $\left\{X^{(r)} \in \F_{r-1} A_{\rm IV} \:\big|\:
r > 0\right\}$. However now we set $X^{(0)} := 0$, before defining
$$
X_{\rm IV}(u) = \sum_{r \geq 0} X_{\rm IV}^{(r)} u^{-r}
$$
by the same formula (\ref{sun}) which we used to define $X_{\rm III}(u)$.

The elements $X_{\rm IV}^{(r)}$ are given by the same formula
that was derived in
Lemma~\ref{smoke}
for the elements $X_{\rm III}^{(r)}$. However now for $\mu \in \Lambda(p,s)$
the monomial
$X^{(\mu)} = X^{(\mu_1)} \cdots X^{(\mu_p)}$ is zero
unless $\ell(\mu) = p$.
The following is an immediate consequence of Lemma~\ref{birds} using this observation.

\begin{Lemma}\label{lemIV}
We have that $X_{\rm IV}^{(r)} = 0$ for $r < p$
and
$X_{\rm IV}^{(p)} =(X^{(1)})^p \in \F_0 A_{\rm IV}.$
If $r > p$ and $p \mid r$
then 
$X_{\rm IV}^{(r)} \in \F_{r-p} A_{\rm IV}$ and
$X_{\rm IV}^{(r)} \equiv (X^{(r/p)})^p \pmod{\F_{r-p - 1} A_{\rm IV}}.$
Finally, if $r > p$ and $p \nmid r$
then $X_{\rm IV}^{(r)} \in \F_{r-p-1} A_{\rm IV}$.
\end{Lemma}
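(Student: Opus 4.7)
The plan is to deduce this lemma directly from Lemma \ref{birds}, exploiting the only structural change between the type III and type IV settings: the convention $X^{(0)} := 1$ has been replaced by $X^{(0)} := 0$. Since the formal product defining $X_{\rm IV}(u)$ uses the same formula (\ref{sun}) that defines $X_{\rm III}(u)$, the expansion obtained in Lemma \ref{smoke}, and in particular its refinement (\ref{funny2}) proved inside Lemma \ref{birds}, remains valid verbatim with $X$ interpreted in $A_{\rm IV}$.

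Next I would observe that in the type IV setting every monomial $X^{(\mu)} = X^{(\mu_1)} \cdots X^{(\mu_p)}$ with some $\mu_i = 0$ vanishes, so only partitions $\mu \in \Lambdap(p,s)$ with all $p$ parts strictly positive, equivalently with $\ell(\mu) = p$ and $s = |\mu| \geq p$, can contribute to $X_{\rm IV}^{(r)}$. The secondary term $-X^{(r-p+1)}$ that appeared in Lemma \ref{birds} arose precisely from the partition $\mu = (r-p+1, 0, \ldots, 0)$ with $\ell(\mu) = 1$, and hence disappears in the present setting. By contrast, the leading term $(X^{(r/p)})^p$ comes from $\mu = (r/p, r/p, \ldots, r/p)$, whose parts are all $r/p \geq 1$, and survives unchanged.

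Finally I would collect the filtration bounds. For $r < p$ no partition $\mu \in \Lambdap(p,s)$ with $s \leq r$ can have $\ell(\mu) = p$, so every summand vanishes and $X_{\rm IV}^{(r)} = 0$. For $r = p$ the only contribution is from $\mu = (1,1,\ldots,1)$, yielding $X_{\rm IV}^{(p)} = (X^{(1)})^p \in \F_0 A_{\rm IV}$. For $r > p$, every $\mu$ appearing in the sum of (\ref{funny2}) with $\ell(\mu) = p$ satisfies $|\mu| \leq r-p+1$, so $X^{(\mu)} \in \F_{|\mu|-p} A_{\rm IV} \subseteq \F_{r-2p+1} A_{\rm IV} \subseteq \F_{r-p-1} A_{\rm IV}$ since $p \geq 2$. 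Combined with the leading term $(X^{(r/p)})^p \in \F_{r-p} A_{\rm IV}$, which is present exactly when $p \mid r$, this yields each of the three claimed congruences.

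The main obstacle is really nothing more than careful bookkeeping: since (\ref{funny2}) is already available, the proof reduces to isolating which monomials persist once the convention $X^{(0)} = 0$ is imposed. The only subtle point is to confirm that the leading $(X^{(r/p)})^p$ survives while the corrective $-X^{(r-p+1)}$ is killed, and that all other contributions drop one filtration level below the leading one; both are direct checks on $\ell(\mu)$.
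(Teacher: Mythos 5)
Your proposal is correct and follows exactly the route the paper takes: reuse the expansion from Lemma~\ref{smoke} (and its refinement (\ref{funny2})), observe that under the convention $X^{(0)}=0$ every monomial $X^{(\mu)}$ with $\ell(\mu)<p$ vanishes — killing the corrective term $-X^{(r-p+1)}$ while preserving the leading $(X^{(r/p)})^p$ — and then track the filtration degrees. The paper states this as an immediate consequence of Lemma~\ref{birds}; your write-up simply makes the bookkeeping explicit, and it is accurate.
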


\begin{Remark}\label{rainier}
Like in Remark~\ref{rain}, we will see later that $X_{\rm IV}^{(r)}$ for $p \nmid r$ can be
expressed as a polynomial in 
$\big\{X_{\rm IV}^{(ps)}\:\big|\:
0 < s \leq \lfloor r/p\rfloor\big\}$; see Remark~\ref{rainer}.
\end{Remark}

\section{The shifted current algebra}\label{S:shiftedcurrentalg}

The shifted Yangian is a filtered deformation of the universal
enveloping algebra of a Lie algebra we call 
the {\em shifted current algebra}. In this section, we 
discuss this Lie algebra, describing the centre of its enveloping
algebra. Our notation follows \cite[ch. 2]{BK3}.

\subsection{The shift matrix}\label{shiftsec}
A \emph{shift matrix} is an $n \times n$ array $\sigma = (s_{i,j})_{1\leq i,j\leq n}$ of non-negative integers satisfying 
\begin{eqnarray}\label{e:shiftmatrix}
s_{i,j} + s_{j,k} = s_{i,k}
\end{eqnarray}
whenever $|i - j| + |j-k| = |i - k|$. It follows from the definition that $s_{i,i} = 0$ for $1 \leq i \leq n$,
and so $\sigma$ is entirely determined by the super-diagonal entries $s_{1,2}, s_{2,3},...,s_{n-1,n}$ and the sub-diagonal entries $s_{n,n-1},s_{n-1, n-2},...,s_{2,1}$.
We keep a choice of shift matrix fixed for the remainder of the
section.

\subsection{The shifted current algebra}
The \emph{current algebra} is the Lie algebra $\glnt := \gl_n
\otimes \k[t]$.
We will always denote this Lie algebra by $\g$
and write $U(\g)$ for its enveloping algebra and $S(\g)$ for the symmetric algebra.
When $x \in \gl_n$ and $f \in \k[t]$ we usually abbreviate $x\otimes f = xf \in \g$.
As a vector space, $\g$ is spanned by elements $\left\{ e_{i,j} t^r
  \:|\: 1 \leq i,j \leq n, r \geq 0\right\}$, and the Lie bracket is given by
\begin{eqnarray}\label{e:multloop}
\left[e_{i,j}t^r, e_{k,l} t^s\right] = \delta_{k,j} e_{i,l}t^{r+s} - \delta_{l,i} e_{k,j}t^{r+s}
\end{eqnarray}
where $1\leq i,j,k,l \leq n$ and $r,s\geq 0$.

For our fixed shift matrix $\sigma$, the \emph{shifted current algebra} is $\gs \subseteq \g$ spanned by
\begin{eqnarray}
\label{e:shiftcurrentspans}
\left\{e_{i,j} t^r \:\big|\: 1\leq i,j\leq n, r\geq s_{i,j}\right\}.
\end{eqnarray}

\begin{Lemma}
The shifted current algebra $\gs$ is a Lie subalgebra of $\g$, and it
is generated as a Lie algebra by
\begin{eqnarray}\label{e:shiftcurrentgens}
\left\{e_{i,i}t^r \:\big|\: 1 \leq i \leq n, r\geq 0\right\} \cup \left\{e_{i,i+1}t^r, e_{i+1,i}
t^s\:\big|\:1 \leq i < n, r \geq s_{i,i+1}, s \geq s_{i+1, i} \right\}.
\end{eqnarray}
\end{Lemma}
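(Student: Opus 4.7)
\smallskip

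My plan is to prove the two assertions separately: first that $\gs$ is a Lie subalgebra of $\g$, then that it is generated by the listed set.

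For subalgebra closure, the bracket formula \eqref{e:multloop} shows it suffices to check that $r+s \geq s_{i,l}$ whenever $j = k$, $r \geq s_{i,j}$, and $s \geq s_{j,l}$ (and analogously when $i=l$). So the core fact I would establish first is the \emph{triangle inequality}
\[
s_{i,j} + s_{j,l} \;\geq\; s_{i,l}
\]
for any $1 \leq i,j,l \leq n$, with equality when $j$ lies between $i$ and $l$ in the sense of \eqref{e:shiftmatrix}. This is immediate from \eqref{e:shiftmatrix} in the equality case, and in the remaining cases I would write $\sigma$ in terms of its super-diagonal entries $s_{k,k+1}$ and sub-diagonal entries $s_{k+1,k}$ and observe that any path from $i$ to $l$ through $j$ that first overshoots must come back, so it accumulates the direct contribution to $s_{i,l}$ plus an additional non-negative overshoot/undershoot term. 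A brief case analysis (e.g.\ $i<l<j$ versus $j<i<l$, etc.) closes this out.

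For the generation statement, let $\g'$ be the Lie subalgebra generated by the set \eqref{e:shiftcurrentgens}; clearly $\g' \subseteq \gs$. For the reverse inclusion I would show by induction on $|i-j|$ that every spanning element $e_{i,j}t^r$ with $r \geq s_{i,j}$ lies in $\g'$. The cases $|i-j| = 0$ and $|i-j| = 1$ are handled directly since those spanning vectors appear among the generators. For $|i-j| \geq 2$ with $i < j$, I would use
\[
[e_{i,i+1}t^{a}, \; e_{i+1,j}t^{b}] \;=\; e_{i,j}t^{a+b}
\]
(the second term in \eqref{e:multloop} vanishes since $i \neq j$), choosing $a := s_{i,i+1}$ and $b := r - s_{i,i+1}$. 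Since $i+1$ lies between $i$ and $j$, \eqref{e:shiftmatrix} yields $s_{i,i+1}+s_{i+1,j}=s_{i,j}$, so $b \geq s_{i+1,j}$, and by induction $e_{i+1,j}t^b \in \g'$. The case $i > j$ is symmetric, using $e_{i,i-1}$ instead.

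The main obstacle is the triangle inequality for the shift matrix; once it is in hand, both the closure under bracket and the key factorization $s_{i,j}=s_{i,i+1}+s_{i+1,j}$ used in the induction drop out, and everything else is a short bookkeeping argument using \eqref{e:multloop}.
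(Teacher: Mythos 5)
Your proposal is correct and follows essentially the same route as the paper: closure under the bracket is reduced to an inequality on shift-matrix entries checked by the same case analysis on the relative positions of $i,j,l$ (your uniform statement $s_{i,j}+s_{j,l}\geq s_{i,l}$ is just a clean repackaging of the paper's casewise bound), and generation is proved by the same induction on $|i-j|$ via brackets with the simple root vectors $e_{i,i\pm 1}t^{s_{i,i\pm1}}$. Your write-up actually supplies the degree bookkeeping ($b=r-s_{i,i+1}\geq s_{i+1,j}$) that the paper leaves implicit.
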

\begin{proof}
First we show that the span of \eqref{e:shiftcurrentspans} is closed under the bracket.
Let $1\leq i,j,k,l \leq n$, $r\geq s_{i,j}$ and $s \geq s_{k,l}$. 
By \eqref{e:multloop} we only need to check that $j=k$ implies $r + s \geq s_{i,l}$.
When $i < j < l$ or $l < j < i$ this follows from \eqref{e:shiftmatrix} so it remains to check the case $l < i < j$. Now we have
$r + s \geq s \geq s_{j,l} = s_{j,i} + s_{i,l} \geq s_{i,l}$. 

Now take $i < j$ and $r \geq s_{i,j}$. The fact that $e_{i,j}t^r$ lies in the algebra generated by \eqref{e:shiftcurrentgens}
can be proven by induction on $j - i$. A similar argument treats the
case that $i > j$, completing the proof.
\end{proof}

The adjoint action of $\gs$ on itself extends uniquely to actions of $\gs$
on $\Us$ and $\Ss$ by derivations.
The invariant subalgebras are denoted $\Us^{\gs}$ and $\Ss^{\gs}$, and the equality
$\Zs = \Us^{\gs}$ follows from general principles; e.g. see
\cite[2.4.9(i)]{Di}.

There is one obvious family of central elements in $\Us$.
For any $r \in \Zg$, we 
set
\begin{equation}
z_r := e_{1,1}t^r + \cdots + e_{n,n}t^r \in \gs.
\end{equation}
Then the set $\{z_r \:|\: r\geq 0\}$ forms a basis for the centre
$\z(\gs)$ of $\gs$, and
$\k[z_r\:|\:r \geq 0]$ is a subalgebra of $\Zs$.
The elements $z_r$ also define symmetric invariants in $\Ss^{\gs}$.

\subsection{Symmetric invariants}\label{sinv}
The current Lie algebra $\g$ has an obvious 
grading with $e_{i,j}t^r$ in degree $r$, and $\gs$ is a graded
subalgebra.
There is also a filtration 
\begin{equation}\label{filt1}
\Us = \bigcup_{r\geq 0}
\F_r \Us
\end{equation}
of the universal enveloping algebra $\Us$,
which is defined 
by placing $e_{i,j}t^r$ in degree $r+1$, i.e. $\F_r \Us$ is the span
of all monomials of the form $e_{i_1,j_1}t^{r_1}\cdots
e_{i_m,j_m}t^{r_m}$ 
with total degree $(r_1+1)+\cdots+(r_m+1) \leq r$.
The associated graded
algebra $\gr \Us$ is isomorphic 
(both as a graded algebra
and as a graded $\gs$-module) to $\Ss$ graded so that $e_{i,j}t^r$
is in degree $r+1$. 
We get induced an
inclusion
\begin{eqnarray}\label{grZinc}
\gr \Zs \subseteq \Ss^{\gs}.
\end{eqnarray}
In the remainder of the section, we are going to use this to compute
$\Zs$, revealing in particular that equality holds
in (\ref{grZinc}).
First, we must describe $\Ss^{\gs}$ explicitly.

\begin{Lemma}\label{symmetricinvariants}
The invariant algebra $\Ss^{\gs}$ is generated by 
$\{z_r \:|\: r\geq 0\}$
together with
$(\gs)^p := \{x^p \:|\: x\in \gs\} \subset \Ss$.
In fact, $\Ss^{\gs}$ is freely generated by
\begin{equation}\label{cgen}
\{z_r\:|\:r \geq 0\} \cup \big\{(e_{i,j}t^r)^p\:\big|\:
1 \leq i,j \leq n\text{ with }(i,j)\neq(1,1), r \geq s_{i,j}
\big\}.
\end{equation}
\end{Lemma}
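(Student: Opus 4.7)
My plan is threefold: verify invariance of the listed generators, establish algebraic independence together with the free-module structure over the generated subalgebra, and prove that these generators exhaust the invariants.

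First, I would verify invariance. Each $z_r$ is central in $\gs$, so it is automatically $\ad$-invariant on $\Ss$. For any $x, y \in \gs$, the Leibniz rule together with $p = 0$ in $\k$ gives $\ad(y)(x^p) = p x^{p-1}[y,x] = 0$, showing that $(\gs)^p \subseteq \Ss^{\gs}$. Together these show the subalgebra $B$ generated by (\ref{cgen}) lies inside $\Ss^{\gs}$.

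Next, I would establish algebraic independence and identify the module structure. Since $s_{1,1} = 0$ and $z_r = e_{1,1}t^r + \sum_{i \geq 2} e_{i,i}t^r$, the set $\{z_r : r \geq 0\} \cup \{e_{i,j}t^r : (i,j) \neq (1,1), r \geq s_{i,j}\}$ serves as an alternative polynomial generating set for $\Ss$. Relative to this presentation, the set (\ref{cgen}) consists of a subset of polynomial generators (the $z_r$'s) together with the $p$-th powers of the remaining polynomial generators; since Frobenius is injective on a polynomial ring, they are algebraically independent, and the subalgebra $B$ factorizes as $\k[\{z_r\}] \otimes_\k \k[\{(e_{i,j}t^r)^p : (i,j) \neq (1,1)\}]$. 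Moreover, $\Ss$ is a free $B$-module with basis the Frobenius monomials $m_\alpha := \prod_{(i,j) \neq (1,1),\, r \geq s_{i,j}} (e_{i,j}t^r)^{\alpha_{i,j,r}}$, where each $\alpha_{i,j,r} \in \{0, 1, \dots, p-1\}$ and almost all are zero.

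The main obstacle will be showing $\Ss^{\gs} \subseteq B$. Since $B \subseteq \Ss^{\gs}$, the $\gs$-action on $\Ss$ is $B$-linear, so the claim reduces to showing that no nontrivial $B$-linear combination of $\{m_\alpha\}_{\alpha \neq 0}$ is $\gs$-invariant. My plan is to assume such an invariant $u = \sum_\alpha b_\alpha m_\alpha$ exists with some $b_\beta \neq 0$ for $\beta \neq 0$, pick $\beta$ maximal in a suitable lexicographic order on exponent sequences, and then exhibit a root derivation $\ad(e_{k,l}t^s)$ with $s \geq s_{k,l}$ whose action on $m_\beta$ produces, after reducing back to the Frobenius basis, a nonzero contribution on a basis element that cannot be matched by the action on any other $m_{\alpha'}$ with $\alpha' \neq \beta$. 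This forces $b_\beta = 0$, yielding a contradiction. The principal technical difficulty is the compatibility with the shift matrix $\sigma$ -- ensuring that a suitable $(k,l,s)$ with $s \geq s_{k,l}$ is always available to target the extremal factor of $m_\beta$ -- and controlling the absorption of $p$-th powers into $B$ when an exponent reaches $p$; this will likely require an auxiliary induction on total $t$-degree or a refined weight argument.
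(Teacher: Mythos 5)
Your overall architecture is exactly the paper's: check invariance of the generators, observe that $\Ss$ is a free module over the subalgebra $B$ they generate with basis the Frobenius monomials with exponents $<p$, and then kill the coefficients of the non-trivial basis monomials in a putative invariant by applying root derivations (using that these are $B$-linear because $B\subseteq \Ss^{\gs}$). However, the crucial third step is left as a plan, and the two difficulties you flag at the end --- availability of a derivation $\ad(e_{k,l}t^s)$ with $s\geq s_{k,l}$ compatible with $\sigma$, and the absorption of $p$-th powers when an exponent reaches $p$ --- are precisely where the missing ideas lie; your suggestion that this ``will likely require an auxiliary induction on total $t$-degree or a refined weight argument'' is not a proof, and in fact neither an induction nor a lexicographic maximality argument is needed.

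The paper resolves both difficulties with one device: given $f=\sum_\alpha c_\alpha m_\alpha$ invariant and a fixed nonzero exponent function $\beta$, choose $s$ \emph{strictly larger} than every $t$-exponent occurring in the support of $f$. If $m_\beta$ contains an off-diagonal factor $e_{i,j}t^r$ ($i\neq j$), apply $\ad(e_{i,i}t^s)$ --- a \emph{diagonal} generator, which lies in $\gs$ for every $s\geq 0$ since $s_{i,i}=0$, so the shift matrix imposes no constraint. If $m_\beta$ is purely diagonal, it contains some $e_{j,j}t^r$ with $j\neq 1$; pick any $i\neq j$ and apply $\ad(e_{i,j}t^s)$, which lies in $\gs$ because $s$ is large. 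In either case the Leibniz rule replaces one factor of some $m_\alpha$ by a bracket of the form $\pm e_{k,l}t^{r'+s}$, and since $r'+s$ exceeds all exponents in the support of $f$, this new factor is a fresh polynomial generator appearing to the first power: no exponent ever reaches $p$, the resulting monomials are again distinct elements of the free $B$-basis, and the only monomial producing the target basis element $(e_{i,j}t^{r})^{\beta(i,j,r)-1}\,e_{i,j}t^{r+s}\prod(\cdots)$ is $m_\beta$ itself. Its coefficient in $\ad(\cdot)(f)$ is $c_\beta\,\beta(i,j,r)$ with $0<\beta(i,j,r)<p$ invertible in $\k$, forcing $c_\beta=0$ directly. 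Until you supply an argument of this kind (or a working substitute), the containment $\Ss^{\gs}\subseteq B$ is not established.
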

\begin{proof}
Since we are in characteristic $p > 0$, we have that $(\gs)^p \subset \Ss^{\gs}$.
Let $I(\gs)$ be the subalgebra of $\Ss^{\gs}$ generated by $(\gs)^p$ and $\{z_r
\:|\: r \geq 0\}$. Let 
\begin{equation*}
B := \left\{(i,j,r) \:|\: 1 \leq i,j \leq n\text{ with }(i,j) \neq (1,1), r\geq s_{i,j}\right\}
\end{equation*}
for short.
Since the elements $\{z_r\:|\:r \geq 0\}\cup\big\{e_{i,j} t^r \:\big|\: r \geq 0, (i,j,r) \in B
\big\}$ give a basis for $\gs$, it follows that 
\begin{align*}
\Ss &= \k[z_r\:|\:r \geq 0]\left[e_{i,j}t^r \:\big|\: (i,j,r)\in
  B\right],\\
I(\gs) &= \k[z_r\:|\:r \geq 0]\left[ (e_{i,j}t^r)^p\:\big|\:(i,j,r) \in B\right],
\end{align*}
with both being free polynomial algebras.
Hence, $\Ss$ is free as an $I(\gs)$-module with basis 
$\left\{\prod_{(i,j,r) \in B} (e_{i,j} t^r)^{\omega(i,j,r)} \:\Big|\: \omega \in
  \Omega\right\}$, where
$$
\Omega := 
\left\{
\omega:B \rightarrow \N\:\Bigg|\:
\begin{array}{l}
0 \leq \omega(i,j,r) < p\text{ for all }(i,j,r)\in B,\\
\text{$\omega(i,j,r)=0$ for all but finitely many }(i,j,r)\in
B
\end{array}
\right\}.
$$
To complete the proof of the lemma, we must show that
$\Ss^{\gs} \subseteq I(\gs)$.
To do this, take $f \in \Ss^{\gs}$ and write it as
$$
f = \sum_{\omega\in\Omega} 
c_\omega \prod_{(i,j,r) \in B}(e_{i,j} t^r)^{\omega(i,j,r)}
$$
for $c_\omega \in I(\gs)$, all but finitely many of which are
zero. 
Also fix  a non-zero function $\omega \in \Omega$.
We must show that $c_\omega = 0$.

Suppose first that $\omega(i,j,r) > 0$ for some $(i,j,r) \in
B$ with $i \neq j$.
Choose $s \in \N$ that it is bigger than all
$r'$ such that $\omega(i',j',r') > 0$ for $(i',j',r')\in
B$.
Using the
Leibniz rule we see that 
\begin{multline*}
\ad(e_{i,i} t^s) (f) =
\sum_{\omega'\in\Omega} 
c_{\omega'} 
\sum_{\substack{(i',j',r')\in B \\ \omega(i',j',r')> 0}}
\omega(i',j',r')
(e_{i',j'} t^{r'})^{\omega(i',j',r')-1}
\left[e_{i,i} t^s, e_{i',j'} t^{r'}\right]\\\times
\prod_{\substack{(i'',j'',r'') \in B \\ (i'',j'',r'')\neq (i',j',r')}}
(e_{i'',j''} t^{r''})^{\omega(i'',j'',r'')}.
\end{multline*}
It is crucial for this that the coefficients $c_{\omega'}$ belong to
$I(\gs) \subseteq \Ss^{\gs}$.
Thanks to the choice of $s$,
the coefficient of
$$
(e_{i, j} t^{r})^{\omega(i,j,r)-1}e_{i, j} t^{r + s}
\prod_{\substack{(i'',j'',r'')\in B \\ (i'',j'',r'') \neq (i,j,r)}}
  (e_{i'',j''}t^{r''})^{\omega(i'',j'',k'')}
$$
in this expression is $c_\omega \omega(i,j,r)$. 
But also it must be zero since $f \in \Ss^{\gs}$.
Since $\omega(i,j,r)$ is non-zero in $\k$,
we deduce that $c_\omega = 0$ as required.

A very similar line of reasoning
treats the case that $\omega(j,j,r) > 0$ for some $(j,j,r)\in B$.
This time,
one picks $i \neq j$ (possible as $n > 1$)
and considers the coefficient of
$$(e_{j, j} t^{r})^{\omega(j,j,r)-1}e_{i, j} t^{r + s}
\prod_{\substack{(i'',j'',r'')\in B \\ (i'',j'',r'') \neq (j,j,r)}} 
(e_{i'',j''}t^{r''})^{\omega(i'',j'',r'')}$$
in
$\ad(e_{i, j} t^s)(f)$ for $s$ as before.
\end{proof}

\subsection{The restricted structure}
If $\r$ is any Lie algebra over $\k$ then a \emph{restricted structure on $\r$} is given by defining a \emph{$p$-map} $x \mapsto x^{[p]}$ on $\r$
such that the map
\begin{align*}
\xi : \r &\rightarrow U(\r),&
 x &\mapsto x^p - x^{[p]}
 \end{align*}
satisfies two properties: (i) $\xi(\r)$ lies in the centre of $U(\r)$; (ii) $\xi$ is $p$-semilinear. We remind the reader that $p$-semilinearity 
means that $\xi(\lambda x) = \lambda^p \xi(x)$ and $\xi(x + y) = \xi(x) + \xi(y)$, for all $x, y\in \r$, $\lambda \in \k$. It is readily
seen that such a map is determined by its effect on a basis.
\begin{Lemma}
The current algebra $\g$ is a restricted Lie algebra with $p$-map defined on the basis by the rule
$(e_{i,j}t^r)^{[p]} := \delta_{i,j} e_{i,j} t^{rp}.$
Moreover, $\gs$ is a restricted Lie subalgebra.
\end{Lemma}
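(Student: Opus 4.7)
The plan is to apply the existence theorem for restricted structures (essentially Jacobson's theorem, see e.g.\ Strade--Farnsteiner): to give $\g$ a $p$-mapping it suffices to exhibit, for each element $x_\alpha$ of a basis, an element $x_\alpha^{[p]} \in \g$ satisfying $\ad(x_\alpha)^p = \ad(x_\alpha^{[p]})$ in $\g$. So the first step is to verify this ``ad condition'' on the basis $\{e_{i,j}t^r\}$. Since $\g$ embeds in the associative algebra $\Mat_n(\k[t])$, Lemma~\ref{hop} gives that $(\ad e_{i,j}t^r)^p = \ad((e_{i,j}t^r)^p)$, where the right hand $p$-power is associative. In $\Mat_n(\k[t])$ we have $(e_{i,j}t^r)^p = (e_{i,j})^p t^{rp}$, and $(e_{i,j})^p$ equals $e_{i,i}$ if $i=j$ and $0$ if $i\neq j$, so $(e_{i,j}t^r)^p = \delta_{i,j} e_{i,j} t^{rp}$ as desired.

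Once the basis-level condition is verified, the existence theorem produces a unique $p$-map $x \mapsto x^{[p]}$ on $\g$ extending the stated rule, and this $p$-map automatically satisfies properties (i) and (ii) defining the map $\xi$. So $\g$ is a restricted Lie algebra.

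Finally, to show $\gs$ is a restricted Lie subalgebra I need to verify that $x^{[p]} \in \gs$ whenever $x \in \gs$. Recall Jacobson's formula
\[
(x+y)^{[p]} = x^{[p]} + y^{[p]} + \sum_{i=1}^{p-1} \frac{s_i(x,y)}{i},
\]
where each $s_i(x,y)$ is a Lie polynomial in $x$ and $y$; together with $(\lambda x)^{[p]} = \lambda^p x^{[p]}$, this reduces the problem to two facts. First, on each basis element $e_{i,j} t^r$ of $\gs$ (so $r \geq s_{i,j}$), the $p$-map outputs $\delta_{i,j} e_{i,j} t^{rp}$, which lies in $\gs$ since the only nonzero case is $i=j$ with $rp \geq 0 = s_{i,i}$. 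Second, any iterated Lie bracket of elements of $\gs$ stays in $\gs$, because $\gs$ is a Lie subalgebra of $\g$. Expanding $x^{[p]}$ for $x = \sum_\alpha c_\alpha x_\alpha$ with $x_\alpha$ basis elements of $\gs$ via iterated application of Jacobson's formula yields a sum of basis $p$-powers and iterated brackets, all in $\gs$.

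The argument is essentially routine; the only mild subtlety is remembering that closure under $[p]$ cannot be checked on basis elements alone but follows from the combination of the basis calculation, the $p$-semilinearity, Jacobson's formula, and the fact that $\gs$ is already closed under Lie brackets.
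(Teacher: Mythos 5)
Your proof is correct, but it is organized differently from the paper's. For the existence of the $p$-map you verify the hypothesis of Jacobson's theorem directly, by identifying $\g$ with the associative algebra $\Mat_n(\k[t])$ and using Lemma~\ref{hop} there to get $(\ad\, e_{i,j}t^r)^p = \ad\big(\delta_{i,j}e_{i,j}t^{rp}\big)$; the paper instead isolates the general statement that $\r \otimes C$ is restricted with $(x\otimes a)^{[p]} := x^{[p]}\otimes a^p$ whenever $\r$ is restricted and $C$ is commutative, and then specializes to $\r = \gl_n$, $C = \k[t]$. The two routes cost about the same here; yours is more concrete and self-contained for $\gl_n$, while the paper's packages the computation into a reusable base-change principle (the same Lemma~\ref{hop} is doing the work in both). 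For the closure of $\gs$ under the $p$-map you argue via Jacobson's formula together with $p$-semilinearity and the fact that $\gs$ is a Lie subalgebra, checking the basis elements $e_{i,j}t^r$ with $r \geq s_{i,j}$ by hand; the paper compresses this to the observation that $\xi(x) = x^p - x^{[p]}$ maps $\gs$ into $\Us$, which combined with $x^p \in \Us$ forces $x^{[p]} \in \Us \cap \g = \gs$. Your version spells out exactly the point the paper leaves implicit (that closure cannot be checked on a basis alone without invoking additivity of the $p$-map modulo brackets), so it is, if anything, the more careful of the two. The only step you leave to folklore is that a $p$-map in Jacobson's axiomatic sense automatically yields the centrality and $p$-semilinearity of $\xi$ required by the paper's definition of a restricted structure; this is standard and follows again from Lemma~\ref{hop} applied in $U(\g)$.
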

\begin{proof}
Using Lemma~\ref{hop},
it is easy to see that when $\r$ is a restricted Lie algebra with $p$-map $x\mapsto x^{[p]}$
and $C$ is any commutative $\k$-algebra,
 the tensor product $\r \otimes C$ acquires the structure
of a restricted Lie algebra over $\k$ with multiplication and restricted structure given by
$[x \otimes a, y\otimes b] := [x,y] \otimes ab$ and $(x\otimes a)^{[p]} := x^{[p]} \otimes a^p$
for $x,y\in \r$ and $a,b\in C$. Since $\gl_n$ is restricted with $p$-map $e_{i,j}^{[p]} = \delta_{i,j}e_{i,j}$,
the first claim of the lemma follows. Clearly $x \mapsto x^p - x^{[p]}$ sends $\gs$ to $\Us$,
and so $\gs$ is a restricted subalgebra.
\end{proof}

\subsection{\boldmath The centre of $\Us$}\label{loopcentresec}
Using the restricted structure, we can define the
\emph{$p$-centre} $\Zps$ of $\Us$ to be the subalgebra of $\Zs$
generated by $x^p - x^{[p]}$ for all $x \in \gs$.
Since the $p$-map is $p$-semilinear, we have that
\begin{equation}\label{zdf}
\Zps = \k \left[\big(e_{i,j}t^r\big)^p - \delta_{i,j} e_{i,j}t^{rp} \:\Big|\: 1\leq
  i,j \leq n, r\geq s_{i,j}\right]
\end{equation}
as a free polynomial algebra.

\begin{Theorem}\label{loopcentre}
The centre $\Zs$ of $\Us$ is generated by $\{z_r\:|\:r \geq 0\}$ and
$\Zps$.
In fact, $\Zs$ is freely generated by
\begin{equation}\label{zgen}
\{z_r\:|\:r \geq 0\} \cup \big\{
(e_{i,j} t^r)^p - \delta_{i,j} e_{i,j} t^{rp}
\:\big|\:1 \leq i,j \leq n\text{ with }(i,j) \neq (1,1), r \geq s_{i,j}
\big\}.
\end{equation}
\end{Theorem}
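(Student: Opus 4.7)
The plan is to pass to the associated graded algebra and match the generators against the explicit description of $S(\g_\sigma)^{\g_\sigma}$ provided by Lemma~\ref{symmetricinvariants}. Let $Z'$ denote the subalgebra of $\Us$ generated by the set in (\ref{zgen}). The inclusion $Z' \subseteq \Zs$ is automatic: the elements $z_r$ are central even in $\g_\sigma$ itself, while the elements $(e_{i,j}t^r)^p - \delta_{i,j} e_{i,j} t^{rp}$ are central by the defining property of the restricted structure. So the real content is to show $Z' = \Zs$ and that the generators (\ref{zgen}) are algebraically independent.

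The key filtration-degree computation is the following. Under the filtration (\ref{filt1}), $e_{i,j}t^r$ sits in degree $r+1$, so $(e_{i,j}t^r)^p$ has filtration degree $p(r+1)=pr+p$. When $i=j$, the subtracted element $e_{i,j} t^{rp}$ has filtration degree $rp+1$, which is strictly less than $pr+p$. Consequently, the image of $(e_{i,j}t^r)^p - \delta_{i,j} e_{i,j} t^{rp}$ in $\gr_{p(r+1)} \Us = S(\g_\sigma)_{p(r+1)}$ is simply $(e_{i,j}t^r)^p$. Likewise, the image of $z_r$ in $\gr_{r+1}\Us$ is $z_r$ itself viewed in $\Ss$. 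Thus the images in $\gr \Us$ of the generators listed in (\ref{zgen}) coincide exactly with the free generators of $\Ss^{\g_\sigma}$ exhibited in (\ref{cgen}).

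Algebraic independence of (\ref{zgen}) now follows at once: any polynomial relation in $\Us$ would pass to a polynomial relation among the leading terms in $\gr\Us$, contradicting the freeness from Lemma~\ref{symmetricinvariants}. For the equality $Z' = \Zs$, I combine the chain of inclusions
\[
\gr Z' \subseteq \gr \Zs \subseteq \Ss^{\g_\sigma}
\]
(the right-hand inclusion being (\ref{grZinc})) with the observation above that $\gr Z'$ contains the full set of free generators (\ref{cgen}) of $\Ss^{\g_\sigma}$. Thus all three terms coincide. A standard induction on filtration degree then upgrades the equality $\gr Z' = \gr \Zs$ to $Z' = \Zs$: given any $z \in \F_r \Zs$, one subtracts off an element of $\F_r Z'$ with matching image in $(\gr \Zs)_r$ to drop into $\F_{r-1} \Zs$ and iterate, the base case $r=0$ being trivial.

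The only mildly subtle point is the filtration-degree bookkeeping for the $p$-center generators, and in particular verifying that $rp+1 < pr+p$ for all relevant $r \geq 0$, which simply uses $p \geq 2$. Everything else is formal once Lemma~\ref{symmetricinvariants} is in hand, so I expect no serious obstacle.
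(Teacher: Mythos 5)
Your proposal is correct and follows essentially the same route as the paper's own proof: identify the generators (\ref{zgen}) as lifts (with respect to the filtration (\ref{filt1})) of the free generators (\ref{cgen}) of $\Ss^{\gs}$ from Lemma~\ref{symmetricinvariants}, then squeeze via $\gr Z' \subseteq \gr \Zs \subseteq \Ss^{\gs}$ to conclude equality and algebraic independence. The only difference is cosmetic — you spell out the filtered-degree bookkeeping and the induction upgrading $\gr Z' = \gr \Zs$ to $Z' = \Zs$, which the paper leaves implicit.
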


\begin{proof}
It suffices to prove the second statement. 
Let $Z$ be the subalgebra of $\Zs$ generated by 
(\ref{zgen}).
For $r \geq 0$ we have that
$z_r \in \F_{r+1} \Us$ and
$$
\gr_{r+1} z_r = z_r \in \Ss.
$$
For $1\leq i,j \leq n$ and $r \geq s_{i,j}$ we have that
$(e_{i,j}t^r)^p - \delta_{i,j} e_{i,j} t^{rp}\in \F_{rp+p} \Us$
and
$$
\gr_{rp+p}
\left[(e_{i,j}t^r)^p - \delta_{i,j} e_{i,j} t^{rp}\right] = 
(e_{i,j}t^r)^p \in \Ss.
$$
So the elements (\ref{zgen}) are lifts of the algebraically independent generators of $\Ss^{\gs}$ from
(\ref{cgen}).
It follows that the elements (\ref{zgen}) are themselves algebraically
independent, and moreover 
$\Ss^{\gs} \subseteq \gr Z$.
Thanks to \eqref{grZinc}, we also have $\gr Z \subseteq \gr \Zs
\subseteq \Ss^{\gs}$,
so equality must hold throughout:
$\gr Z = \gr \Zs
= \Ss^{\gs}.$
This implies
$Z = \Zs$.
\end{proof}

\begin{Remark}
If $\k$ is algebraically closed of characteristic $p > 0$ and $\g = \Lie(G)$ for a reductive
algebraic $\k$-group $G$ 
satisfying standard hypotheses,
then it is well known that the centre of $U(\g)$ is generated by the
$p$-centre and $U(\g)^G$. 
Theorem~\ref{loopcentre} can be seen as an analogue of this
classical result in the context of loop algebras.
\end{Remark}

\section{Modular Yangians and shifted Yangians}

In this section, we define and study the Yangian $Y_n$ and its shifted subalgebra
$Y_n(\sigma)$ in positive characteristic. In particular, we prove that the RTT
presentation for $Y_n$ from the introduction is equivalent to a
slightly modified version of the Drinfeld presentation. 

\subsection{The RTT generators}
We define the Yangian $Y_n$ over $\k$ as in the introduction. So it has generators
$\big\{T_{i,j}^{(r)}\:\big|\: 1 \leq i,j \leq n, r > 0\big\}$
subject just to the relations
(\ref{rttnew}). Recall also that $T_{i,j}^{(0)} := \delta_{i,j}$.
By the same proof as \cite[Proposition~1.2]{MNO}, the following give
an equivalent set of defining relations:
\begin{eqnarray}\label{e:RTTrelations}
\left[T_{i,j}^{(r)}, T_{k,l}^{(s)}\right] = \sum_{t=0}^{\min(r,s) - 1}
\left( T_{k, j}^{(t)} T_{i,l}^{(r+s-1-t)} - 
T_{k,j}^{(r+s-1-t)} T_{i,l}^{(t)}\right)
\end{eqnarray}
for $1\leq i,j,k,l \leq n$ and $r,s > 0$.
Using this and induction on $r+s$, it is easy to see in particular
that
\begin{equation}\label{Theycommute}
T_{i,j}^{(r)} T_{i,j}^{(s)} = T_{i,j}^{(s)} T_{i,j}^{(r)}
\end{equation}
for all $r,s \geq 0$.

We often put the generators $T_{i,j}^{(r)}$ for all $r \geq 0$
together to form the power
series
$T_{i,j}(u)$ as in (\ref{amazing}). 
Then these power series for all $1 \leq i,j \leq n$ can be collected together into a single matrix
$T(u) := (T_{i,j}(u))_{1 \leq i,j \leq n} \in M_n(Y_n[[u^{-1}]])$.
Using these matrices,
the relations can be expressed
in an extremely compact form, known as the RTT presentation. Since this presentation will play no role in our work,
we do not bother going through the details, rather we refer the reader
to \cite[Proposition~1.8]{MNO} where the calculations apply equally well over $\k$ as they do over $\C$.

It will not play a significant role in this paper, but we should
mention that $Y_n$ is a Hopf algebra. Its comultiplication $\Delta$
and antipode $S$ are
given by
\begin{align}\label{comult}
\Delta(T_{i,j}(u)) &= \sum_{k=1}^n 
T_{i,k}(u) \otimes T_{k,j}(u),
&
S(T_{i,j}(u)) &= \widetilde{T}_{i,j}(u),
\end{align}
where $\widetilde{T}_{i,j}(u)$ is the $(i,j)$-entry of the matrix $T(u)^{-1}$.
The counit sends $T_{i,j}(u) \mapsto \delta_{i,j}$.

\subsection{Filtrations and the PBW theorem}
There are two distinguished filtrations on $Y_n$. 
The first one, called the {\em canonical filtration} in
\cite[\textsection 2]{BK1}, 
places $T_{i,j}^{(r)}$ in degree $r$. The associated graded algebra is
commutative.
Since this filtration will not play a significant role
in this article, we will not reserve any special notation for it. In
fact, will never mention it again after the proof of the following
fundamental {\em
  PBW theorem}.

\begin{Theorem}\label{pbw}
Ordered monomials
in the generators $\big\{T_{i,j}^{(r)}\:\big|\:1 \leq i,j \leq n, r >
0\big\}$ taken in any fixed order give a basis for $Y_n$.
\end{Theorem}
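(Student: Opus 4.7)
The plan is to use the \emph{canonical filtration} on $Y_n$ in which $T_{i,j}^{(r)}$ lies in degree $r$, as in \cite[\textsection 2]{BK1}. First I would verify from (\ref{e:RTTrelations}) that $\big[T_{i,j}^{(r)}, T_{k,l}^{(s)}\big] \in \F_{r+s-1} Y_n$, since each term on the right-hand side has total degree at most $r+s-1$. Hence $\gr Y_n$ is commutative, and the assignment $t_{i,j}^{(r)} \mapsto \gr_r T_{i,j}^{(r)}$ extends to a surjective graded algebra homomorphism
\[
\pi:\k\big[t_{i,j}^{(r)}\:\big|\:1 \leq i,j \leq n, r > 0\big] \twoheadrightarrow \gr Y_n.
\]

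Spanning is then immediate from surjectivity of $\pi$: any element $x \in \F_r Y_n$ agrees modulo $\F_{r-1} Y_n$ with a polynomial in the $T_{i,j}^{(s)}$ written in the prescribed order, and induction on $r$ with base case $\F_0 Y_n = \k$ expresses $x$ as a $\k$-linear combination of ordered monomials.

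For linear independence I would descend to an integral form. Let $Y_n^\Z$ denote the associative $\Z$-algebra on the same generators subject to (\ref{rttnew}); since those relations have integer coefficients, base change yields $Y_n \cong Y_n^\Z \otimes_\Z \k$, while $Y_n^\Z \otimes_\Z \mathbb{Q}$ is the classical rational Yangian. The preceding argument is characteristic-free, so ordered monomials span $Y_n^\Z$ over $\Z$; the classical PBW theorem \cite[Theorem~1.4]{MNO} in turn asserts that their images are $\mathbb{Q}$-linearly independent in $Y_n^\Z \otimes_\Z \mathbb{Q}$. These two facts together force $Y_n^\Z$ to be a free $\Z$-module on the ordered monomials, and tensoring with $\k$ finishes the proof.

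The main obstacle is the injectivity of $\pi$ — equivalently, ruling out hidden $\k$-linear relations among ordered monomials peculiar to characteristic $p$. The descent to $Y_n^\Z$ sidesteps this cleanly, provided one checks that the spanning argument really works over $\Z$, so that rational PBW upgrades from $\Q$-flatness to $\Z$-freeness; but this requires only that the identities used to reduce arbitrary monomials to ordered ones involve no denominators, which is evident from (\ref{e:RTTrelations}).
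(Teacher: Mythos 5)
Your proof is correct, but the route to linear independence is genuinely different from the paper's. The spanning step coincides: both arguments induct on degree in the canonical filtration, using that the right-hand side of (\ref{e:RTTrelations}) has strictly smaller canonical degree than the left. For independence, the paper extends scalars to an algebraic closure and then reruns the characteristic-zero proof of \cite[Theorem 3.1]{BK1} verbatim in characteristic $p$ (that proof is representation-theoretic, built on evaluation-type homomorphisms, which is why the passage to $\overline{\k}$ is needed). You instead descend to the integral form $Y_n^{\Z}$: since presentations commute with base change and the straightening relations (\ref{e:RTTrelations}) are denominator-free, ordered monomials span $Y_n^{\Z}$ over $\Z$; their images in $Y_n^{\Z}\otimes_{\Z}\mathbb{Q}$ are independent by characteristic-zero PBW, so $Y_n^{\Z}$ is $\Z$-free on them and tensoring with $\k$ finishes. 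This imports the classical PBW theorem as a black box rather than re-verifying its proof in characteristic $p$, and it needs no field extension; the price is the (correctly flagged, and easily checked) integrality of the spanning argument. One cosmetic point: \cite{MNO} works over $\C$, so the $\mathbb{Q}$-linear independence you invoke requires one further base change $Y_n^{\Z}\otimes_{\Z}\mathbb{Q}\hookrightarrow Y_n^{\Z}\otimes_{\Z}\C$, which is immediate but worth a clause.
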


\begin{proof}
It is easy to see that these monomials span, e.g. one can argue by induction on degree in
the canonical filtration. To show that they are linearly independent,
we may assume that $\k$ is algebraically closed.
Then the proof given in characteristic zero in \cite[Theorem 3.1]{BK1} works just as well in
positive characteristic.
\end{proof}

The second filtration on $Y_n$, called \emph{the
loop filtration} in \cite[\textsection 2]{BK1}, will be ubiquitous. We denote it by
\begin{equation}\label{filt2}
Y_n = \bigcup_{r \geq 0} \F_r Y_n.
\end{equation}
It is defined by placing $T_{i,j}^{(r)}$ in degree $r-1$, i.e.
$\F_r Y_n$ is the span of all monomials of the form
$T_{i_1,j_1}^{(r_1)} \cdots T_{i_m,j_m}^{(r_m)}$ with $
(r_1-1)+\cdots+(r_m-1) \leq r$. 
We warn the reader that the notation $\F_r Y_n$ is often used
elsewhere in the literature for
the canonical filtration. Also (\ref{filt2}) should
not be confused with the filtration on $U(\g)$ from (\ref{filt1}); the
latter will never be used again.

To describe the associated graded algebra $\gr Y_n$, 
recall that 
$\g = \gl_n[t]$ is generated by $\{e_{i,j}t^r\:|\:1 \leq i,j \leq n, r
\geq 0\}$ subject to relations \eqref{e:multloop},
and notice by the definition that $\gr Y_n$ is generated by elements 
$\big\{\gr_{r-1} T_{i,j}^{(r)}\:\big|\:
1\leq i,j \leq n, r> 0\big\}$.

\begin{Lemma}\label{L:loopPBW}
There is an isomorphism
$\chi:U(\g) \isoto \gr Y_n$ sending $e_{i,j}t^{r} \mapsto \gr_{r} T_{i,j}^{(r+1)}$ 
for each $1 \leq i,j \leq n$ and $r \geq 0$.
\end{Lemma}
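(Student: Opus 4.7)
The plan is to proceed in three stages: first construct $\chi$ as an algebra homomorphism from $U(\g)$, then observe surjectivity from the generating set, then prove injectivity using the PBW theorem for $Y_n$ together with the classical PBW theorem for $U(\g)$.

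For the construction, I would check that the symbols $\bar{T}_{i,j}^{(r+1)} := \gr_r T_{i,j}^{(r+1)} \in (\gr Y_n)_r$ satisfy the current algebra relations \eqref{e:multloop} in $\gr Y_n$, which by the universal property of $U(\g)$ will extend uniquely to an algebra map $\chi$. Shifting $r \mapsto r+1$, $s \mapsto s+1$ in the commutator formula \eqref{e:RTTrelations} gives
\begin{equation*}
\bigl[T_{i,j}^{(r+1)},T_{k,l}^{(s+1)}\bigr] = \sum_{t=0}^{\min(r,s)} \bigl(T_{k,j}^{(t)} T_{i,l}^{(r+s+1-t)} - T_{k,j}^{(r+s+1-t)} T_{i,l}^{(t)}\bigr).
\end{equation*}
With respect to the loop filtration, the left hand side lies in $\F_{r+s}Y_n$. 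On the right, every term with $t \geq 1$ lies in $\F_{r+s-1}Y_n$ (since the loop degree of the product is $(t-1)+(r+s-t) = r+s-1$), while the $t=0$ term contributes $\delta_{k,j} T_{i,l}^{(r+s+1)} - \delta_{i,l} T_{k,j}^{(r+s+1)}$, of loop degree $r+s$. Passing to $\gr_{r+s}$ and using that $\gr_{r+s}[x,y] = [\gr_r x, \gr_s y]$ for $x \in \F_r Y_n, y \in \F_s Y_n$, we obtain exactly
\begin{equation*}
\bigl[\bar{T}_{i,j}^{(r+1)}, \bar{T}_{k,l}^{(s+1)}\bigr] = \delta_{k,j}\,\bar{T}_{i,l}^{(r+s+1)} - \delta_{i,l}\,\bar{T}_{k,j}^{(r+s+1)},
\end{equation*}
which matches \eqref{e:multloop} under $e_{i,j}t^r \mapsto \bar{T}_{i,j}^{(r+1)}$. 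Hence $\chi$ is a well-defined algebra homomorphism.

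Surjectivity is immediate, since $Y_n$ is generated by the $T_{i,j}^{(r)}$ and therefore $\gr Y_n$ is generated by their symbols $\bar{T}_{i,j}^{(r)}$, all of which lie in the image of $\chi$. For injectivity I would invoke the PBW theorem (Theorem~\ref{pbw}): fix any total order on the generators $\big\{T_{i,j}^{(r)} \:\big|\: 1 \le i,j \le n, r > 0\big\}$; then ordered monomials form a basis for $Y_n$, and a standard reordering argument in the loop filtration (each application of \eqref{e:RTTrelations} produces only strictly lower-degree correction terms) shows that ordered monomials of total loop degree $\leq r$ span $\F_r Y_n$, and those of exact degree $r$ descend to a basis of $(\gr Y_n)_r$. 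Under the transported order on the corresponding elements $\big\{e_{i,j} t^{r-1}\big\}$ of $\g$, the classical PBW theorem for $U(\g)$ gives a basis of ordered monomials with matching degree count, and $\chi$ sends this basis bijectively onto the PBW basis of $\gr Y_n$ just described, so it is an isomorphism.

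The main subtlety, and in some sense the only nontrivial computation, is the filtration-degree bookkeeping in the first step: one must verify that the $t \geq 1$ terms of \eqref{e:RTTrelations} are indeed of strictly lower loop degree, so that the Lie bracket in $\gr Y_n$ really does reproduce the loop relations rather than some deformation of them. Everything else is formal, provided one trusts Theorem~\ref{pbw}.
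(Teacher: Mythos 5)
Your proposal is correct and follows the paper's proof essentially verbatim: the paper likewise verifies well-definedness by isolating the top loop-degree ($t=0$) term of \eqref{e:RTTrelations} and then appeals to Theorem~\ref{pbw} to see that a PBW basis of $U(\g)$ is sent to a basis of $\gr Y_n$. One small slip in your reordering remark: when swapping $T_{i,j}^{(r)}T_{k,l}^{(s)}$ the $t=0$ correction term $\delta_{k,j}T_{i,l}^{(r+s-1)}-\delta_{i,l}T_{k,j}^{(r+s-1)}$ has the \emph{same} loop degree $r+s-2$ as the product being reordered (only the $t\geq 1$ terms drop strictly), so the spanning induction must run on the number of factors as well as on degree; this does not affect the conclusion.
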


\begin{proof}
Relation (\ref{e:RTTrelations}) implies that
\begin{align*}
[\gr_{r} T_{i,j}^{(r+1)}, \gr_{s} T_{k,l}^{(s+1)}] &= [T_{i,j}^{(r+1)}, T_{k,l}^{(s+1)}] +
\F_{r+s-1} Y_n\\ 
&= \delta_{k,j} T_{i,l}^{(r+s+1)} - \delta_{i,l} T_{k,j}^{(r+s+1)} + \F_{r+s-1} Y_n \\
&= \delta_{k,j} \gr_{r+s} T_{i,l}^{(r+s+1)} - \delta_{i,l}
\gr_{r+s} T_{k,j}^{(r+s+1)}.
\end{align*}
Comparing with 
(\ref{e:multloop}), we deduce that the map in the statement of the
lemma is well defined.
To see that it is an isomorphism, 
one uses the PBW basis from Theorem~\ref{pbw} to see that a basis for $U(\g)$
is sent to a basis for $\gr Y_n$.
\end{proof}

\subsection{Drinfeld-type presentation}
Since the leading minors of the $n \times n$ matrix $T(u)$ are invertible, 
it possesses a Gauss factorization
\begin{equation}\label{gfact}
T(u) = F(u) D(u) E(u)
\end{equation}
for unique matrices
$$
D(u) = \left(
\begin{array}{cccc}
D_{1}(u) & 0&\cdots&0\\
0 & D_{2}(u) &\cdots&0\\
\vdots&\vdots&\ddots&\vdots\\
0&0 &\cdots&D_{n}(u)
\end{array}
\right),
$$$$
E(u) = 
\left(
\begin{array}{cccc}
1 & E_{1,2}(u) &\cdots&E_{1,n}(u)\\
0 & 1 &\cdots&E_{2,n}(u)\\
\vdots&\vdots&\ddots&\vdots\\
0&0 &\cdots&1
\end{array}
\right),\:
F(u) = \left(
\begin{array}{cccc}
1 & 0 &\cdots&0\\
F_{1,2}(u) & 1 &\cdots&0\\
\vdots&\vdots&\ddots&\vdots\\
F_{1,n}(u)&F_{2,n}(u) &\cdots&1
\end{array}
\right).
$$
This defines power series
\begin{align*}
D_i(u) &= \sum_{r \geq 0} D_i^{(r)} u^{-r},&
 E_{i,j}(u) &= \sum_{r \geq 0} E_{i,j}^{(r)} u^{-r},&
F_{i,j}(u) &= \sum_{r\geq 0} F_{i,j}^{(r)} u^{-r}
\end{align*}
in $Y_n[[u^{-1}]]$ with $D_i^{(0)} = 1$ and $E_{i,j}^{(0)} =
F_{i,j}^{(0)} = 0$.
Let 
\begin{align*}
E_i(u) = \sum_{r \geq 0} E_i^{(r)} u^{-r}
&:= E_{i,i+1}(u),&
F_i(u) = \sum_{r \geq 0} F_i^{(r)} u^{-r}&:= 
F_{i,i+1}(u)
\end{align*} 
for short.
Also let 
\begin{equation*}
\widetilde D_i(u) =\sum_{r \geq 0} \widetilde{D}_i^{(r)} u^{-r}
:= D_i(u)^{-1}.
\end{equation*} 
We warn the reader that our $\widetilde{D}_i(u)$  differs by a sign from one
used in \cite{BK1, BK2, BK3}.
This accounts for several other sign differences
in the exposition below compared to {\em loc. cit.}, e.g.
we have that $\widetilde D_0^{(0)} = 1$. 

In terms of quasi-determinants of \cite{GR2}, 
we have the following more explicit
descriptions, as noted already in \cite[\textsection 5]{BK1}:
\begin{equation}\label{qd1}
\!\!\!\!\!\!\!\!\!\!\!\!\!\!\!\!\!\!\!\!
D_i(u) = 
\left|
\begin{array}{cccc}
T_{1,1}(u) & \cdots & T_{1,i-1}(u)&T_{1,i}(u)\\
\vdots & \ddots &\vdots&\vdots\\
T_{i-1,1}(u)&\cdots&T_{i-1,i-1}(u)&T_{i-1,i}(u)\\
T_{i,1}(u) & \cdots & T_{i,i-1}(u)&
\hbox{\begin{tabular}{|c|}\hline$T_{i,i}(u)$\\\hline\end{tabular}}
\end{array}
\right|,
\end{equation}
\begin{equation}
\label{qd2}
E_{i,j}(u) = 
\widetilde{D}_i(u) \left|
\begin{array}{cccc}
T_{1,1}(u) & \cdots &T_{1,i-1}(u)& T_{1,j}(u)\\
\vdots & \ddots &\vdots&\vdots\\
T_{i-1,1}(u) & \cdots & T_{i-1,i-1}(u)&T_{i-1,j}(u)\\
T_{i,1}(u) & \cdots & T_{i,i-1}(u)&
\hbox{\begin{tabular}{|c|}\hline$T_{i,j}(u)$\\\hline\end{tabular}}
\end{array}
\right|,
\end{equation}
\begin{equation}\label{qd3}
F_{i,j}(u) = 
\left|
\begin{array}{cccc}
T_{1,1}(u) & \cdots &T_{1,i-1}(u)& T_{1,i}(u)\\
\vdots & \ddots &\vdots&\vdots\\
T_{i-1,1}(u) & \cdots & T_{i-1,i-1}(u)&T_{i-1,i}(u)\\
T_{j,1}(u) & \cdots & T_{j,i-1}(u)&
\hbox{\begin{tabular}{|c|}\hline$T_{j,i}(u)$\\\hline\end{tabular}}
\end{array}
\right|\widetilde{D}_i(u).
\end{equation}
Since $E_{j-1}^{(1)} = T_{j-1,j}^{(1)}$ and $F_{j-1}^{(1)} = T_{j,j-1}^{(1)}$,
it follows easily that
\begin{equation}\label{eij}
E_{i,j}^{(r)} = 
\big[E_{i,j-1}^{(r)}, E_{j-1}^{(1)}\big],\qquad
F_{i,j}^{(r)} = 
\big[F_{j-1}^{(1)},F_{i,j-1}^{(r)}\big].
\end{equation}
for $i+1 < j \leq n$. 

Now we state the main theorem of the section,
which is
the modular analogue of \cite[Theorem~5.2]{BK1}. Although not written
explicitly there in this form, this presentation must surely have been known to
Drinfeld when writing \cite{D}. 

\begin{Theorem}\label{drinpres} The algebra $Y_n$ is generated by the elements
$\big\{D_i^{(r)}, \widetilde{D}_i^{(r)}\:\big|\:1 \leq i \leq n, r > 0\big\}$
and 
$\big\{E_i^{(r)}, F_i^{(r)}\:\big|\:1 \leq i < n,r > 0\big\}$
subject only to the following relations:
\begin{align}
\big[D_i^{(r)}, D_j^{(s)}\big] &=  0,\label{r2}\\
\big[E_i^{(r)},F_j^{(s)}\big] &= -\delta_{i,j} 
\sum_{t=0}^{r+s-1} D_{i+1}^{(r+s-1-t)}\widetilde D_{i}^{(t)},\label{r3}\\
\big[D_i^{(r)}, E_j^{(s)}\big] &= (\delta_{i,j}-\delta_{i,j+1})
\sum_{t=0}^{r-1} D_i^{(t)} E_j^{(r+s-1-t)},\label{r4}\\
\big[D_i^{(r)}, F_j^{(s)}\big] &= (\delta_{i,j+1}-\delta_{i,j})
\sum_{t=0}^{r-1} F_j^{(r+s-1-t)}D_i^{(t)} ,\label{r5}\\
\big[E_i^{(r)}, E_i^{(s)}\big] &=
\sum_{t=r}^{s-1} E_i^{(t)} E_i^{(r+s-1-t)}
\hspace{11mm}\text{if $r < s$},\label{r6}\\
\big[F_i^{(r)}, F_i^{(s)}\big] &=
\sum_{t=s}^{r-1} 
F_i^{(r+s-1-t)} F_i^{(t)}\hspace{11mm}\text{if $r > s$},\label{r7}\\
\big[E_i^{(r+1)}, E_{i+1}^{(s)}\big]&- 
 \big[E_i^{(r)}, E_{i+1}^{(s+1)}\big]=
E_i^{(r)} E_{i+1}^{(s)},\label{r8}\\
\big[F_i^{(r)}, F_{i+1}^{(s+1)}\big]&- \big[F_i^{(r+1)}, F_{i+1}^{(s)}\big] =
 F_{i+1}^{(s)} F_i^{(r)},\label{r9}\\
\big[E_i^{(r)}, E_j^{(s)}\big] &= 0 \hspace{37.9mm}\text{ if }|i-j|> 1,\label{r10}\\
\big[F_i^{(r)}, F_j^{(s)}\big] &= 0 \hspace{37.9mm}\text{ if }|i-j|>
1,\label{r11}
 \end{align}\begin{align}
\Big[E_i^{(r)}, \big[E_i^{(s)}, E_j^{(t)}\big]\Big] &+ 
\Big[E_i^{(s)}, \big[E_i^{(r)}, E_j^{(t)}\big]\Big] = 0 \quad\text{ if }|i-j|=1, r
\neq s,\label{r12}\\
\Big[F_i^{(r)}, \big[F_i^{(s)}, F_j^{(t)}\big]\Big] &+ 
\Big[F_i^{(s)}, \big[F_i^{(r)}, F_j^{(t)}\big]\Big] = 0 \quad\text{ if
}|i-j|=1,r \neq s\label{r13}\\
\Big[E_i^{(r)}, \big[E_i^{(r)}, E_j^{(t)}\big]\Big] &= 0 \hspace{39.2mm}\text{ if } |i - j| = 1, \label{rel15}\\
\Big[F_i^{(r)}, \big[F_i^{(r)}, F_j^{(t)}\big]\Big] &= 0 \hspace{39.2mm}\text{ if } |i - j| = 1,\label{rel16} 
\end{align}
for all admissible $i,j,r,s, t$.
In these relations, we use the shorthands
$D_i^{(0)} = \widetilde{D}_i^{(0)} := 1$, 
and the elements $\widetilde{D}_i^{(r)}$ for $r > 0$ 
are defined recursively
by
$\widetilde{D}_i^{(r)} := -\sum_{t=1}^r D_i^{(t)} \widetilde D_i^{(r-t)}$.
\end{Theorem}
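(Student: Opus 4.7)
The plan is to adapt the characteristic-zero argument of \cite[Theorem~5.2]{BK1}, introducing relations \eqref{rel15}--\eqref{rel16} as extra generators of the ideal since these cannot be derived from \eqref{r12}--\eqref{r13} when $\operatorname{char}\k = 2$. Let $\tilde{Y}_n$ denote the abstract $\k$-algebra presented by the generators and relations listed in the statement. The goal is to construct an isomorphism $\tilde{Y}_n \isoto Y_n$ sending each generator to the element of $Y_n$ of the same name defined via the Gauss factorization \eqref{gfact}.

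First I would verify that all of the relations \eqref{r2}--\eqref{rel16} actually hold in $Y_n$ when $D_i^{(r)}$, $E_i^{(r)}$, $F_i^{(r)}$ are interpreted via \eqref{qd1}--\eqref{qd3}. These are direct (if tedious) computations from the RTT relation \eqref{e:RTTrelations} and the definition \eqref{eij} of the higher-depth $E_{i,j}$, $F_{i,j}$. Crucially, every such identity in \cite[\S5]{BK1} is proved by manipulations valid over $\Z$, so it carries over to positive characteristic without modification. The extra relations \eqref{rel15}--\eqref{rel16} are obtained by specializing $r=s$ in the identities underlying \eqref{r12}--\eqref{r13}: that computation in $Y_n$ yields $2[E_i^{(r)},[E_i^{(r)},E_j^{(t)}]]=0$ and $2[F_i^{(r)},[F_i^{(r)},F_j^{(t)}]]=0$, but also yields the stronger identities without the factor of $2$ directly. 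Combined with the fact that the Gauss factors generate $Y_n$ (since \eqref{gfact} expresses each $T_{i,j}^{(r)}$ as a polynomial in the $D, E, F$ generators), this produces a surjective $\k$-algebra homomorphism $\phi:\tilde{Y}_n \twoheadrightarrow Y_n$.

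Next I would establish a spanning set for $\tilde{Y}_n$. Fixing any total order on the set of Drinfeld generators (say, listing all $F_i^{(r)}$'s, then $\widetilde D_i^{(r)}$'s and $D_i^{(r)}$'s, then $E_i^{(r)}$'s, with some convention for the higher-depth $E_{i,j}^{(r)}$ and $F_{i,j}^{(r)}$ introduced through \eqref{eij}), one uses the relations \eqref{r2}--\eqref{r9} as rewriting rules to move generators past one another up to lower-degree corrections, and uses \eqref{r10}--\eqref{rel16} to recognize that nested commutators involving distant or coinciding simple indices can be absorbed. The outcome is that $\tilde{Y}_n$ is spanned by ordered monomials in the Drinfeld generators. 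This step mirrors the Levendorskii-style argument in \cite{BK1}; in positive characteristic the role of \eqref{rel15}--\eqref{rel16} is precisely to allow the reduction to proceed when the coefficient $2$ arising in the Serre computation is not invertible.

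Finally I would invoke the PBW theorem for $Y_n$ (Theorem~\ref{pbw}) to conclude injectivity of $\phi$. The Gauss decomposition \eqref{qd1}--\eqref{qd3} expresses the images under $\phi$ of the ordered Drinfeld monomials as a unitriangular transform, with respect to the loop filtration \eqref{filt2}, of a suitable re-ordering of PBW monomials in the $T_{i,j}^{(r)}$. Hence these images are linearly independent in $Y_n$, so the spanning set for $\tilde{Y}_n$ from the previous step is a basis and $\phi$ is an isomorphism. I expect the main obstacle to be Step~3, specifically the combinatorial reduction to ordered form; although the computations parallel \cite{BK1}, each step involving a division by an integer must be re-examined for possible $p$-torsion, and the inclusion of \eqref{rel15}--\eqref{rel16} must be shown to compensate exactly for the loss of these divisions in low characteristic.
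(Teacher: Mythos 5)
Your proposal is correct and follows essentially the same strategy as the paper: verify the relations (including \eqref{rel15}--\eqref{rel16}, which the paper extracts from the $u^{-2r}v^{-t}$-coefficient of the power-series identity $[E_i(u),[E_i(u),E_j(v)]]=0$ together with \eqref{r12}), obtain a surjection from the abstract algebra onto $Y_n$, show ordered monomials in the Drinfeld generators span the abstract algebra, and deduce injectivity from the PBW theorem for $Y_n$. The paper packages your spanning/rewriting step as the construction of a surjection $U(\g)\twoheadrightarrow\gr\widehat{Y}_n$ and concludes via the commuting triangle with $\chi:U(\g)\isoto\gr Y_n$, but this is the same argument organized through the loop filtration.
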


\begin{Remark}\label{differencerem}
The relations \eqref{r2}--\eqref{r11} are the same as relations
(5.9)--(5.18) of \cite{BK1}, 
however relations (5.19)--(5.20) of {\it loc. cit.}
are expressed here as the four relations \eqref{r12}--\eqref{rel16}. 
This is essential in charactersitic 2.
\end{Remark}

\begin{proof}
This is very similar to the proof in
the characteristic zero explained in \cite{BK1}. We just give a
brief account in order to highlight the minor differences.

To start with, we need to check that the relations
\eqref{r2}--\eqref{rel16} do indeed hold in $Y_n$.
 For all but the
last two relations, this is carried out already in \cite{BK1} over the
ground field $\mathbb{C}$. 
The arguments there start from the power series form
of the defining relation (\ref{rttnew}), namely, that
\begin{equation}
(u-v) [T_{i,j}(u), T_{k,l}(v)] = T_{k,j}(u) T_{i,l}(v) - T_{k,j}(v)
T_{i,l}(u)
\end{equation}
in $Y_n[[u^{-1}, v^{-1}]]$, then extract from this various
relations satisfied by the power series $D_i(u), E_i(u)$ and
$F_i(u)$; some of these are recorded in Lemma~\ref{powerrelations1} 
below. Then the desired relations follow by computing
various coefficients in these power series relations. These
calculations
can be performed without any difference in positive characteristic, yielding all
of our relations except for (\ref{rel15})--(\ref{rel16}).
To establish (\ref{rel15}), we use the power series relation
\begin{equation}\label{star}
[E_i(u), [E_i(u), E_j(v)]] = 0
\end{equation}
for $|i-j|=1$
which is proved by reducing first to the case $\{i,j\} = \{1,2\}$ then
arguing as in \cite[Lemma 5.6]{BK1}. 
Taking the $u^{-2r} v^{-t}$-coefficient in this identity and using also
(\ref{r12}) gives (\ref{rel15}). The proof of (\ref{rel16}) is similar.

Now let $\widehat{Y}_n$ be the algebra with generators 
$\big\{D_i^{(r)}, \widetilde{D}_i^{(r)}\:\big|\:1 \leq i \leq n, r \geq 0\big\}$
and 
$\big\{E_i^{(r)}, F_i^{(r)}\:\big|\:1 \leq i < n,r > 0\big\}$
subject to all of the relations recorded in the statement of the
theorem.
The previous paragraph implies that there is a well-defined
homomorphism $\theta:\widehat{Y}_n \rightarrow Y_n$ taking the generators of $\widehat{Y}_n$ to
the elements of $Y_n$ with the same names.
Define higher root elements $E_{i,j}^{(r)},
F_{i,j}^{(r)} \in \widehat{Y}_n$ for all $1 \leq i < j \leq n$ and $r
> 0$ by setting $E_{i,i+1}^{(r)} := E_i^{(r)}$ and
$F_{i,i+1} := F_i^{(r)}$, then using the formula (\ref{eij})
inductively when $|j-i|>1$.
This definition ensures that $\theta$ sends $E_{i,j}^{(r)}, F_{i,j}^{(r)}\in\widehat{Y}_n$
to the elements of $Y_n$ with the same names.

To complete the proof, we define a filtration $\widehat{Y}_n=\bigcup_{r \geq 0}
\F_r \widehat{Y}_n$ by declaring
that the elements $D_i^{(r)}, E_{i,j}^{(r)}$ and $F_{i,j}^{(r)}$ 
defined in the previous paragraph are of filtered
degree $r-1$. 
We claim that
there is a surjective graded algebra homomorphism
\begin{equation}\label{hydra}
\psi:U(\g) \twoheadrightarrow \gr \widehat{Y}_n,
\quad
e_{i,j} t^{r} \mapsto e_{i,j;r}
:=\left\{
\begin{array}{ll}
\gr_{r} D_i^{(r+1)}&\text{if $i=j$,}\\
\gr_{r} E_{i,j}^{(r+1)}&\text{if $i<j$,}\\
\gr_{r} F_{j,i}^{(r+1)}&\text{if $i>j$,}
\end{array}\right.
\end{equation}
for $1 \leq i,j \leq n$ and $r \geq 0$.
To see this, surjectivity is immediate from the way the filtration is defined, so
it suffices to show that the image of the defining relation (\ref{e:multloop}) for $U$ holds
in $\gr \widehat{Y}_n$, i.e.
$[e_{i,j;r}, e_{k,l;s}] = \delta_{k,l} e_{i,l;r+s}-\delta_{i,l}
e_{k,j;r+s}$
for $1 \leq i,j,k,l \leq n$ and $r,s \geq 0$.
There are six cases: (a) $i=j,k=l$; 
(b) $i = j, k < l$; (c) $i=j, k > l$; 
(d) $i < j, k < l$; (e) $i > j, k > l$; (f) $i < j, k > l$.
For these, (a) is immediate from (\ref{r3}), (c) is a similar argument
to (b), and (e) is a similar argument to (d). So we just prove (b),
(d) and (f).
The proof of (d) is the same argument as in the proof of \cite[Lemma~5.8]{BK1}
except when $r=s$ and $|i-j|=1$,
when one must replace the relation
$[e_{i,i+1;r}, [e_{i,i+1; s}, e_{j,j+1; t}]] = - [e_{i,i+1;s},
[e_{i,i+1; r}, e_{j,j+1; t}]]$ with $[e_{i,i+1;r}, [e_{i,i+1; r},
e_{j,j+1; s}]] = 0$, which follows straight from \eqref{rel15}.
Also (b) follows when $|k-l|=1$ using \eqref{r4}; then it may be
deduced in general using (d) and induction on $k-l$.
The argument for (f) is similar: it follows when $|i-j|=|k-l|=1$ using \eqref{r3}; then
it follows in general using (d), (e) and induction.

Using (\ref{qd1})--(\ref{qd3}), one sees that $D_i^{(r)}, E_{i,j}^{(r)},
F_{i,j}^{(r)} \in \F_{r-1} Y_n$, so that $\theta$ is a filtered
homomorphism. Moreover, the following diagram commutes:
$$
\begin{diagram}
\node[2]{U(\g)}\arrow{sw,t,A}{\psi}\arrow{se,t}{\chi}\\
\node{\gr \widehat{Y}_n}\arrow[2]{e,t}{\gr
  \theta}\node[2]{\gr Y_n.}
\end{diagram}
$$
We already showed that $\chi$ is an isomorphism
in Lemma~\ref{L:loopPBW}. It follows that $\gr \theta$ is one too.
Hence, $\theta$ is an isomorphism as required.
\end{proof}

Henceforth, we will {\em identify} $U(\g)$ and $\gr Y_n$ via the isomorphism
$\chi$ from Lemma~\ref{L:loopPBW}, i.e. we identify 
$e_{i,j}t^{r}$
with $\gr_{r} T_{i,j}^{(r+1)}$.
The proof of Theorem~\ref{drinpres} shows moreover
that  $D_i^{(r+1)}, E_{i,j}^{(r+1)}$ and $F_{i,j}^{(r+1)}$ all belong to
$\F_{r} Y_n$, and under our identification we have that
\begin{equation}\label{stuff}
e_{i,j} t^{r} 
=\left\{
\begin{array}{ll}
\gr_{r} D_i^{(r+1)}&\text{if $i=j$,}\\
\gr_{r} E_{i,j}^{(r+1)}&\text{if $i<j$,}\\
\gr_{r} F_{j,i}^{(r+1)}&\text{if $i>j$.}
\end{array}\right.
\end{equation}
Using this and the PBW theorem for $U(\g)$, we obtain
the following giving a PBW basis for $Y_n$ in terms of
the Drinfeld generators.

\begin{Theorem}\label{L:loopPBW2}
Ordered monomials
in the elements 
\begin{equation}
\big\{D_i^{(r)} \:\big|\: 1\leq i \leq n, r > 0\big\} \cup
\big\{E_{i,j}^{(r)}, F_{i,j}^{(r)}\:\big|\: 1\leq i < j \leq n, r >
0\big\}
\end{equation}
taken in any fixed ordering
form a basis for $Y_n$.
\end{Theorem}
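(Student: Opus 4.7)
The plan is to deduce the statement directly from Lemma~\ref{L:loopPBW} and the identifications of leading symbols recorded in (\ref{stuff}). The key observation is that under the isomorphism $\chi: U(\g) \isoto \gr Y_n$, ordered monomials in the Drinfeld-type generators $D_i^{(r)}, E_{i,j}^{(r)}, F_{i,j}^{(r)}$ (taken in a fixed ordering) of pure loop-filtration degree $r$ map to ordered monomials in the PBW basis elements $\{e_{i,j} t^s\}$ of $U(\g)$, with correspondence dictated by (\ref{stuff}). Since $U(\g)$ has a PBW basis of such monomials by the classical theorem, the images exhaust a basis of $\gr Y_n$ degree by degree.

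For the spanning statement, let $M$ be the span of ordered monomials in the Drinfeld generators. I would show $\F_r Y_n \subseteq M$ by induction on $r$. The base case is trivial. Given $y \in \F_r Y_n$, by the observation above I can find a linear combination $m$ of ordered monomials of pure loop degree~$r$ such that $\gr_r y = \gr_r m$ in $(\gr Y_n)_r$; then $y - m \in \F_{r-1} Y_n$, so the inductive hypothesis finishes the job.

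For linear independence, suppose that $\sum_\alpha c_\alpha m_\alpha = 0$ is a nontrivial dependence among ordered monomials $m_\alpha$. Let $r$ be the maximal loop-filtration degree occurring among those $m_\alpha$ with $c_\alpha \neq 0$, and project to $(\gr Y_n)_r$. The resulting relation among the leading symbols is a nontrivial linear combination of PBW monomials in $U(\g)$ of loop degree $r$ (using (\ref{stuff})), contradicting the PBW theorem for $U(\g)$.

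There is no real obstacle here, as all the substantive work has already been absorbed into Theorem~\ref{drinpres} and its proof: that is where the higher root elements $E_{i,j}^{(r)}$ and $F_{i,j}^{(r)}$ are defined via (\ref{eij}), shown to lie in $\F_{r-1} Y_n$ via the quasideterminant formulas (\ref{qd1})--(\ref{qd3}), and identified with the expected symbols $e_{i,j}t^{r-1}$ and $e_{j,i}t^{r-1}$ under $\chi$. Given this, the PBW statement for the Drinfeld generators reduces cleanly to the classical PBW theorem for $U(\gl_n[t])$ through the graded isomorphism.
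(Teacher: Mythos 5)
Your argument is correct and is exactly the route the paper takes: the paper deduces Theorem~\ref{L:loopPBW2} from (\ref{stuff}) together with the PBW theorem for $U(\g)$, i.e.\ the generators are lifts of a basis of $\g$ through the loop filtration, and the standard filtered--graded argument (spanning by induction on filtration degree, independence by passing to top symbols) does the rest. Your write-up just makes explicit what the paper leaves as a one-line deduction.
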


We will need the power series forms of some of
the relations from Theorem~\ref{drinpres}.
We record these and some consequences next.

\begin{Lemma}\label{powerrelations1}
The following equalities hold in $Y_n[[u^{-1}, v^{-1}]]$:
\begin{align}
\label{prel3}
(u-v) [E_i(u), F_i(v)] &= D_{i+1}(u)\tD_i(u)  - D_{i+1}(v)\tD_i(v),\\ \label{prel4}
(u-v) [E_i(u), E_i(v)] &= (E_i(v) - E_i(u))^2, \\ \label{prel1}
(u-v) [E_i(u), D_i(v)] &= D_i(v) (E_i(u) - E_i(v)),\\
 (u-v)[E_i(u), \tD_{i+1}(v)] &= (E_i(u) -
E_i(v)) \tD_{i+1}(v),\label{jons}\\
\label{prel5}
(u-v) [E_i(u), \tD_i(v)] &=  (E_i(v) - E_i(u)) \tD_i(v),\\
\label{prel2}
(u-v) [E_i(u), D_{i+1}(v)] &= D_{i+1}(v) (E_i(v) - E_i(u)).
\end{align}
\end{Lemma}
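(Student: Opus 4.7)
The plan is to derive all six identities from the power series form of the defining RTT relation,
$$(u-v)[T_{i,j}(u), T_{k,l}(v)] = T_{k,j}(u)T_{i,l}(v) - T_{k,j}(v)T_{i,l}(u),$$
following closely the arguments of \cite[Section 5]{BK1}. The important observation for the modular setting is that those arguments only involve integer-coefficient manipulations of power series and never require division, so they transfer verbatim from $\C$ to our field $\k$ of positive characteristic.

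First I would dispatch the two relations involving $\tD$, namely \eqref{jons} and \eqref{prel5}: they follow formally from \eqref{prel2} and \eqref{prel1} respectively via the universal identity $[X, Y^{-1}] = -Y^{-1}[X, Y] Y^{-1}$ applied with $Y = D_{i+1}(v)$ and $Y = D_i(v)$. This leaves the four relations \eqref{prel3}, \eqref{prel4}, \eqref{prel1}, \eqref{prel2}, which involve only $D_i, D_{i+1}, E_i, F_i$. By the quasi-determinant formulas \eqref{qd1}--\eqref{qd3} these series are expressible in terms of the entries $T_{k,l}(u)$ with $1 \leq k, l \leq i+1$, and a standard reduction using the Gauss decomposition of the $(i+1) \times (i+1)$ leading submatrix of $T(u)$ shows that they satisfy the same identities as the analogous series in $Y_2$ with $i$ replaced by $1$.

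It then suffices to verify the four relations in $Y_2$ with $i=1$, where one has the explicit Gauss data $D_1(u) = T_{1,1}(u)$, $E_1(u) = \tD_1(u) T_{1,2}(u)$, $F_1(u) = T_{2,1}(u) \tD_1(u)$, and $D_2(u) = T_{2,2}(u) - T_{2,1}(u)\tD_1(u)T_{1,2}(u)$. Each of the four identities then reduces to a direct calculation using the RTT relation for the four series $T_{1,1}(u), T_{1,2}(u), T_{2,1}(u), T_{2,2}(u)$. The main obstacle is the reduction from general $(n,i)$ to $(2,1)$, which is the technical heart of \cite[Section 5]{BK1} and occupies several pages there; however, since every step uses only polynomial identities with integer coefficients, the entire derivation carries over without modification to positive characteristic.
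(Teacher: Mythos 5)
Your proposal is correct and is essentially the paper's own argument: the paper likewise invokes the characteristic-zero computations of \cite[Lemma~5.4]{BK1} for four of the identities (noting they carry over verbatim since they involve no division) and obtains the two remaining ones formally from the others using $D_j(v)\tD_j(v)=1$. The only cosmetic difference is which four you treat as primary — you prove \eqref{prel2} directly and deduce \eqref{jons} via $[X,Y^{-1}]=-Y^{-1}[X,Y]Y^{-1}$, whereas the paper takes \eqref{jons} from \cite{BK1} and deduces \eqref{prel2}.
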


\begin{proof}
Equations \eqref{prel3}--\eqref{jons} were proven over $\mathbb{C}$ in \cite[Lemma~5.4]{BK1}; 
the same proof works here.
Then \eqref{prel5}--\eqref{prel2} follow from
\eqref{prel1}--\eqref{jons} using $D_j(v) \tD_j(v) =1$.
\end{proof}

\begin{Corollary}\label{violin}
The following hold in $Y_n[[u^{-1}]]$:
\begin{align}
E_i(u-1) D_i(u) &= D_i(u) E_i(u),&
\widetilde{D}_i(u)
E_i(u-1) &= E_i(u) \widetilde{D}_i(u),\label{jon2}\\
D_{i+1}(u) E_i(u) &= E_i(i+1) D_{i+1}(u), &
E_i(u)\widetilde{D}_{i+1}(u)  &= \widetilde{D}_{i+1}(u) E_i(i+1).\label{jons3}
\end{align}
\end{Corollary}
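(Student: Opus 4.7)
The plan is to derive each of the four identities by making a suitable shift substitution into one of the power series relations from Lemma~\ref{powerrelations1}, so that the factor $(u-v)$ becomes $\pm 1$ and enough terms cancel. Concretely, I would substitute $u = v-1$ (respectively $u = v+1$) into the relevant identity; such substitutions make sense as operations on $Y_n[[u^{-1},v^{-1}]]$ since $(v\pm 1)^{-r}$ has a well-defined expansion in $v^{-1}$.

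For the first identity, starting from \eqref{prel1},
\[
(u-v)[E_i(u), D_i(v)] = D_i(v)(E_i(u) - E_i(v)),
\]
substituting $u = v-1$ gives
\[
-[E_i(v-1), D_i(v)] = D_i(v) E_i(v-1) - D_i(v) E_i(v),
\]
and after expanding the commutator the term $D_i(v) E_i(v-1)$ cancels, leaving $E_i(v-1) D_i(v) = D_i(v) E_i(v)$, which is the first identity (after renaming $v$ to $u$). The second identity is obtained analogously by substituting $u = v-1$ into \eqref{prel5}; the third by substituting $u = v+1$ into \eqref{prel2}; the fourth by substituting $u = v+1$ into \eqref{jons}. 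In each case one commutator term cancels against one of the two linear pieces on the right-hand side, yielding the desired identity.

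There is no serious obstacle; the only thing to be a little careful about is the legitimacy of the substitution $u \mapsto v \pm 1$, which is unambiguous because each coefficient $E_i^{(r)}(v\pm 1)^{-r}$, $D_i^{(r)}(v\pm 1)^{-r}$, etc.\ expands as a well-defined element of $Y_n[[v^{-1}]]$, and only finitely many of these contribute to each $v^{-k}$-coefficient of the resulting identity. Once that is noted, the computation in each of the four cases is a two-line rearrangement.
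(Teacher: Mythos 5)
Your proposal is correct and is essentially the paper's own argument: the paper likewise obtains all four identities by specializing one variable to the other shifted by $\pm 1$ in the relevant relation of Lemma~\ref{powerrelations1} (e.g.\ setting $v:=u+1$ in \eqref{prel1}), after which one term cancels. The only cosmetic difference is which of $u,v$ you eliminate, and you correctly note the well-definedness of the substitution.
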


\begin{proof}
These follow from the identities in the previous lemma by specializing
$v$.
For example, to get the first relation in (\ref{jon2}), set $v:=u+1$
in (\ref{prel1}), simplify, then replace $u$ by $u-1$.
\end{proof}

\begin{Lemma}\label{le}
The following relations hold in $Y_n[[u^{-1}, v^{-1}]]$
for all $m \geq 0$:
\begin{align}
(u-v)[E_i(u), (E_i(v)-E_i(u))^m] &=
m (E_i(v)-E_i(u))^{m+1},\label{new1}\\
(u-v)[E_i(u), D_i(v)(E_i(v)-E_i(u))^m] &= (m-1) D_i(v)(E_i(v)-E_i(u))^{m+1},\label{new2}\\
 (u-v)[E_i(u), D_{i+1}(v)(E_i(v)-E_i(u))^m] &= (m+1) D_{i+1}(v)(E_i(v)-E_i(u))^{m+1},\label{new3}\\
 (u-v)[E_i(u), D_{i+1}(v)(E_i(v)\!-\!E_i(u))^m\tilde{D}_i(v)] &= (m+2)
 D_{i+1}(v)(E_i(v)\!-\!E_i(u))^{m+1} \tilde D_i(v).\label{new4}
\end{align}
\end{Lemma}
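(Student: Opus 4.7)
The plan is to prove \eqref{new1} first, then derive the other three relations from it by using the Leibniz rule together with the appropriate identities from Lemma~\ref{powerrelations1}.

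For \eqref{new1}, set $X := E_i(v) - E_i(u)$. Since $[E_i(u), E_i(u)] = 0$, we have $[E_i(u), X] = [E_i(u), E_i(v)]$, so relation \eqref{prel4} tells us that $(u-v)[E_i(u), X] = X^2$. The key observation is that $X$ commutes with $[E_i(u), X]$, because $[E_i(u), X]$ equals a scalar multiple of $X^2$ (after formally dividing by $u-v$ in $A[[u^{-1}]][u]$, or equivalently after multiplying by $u-v$ on both sides of whatever identity we verify). Therefore the usual Leibniz identity
\[
[E_i(u), X^m] = \sum_{k=0}^{m-1} X^k [E_i(u), X] X^{m-1-k}
\]
collapses to $m X^{m-1} [E_i(u), X]$, and multiplying by $u-v$ yields $(u-v)[E_i(u), X^m] = m X^{m-1} \cdot X^2 = m X^{m+1}$, which is \eqref{new1}.

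For \eqref{new2}--\eqref{new4}, I would apply the Leibniz rule to break each commutator into pieces, then use the relevant identity from Lemma~\ref{powerrelations1} together with \eqref{new1}. Explicitly, for \eqref{new2},
\[
(u-v)[E_i(u), D_i(v) X^m] = (u-v)[E_i(u), D_i(v)] X^m + D_i(v) (u-v)[E_i(u), X^m],
\]
and \eqref{prel1} rewrites the first term as $D_i(v)(E_i(u)-E_i(v)) X^m = -D_i(v) X^{m+1}$, while \eqref{new1} handles the second, giving $(m-1) D_i(v) X^{m+1}$. For \eqref{new3}, the analogous computation uses \eqref{prel2} instead of \eqref{prel1}, which introduces a plus sign, yielding $(m+1) D_{i+1}(v) X^{m+1}$. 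Finally, \eqref{new4} follows by one more application of Leibniz applied to the product $\big(D_{i+1}(v) X^m\big) \cdot \tD_i(v)$: the first factor is handled by \eqref{new3}, while the derivative landing on $\tD_i(v)$ is controlled by \eqref{prel5}, which contributes an extra $+X^{m+1}$, pushing the coefficient from $m+1$ to $m+2$.

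There is no serious obstacle here; the only subtlety is the first step, where one must justify that $X$ commutes with $[E_i(u), X]$ so that the Leibniz sum collapses cleanly — this is immediate from \eqref{prel4}. The remaining verifications are purely formal manipulations in $Y_n[[u^{-1},v^{-1}]]$ (carried out in $Y_n[[u^{-1},v^{-1}]][u,v]$ if one wishes to move the factor $u-v$ around freely).
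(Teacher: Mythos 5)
Your proposal is correct and follows essentially the same route as the paper: \eqref{new1} is obtained from \eqref{prel4} via the Leibniz rule (multiplying each term of the Leibniz expansion by $u-v$ and substituting $(u-v)[E_i(u),X]=X^2$ makes the collapse immediate, with no need to separately argue that $X$ commutes with the bracket), and \eqref{new2}--\eqref{new4} then follow from \eqref{new1} together with \eqref{prel1}, \eqref{prel2} and \eqref{prel5} by one more application of Leibniz, with the sign bookkeeping exactly as you describe.
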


\begin{proof}
The relation (\ref{new1}) follows from (\ref{prel4}) and the Leibniz
rule.
Then (\ref{new2})--(\ref{new4}) follow from (\ref{new1}),
(\ref{prel1}), (\ref{prel5}) and (\ref{prel2}) using Leibniz again.
\end{proof}

The following relations are closely related to the ones in
Lemma~\ref{le}, but it is easier to prove them from scratch.

\begin{Lemma}\label{lf}
For any $i = 1,\dots,n-1$, $m \geq 0$ and $r,s > 0$, we have that
\begin{align}\label{epic1}
\Bigg[E_i^{(r)}, \hspace{-6mm}\sum_{\substack{s_1,\dots,s_m \geq r \\ s_1+\cdots+s_m =
    (m-1)(r-1)+s}}
\hspace{-10mm}
E_i^{(s_1)}
\cdots
E_i^{(s_m)}\Bigg]
&= m
\hspace{-6mm}\sum_{\substack{s_1,\dots,s_{m+1} \geq r \\ s_1+\cdots+s_{m+1} =
    m(r-1)+s}}
\hspace{-10mm}E_i^{(s_1)}
\cdots
E_i^{(s_{m+1})},\\
\Bigg[E_i^{(r)}, \hspace{-6mm}\sum_{\substack{s_1,\dots,s_m \leq r-1 \\ s_1+\cdots+s_m =
    (m-1)(r-1)+s}}\hspace{-10mm}
E_i^{(s_1)}
\cdots
E_i^{(s_m)}\Bigg]
&= -m\hspace{-6mm}
\sum_{\substack{s_1,\dots,s_{m+1} \leq r-1 \\ s_1+\cdots+s_{m+1} =
    m(r-1)+s}}\hspace{-10mm}
E_i^{(s_1)}
\cdots
E_i^{(s_{m+1})},\label{epic2}\\
\Bigg[E_i^{(r)}, \hspace{-6mm}\sum_{\substack{s_1,\dots,s_m \geq r, t \geq 0 \\ s_1+\cdots+s_m+t =
    m(r-1)+s}}\hspace{-10mm}
D_i^{(t)} E_i^{(s_1)}
\cdots
E_i^{(s_m)}\Bigg]
&= (m-1)\hspace{-12mm}
\sum_{\substack{s_1,\dots,s_{m+1} \geq r, t \geq 0 \\ s_1+\cdots+s_{m+1}+t =
    (m+1)(r-1)+s}}
\hspace{-14mm} D_i^{(t)} E_i^{(s_1)}
\cdots
E_i^{(s_{m+1})},\label{epic3}
\end{align}\begin{align}
\Bigg[E_i^{(r)}, \hspace{-6mm}\sum_{\substack{s_1,\dots,s_m \geq r, t \geq 0 \\ s_1+\cdots+s_m+t =
    m(r-1)+s}}\hspace{-10mm}
D_{i+1}^{(t)} E_i^{(s_1)}
\cdots
E_i^{(s_m)}\Bigg]
&= (m+1)
\hspace{-12mm}\sum_{\substack{s_1,\dots,s_{m+1} \geq r, t \geq 0 \\ s_1+\cdots+s_{m+1}+t =
    (m+1)(r-1)+s}}
\hspace{-14mm} D_{i+1}^{(t)} E_i^{(s_1)}
\cdots
E_i^{(s_{m+1})},\label{epic4}\\
\Bigg[E_i^{(r)}, \hspace{-8mm}\sum_{\substack{s_1,\dots,s_m \geq r,
    t\geq 0, u
      \geq 0 \\ s_1+\cdots+s_m+t+u =
    m(r-1)+s}}\hspace{-12mm}
D_{i+1}^{(t)} E_i^{(s_1)}
\cdots
E_i^{(s_m)}
\tilde D_i^{(u)}\Bigg]
&= (m+2)\hspace{-16mm}
\sum_{\substack{s_1,\dots,s_{m+1} \geq r, t\geq 0, u \geq 0 \\
    s_1+\cdots+s_{m+1} +t+u=
    (m+1)(r-1)+s}}
\hspace{-17mm} D_{i+1}^{(t)} E_i^{(s_1)}
\cdots
E_i^{(s_{m+1})}
\widetilde{D}_i^{(u)}.\label{epic5}
\end{align}
\end{Lemma}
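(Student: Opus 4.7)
The approach is to compute each commutator directly by applying the Leibniz rule to every monomial in the indicated sum and then substituting the elementary commutation relations supplied by Theorem~\ref{drinpres}. The four input relations I shall use are \eqref{r6}, namely $[E_i^{(r)},E_i^{(s_j)}]=\sum_{t=r}^{s_j-1}E_i^{(t)}E_i^{(r+s_j-1-t)}$ (with the opposite sign when $s_j<r$), the two cases $j=i$ of \eqref{r4} giving $[E_i^{(r)},D_i^{(t)}]=-\sum_{v=0}^{t-1}D_i^{(v)}E_i^{(t+r-1-v)}$ and $[E_i^{(r)},D_{i+1}^{(t)}]=+\sum_{v=0}^{t-1}D_{i+1}^{(v)}E_i^{(t+r-1-v)}$, and finally the identity
\[
[E_i^{(r)},\widetilde D_i^{(u)}]=\sum_{j=0}^{u-1}E_i^{(r+j)}\widetilde D_i^{(u-j-1)}\qquad(r\geq 1,\;u\geq 1),
\]
which I establish first. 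This last identity is obtained by taking the $u^{-r}v^{-c-1}$-coefficient in \eqref{prel5}, producing the recursion $[E_i^{(r)},\widetilde D_i^{(c+1)}]=[E_i^{(r+1)},\widetilde D_i^{(c)}]+E_i^{(r)}\widetilde D_i^{(c)}$, and iterating down along the anti-diagonal to $\widetilde D_i^{(0)}=1$, where the bracket vanishes.

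With these tools in hand, each of \eqref{epic1}--\eqref{epic5} is proven by expanding the bracket via Leibniz, substituting, and matching terms. In every case the resulting expression is a sum of $(m{+}1)$-factor monomials of exactly the shape appearing on the right-hand side, and the only real work is a multiplicity count. For a fixed target monomial $E_i^{(s_1')}\cdots E_i^{(s_{m+1}')}$ (possibly flanked by $D^{(t)}$ or $\widetilde D^{(u)}$) on the right, I identify each ``ancestor'' monomial on the left as the result of merging some consecutive pair of generators, under the rule $E_i^{(a)}E_i^{(b)}\mapsto E_i^{(a+b-r+1)}$ for adjacent $E$'s, $D^{(t)}E_i^{(b)}\mapsto D^{(t+?)}$ for the leftmost pair when a $D$ is present, and analogously $E_i^{(a)}\widetilde D_i^{(u)}\mapsto \widetilde D_i^{(u+?)}$ for the rightmost pair. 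Each of the $m$ internal $E$-merges contributes $+1$ by \eqref{r6} (and $-1$ in \eqref{epic2} because of the sign flip when $s_j<r$). The leftmost $D_i$-merge contributes $-1$ by the sign in \eqref{r4}, while the leftmost $D_{i+1}$-merge contributes $+1$, and the rightmost $\widetilde D_i$-merge contributes $+1$ by the auxiliary identity. Summing these contributions yields the stated coefficients: $m$ in \eqref{epic1}, $-m$ in \eqref{epic2}, $-1+m=m-1$ in \eqref{epic3}, $1+m=m+1$ in \eqref{epic4}, and $1+m+1=m+2$ in \eqref{epic5}.

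The main obstacle is the combinatorial bookkeeping: one has to check that in each merge the reconstructed ancestor is a valid index in the left-hand sum (so that $s_k\geq r$, or $s_k\leq r-1$ in \eqref{epic2}, and $t,u\geq 0$ as appropriate) and that, conversely, the multiplicities are exact, i.e.\ no spurious term appears and every target is hit with the claimed count. All of these boundary conditions follow automatically from the inequalities $s_k'\geq r$ (or $\leq r-1$) and $t,u\geq 0$ imposed on the target monomial, together with the range $v\in[0,t-1]$ in \eqref{r4} and $t\in[r,s_j-1]$ in \eqref{r6}, so there is no case analysis beyond this verification. Once the counts are tallied the five identities drop out simultaneously.
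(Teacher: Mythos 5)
Your proposal is correct and follows essentially the same route as the paper: Leibniz expansion combined with the elementary relations (\ref{r6}), the $j=i$ cases of (\ref{r4}), and the auxiliary identity $[E_i^{(r)},\widetilde D_i^{(s)}]=\sum_{t=0}^{s-1}E_i^{(r+s-1-t)}\widetilde D_i^{(t)}$ extracted from (\ref{prel5}), which is exactly the paper's (\ref{epic6}). The only differences are cosmetic — you organize the multiplicity count backwards from a target monomial, and the paper derives (\ref{epic3})--(\ref{epic5}) by one further Leibniz application on top of the already-proved (\ref{epic1}) rather than redoing the full count — so no comment beyond this is needed.
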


\begin{proof}
For (\ref{epic2}), we note in the summation on the left
that $s_1,\dots,s_m \leq r-1$ and $s_1+\cdots+s_m = (m-1)(r-1)+s$
implies
that $r-1 \geq s_1,\dots,s_m \geq s > 0$, so the expression makes
sense.
By (\ref{r6}), we have that
$$
[E_i^{(r)}, E_i^{(s_k)}] = - \sum_{\substack{s_k', s_k'' \leq r-1\\ s_k'+s_k'' = s_k+r-1}} 
\!\!\!\!\!E_i^{(s_k')} E_i^{(s_k'')}
$$
for $0 < s_k \leq r-1$.
Using this and the Leibniz rule, we deduce that the left hand side of
(\ref{epic2}) equals
\begin{multline*}
-\sum_{k=1}^m
\sum_{\substack{s_1,\dots,s_m \leq r-1 \\ s_1+\cdots+s_m =
    (m-1)(r-1)+s}}
\sum_{\substack{s_k', s_k'' \leq r-1 \\ s_k'+s_k'' = s_k+r-1}} 
\!\!\!\!\!E_i^{(s_1)} \cdots E_i^{(s_{k-1})}E_i^{(s_k')} E_i^{(s_k'')}E_i^{(s_{k+1})} \cdots
E_i^{(s_m)}
\\=
-\sum_{k=1}^m
\sum_{\substack{s_1,\dots,s_{m+1} \leq r-1 \\ s_1+\cdots+s_{m+1} =
    m(r-1)+s}}
\!\!\!\!\!E_i^{(s_1)} \cdots E_i^{(s_{m+1})},
\end{multline*}
which gives the right hand side of (\ref{epic2}). The proof of
(\ref{epic1}) is similar.
Then (\ref{epic3})--(\ref{epic4}) follow from (\ref{r4})
and (\ref{epic1}) with
one more application of Leibniz.
Finally (\ref{epic5}) follows similarly from (\ref{epic4}) together
with
\begin{equation}\label{epic6}
\big[E_i^{(r)}, \tilde D_i^{(s)}\big] = 
\sum_{t=0}^{s-1} E_i^{(r+s-1-t)} \tilde D_i^{(t)}.
\end{equation}
This may be deduced from (\ref{prel5}) by dividing by $(u-v)$ then
equating coefficients in exactly the manner explained in the paragraph
following \cite[(5.23)]{BK1}.
\end{proof}

The relations in the next lemma involve the expressions
\begin{align}
D_{i\downarrow m}(u) &:= 
D_i(u) D_i(u-1)\cdots D_i(u-m+1),\\
D_{i \uparrow m}(u) &:= D_i(u) D_i(u+1) \cdots D_i(u+m-1) .
\end{align}
In view of (\ref{r2}), the order of the products on the right
hand sides here is irrelevant.

\begin{Lemma}
The following relations hold for all $m \geq 1$:
\begin{align}\label{Beq1}
(u-v) [D_{i\downarrow m}(u), E_i(v)] &=
m D_{i \downarrow m}(u)(E_i(v)-E_i(u)),\\
(u-v) [D_{i\uparrow m}(u), E_{i-1}(v)] &=
m D_{i \uparrow m}(u)(E_{i-1}(u)-E_{i-1}(v)).
\label{Beq2}
\end{align}
\end{Lemma}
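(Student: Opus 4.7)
We prove \eqref{Beq1} and \eqref{Beq2} in parallel by induction on $m$. The base case $m = 1$ of \eqref{Beq1} is \eqref{prel1} rewritten after swapping $u$ and $v$ (and negating); likewise the base case of \eqref{Beq2} comes from \eqref{prel2} under $i \mapsto i-1$ and $u \leftrightarrow v$.

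For the inductive step of \eqref{Beq1}, I would use the commutativity of the various $D_i(u-j)$ (from \eqref{r2}) to factor $D_{i\downarrow m}(u) = D_{i\downarrow m-1}(u) D_i(u-m+1)$, and apply the Leibniz rule to obtain
\begin{equation*}
(u-v)[D_{i\downarrow m}(u), E_i(v)] = (u-v)[D_{i\downarrow m-1}(u), E_i(v)] \cdot D_i(u-m+1) + D_{i\downarrow m-1}(u) \cdot (u-v)[D_i(u-m+1), E_i(v)].
\end{equation*}
The first summand is evaluated by the inductive hypothesis. For the second, the shifted base case yields $(u-m+1-v)[D_i(u-m+1), E_i(v)] = D_i(u-m+1)(E_i(v) - E_i(u-m+1))$, but the prefactor here is $(u-m+1-v)$ rather than $(u-v)$, and without this matching one cannot simply substitute.

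To bridge the gap without inverting anything, I multiply the entire equation by $(u-m+1-v)$, which is a monic polynomial in $u$ over $Y_n[[u^{-1},v^{-1}]]$ and thus a nonzero divisor, and then use the splitting $(u-v) = (u-m+1-v) + (m-1)$. After expanding and collecting terms that appear both as $E_i(v), E_i(u), E_i(u-m+1)$ sandwiched between $B := D_i(u-m+1)$ on the left and the right, the desired identity reduces to verifying
\begin{equation*}
(m-1) E_i(u) D_i(u-m+1) = m D_i(u-m+1) E_i(u) - D_i(u-m+1) E_i(u-m+1),
\end{equation*}
which is \eqref{prel1} specialised at $v = u$ with $u$ replaced by $u-m+1$. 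Cancelling the nonzero divisor $(u-m+1-v)$ then gives \eqref{Beq1} for $m$. The inductive step for \eqref{Beq2} is structurally identical, using \eqref{prel2} as the base case and the factorisation $D_{i\uparrow m}(u) = D_{i\uparrow m-1}(u) D_i(u+m-1)$.

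The main obstacle is combinatorial bookkeeping: the Leibniz expansion combined with the $(u-v) = (u-m+1-v) + (m-1)$ splitting produces roughly eight monomials in $E_i(\cdot)$ and $B$, and one must use the base case \eqref{prel1} in two specialisations (at generic $v$, and at $v = u$ with $u$ shifted to $u-m+1$) to see that all contributions cancel except for the residue identity above. It is essential that this works characteristic-freely: even when $p \mid (m-1)$, the specialisation $(m-1)[B, E_i(u)] = B(E_i(u-m+1)-E_i(u))$ holds on the nose, both sides being zero because $(u-m+1)^{-s} = u^{-s}$ once $m - 1 \equiv 0 \pmod p$.
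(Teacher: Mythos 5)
Your proposal is correct and follows essentially the same route as the paper: an induction on $m$ in which the mismatched linear prefactor coming from the extra factor $D_i(u-m+1)$ is absorbed by multiplying through by the nonzero divisor $(u-m+1-v)$ and invoking \eqref{prel1} both at generic $v$ and at the coincident specialisation $v=u-m+1$ (the latter being your residue identity, and the analogue of the paper's use of \eqref{jon2}). The paper merely streamlines the bookkeeping by first recasting \eqref{Beq1} in the equivalent three-term form \eqref{Beq3} before multiplying on the left by $(u-v-m-1)D_i(u-m)$, but the cancellations are the same ones you describe.
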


\begin{proof}
For (\ref{Beq1}), we actually prove it in the following equivalent
form:
\begin{align}\label{Beq3}
(u-v-m) D_{i\downarrow m}(u)E_i(v)
&= (u-v) E_i(v)
D_{i\downarrow m}(u)
- m D_{i\downarrow m}(u)
 E_i(u).
\end{align}
This follows when $m=1$ from (\ref{prel1}).
To prove (\ref{Beq3}) in general, proceed by induction: given
(\ref{Beq3}) for some $m \geq 1$, multiply both sides on the left by
$(u-v-m-1) D_i(u-m)$ then simplify using the $m=1$ case already
proved plus (\ref{jon2})
to obtain the analogous formula with $m$ replaced by $m+1$.

The proof of (\ref{Beq2}) is similar. One actually shows equivalently that
\begin{equation}\label{Beq6}
(u-v+m) D_{i\uparrow m}(u) E_{i-1}(v) =
(u-v) E_{i-1}(v) D_{i\uparrow m}(u) + m D_{i\uparrow m}(u) E_{i-1}(u).
\end{equation}
This follows when $m=1$ from (\ref{prel2}), 
then the general case
follows by a similar induction to the previous paragraph.
\end{proof}

For our final relations, we let
\begin{equation}\label{hum}
H_i(u) = \sum_{r \geq 0} H_i^{(r)} u^{-r} := -D_{i+1}(u) \widetilde{D}_i(u)
\end{equation}
assuming $1 \leq i < n$.
In particular, $H_i^{(0)} = -1$.
It is straightforward to see from (\ref{stuff}) that $H_i^{(r+1)} \in
\F_{r} Y_n$ and
\begin{equation}\label{stuffy}
\gr_r H_i^{(r+1)} = e_{i,i} t^r-e_{i+1,i+1} t^{r}.
\end{equation}
Note also by Corollary~\ref{violin} that
\begin{equation}\label{stuffier}
H_i(u) E_i(u-1) = E_i(u+1) H_i(u).
\end{equation}

\begin{Lemma}\label{hlem}
The following relations hold in $Y_n[[u^{-1}, v^{-1}]]$:
\begin{align}
\label{h2}(u-v-1)[H_{i}(u), E_i(v)] &= 2 H_{i}(u)(E_i(u-1)-E_i(v)),\\
\label{h2b}(u-v+1)[H_{i}(u), E_i(v)] &= 2 (E_i(u+1)-E_i(v)) H_{i}(u),\\
\label{h1}(u-v)[H_{i-1}(u), E_i(v)] &= 
- H_{i-1}(u)(E_i(u)-E_i(v)),\\
\label{h1b}
(u-v-1)[H_{i-1}(u), E_i(v)] &= 
- (E_i(u-1)-E_i(v)) H_{i-1}(u),\\
\label{h3b}(u-v)[H_{i+1}(u), E_i(v)] &= - (E_i(u)-E_i(v))
H_{i+1}(u),\\
\label{h3}(u-v+1)[H_{i+1}(u), E_i(v)] &= - H_{i+1}(u)(E_i(u+1)-E_i(v)).
\end{align}
\end{Lemma}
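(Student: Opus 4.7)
The plan is uniform: expand each $H_k(u) = -D_{k+1}(u)\widetilde{D}_k(u)$ by the Leibniz rule, compute the two resulting commutators with $E_i(v)$ using Lemma~\ref{powerrelations1}, and then produce the characteristic shifts $u\pm 1$ on the final answer by applying the specializations from Corollary~\ref{violin}.

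Relations \eqref{h1} and \eqref{h3b} are easy because one of the two factors commutes with $E_i(v)$. For \eqref{h1}, the index $i-1$ satisfies $i-1\notin\{i,i+1\}$, so relation \eqref{r4} of Theorem~\ref{drinpres} shows that $\widetilde{D}_{i-1}(u)$ commutes with both $E_i(u)$ and $E_i(v)$. Hence $[H_{i-1}(u),E_i(v)] = -[D_i(u),E_i(v)]\widetilde{D}_{i-1}(u)$; multiplying by $u-v$, applying \eqref{prel1} (with $u,v$ swapped), and sliding the factor $E_i(u)-E_i(v)$ past $\widetilde{D}_{i-1}(u)$ identifies the product as $-H_{i-1}(u)(E_i(u)-E_i(v))$. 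The proof of \eqref{h3b} is entirely analogous: now $D_{i+2}(u)$ is the benign factor, and \eqref{jons} supplies the nontrivial commutator.

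The heart of the argument is \eqref{h2}. Here both factors in $H_i(u)$ contribute, and applying Leibniz together with \eqref{prel2} and \eqref{prel5} yields that the two contributions coincide, giving
\[ (u-v)[H_i(u), E_i(v)] = -2\, D_{i+1}(u)(E_i(u)-E_i(v))\widetilde{D}_i(u). \]
A short check, by multiplying through by $u-v$, verifies the key coincidence $[D_{i+1}(u), E_i(v)]\widetilde{D}_i(u) = D_{i+1}(u)[\widetilde{D}_i(u), E_i(v)]$; this in turn shows that $[H_i(u),E_i(v)] = -2 D_{i+1}(u)[\widetilde{D}_i(u),E_i(v)] = -2 [D_{i+1}(u),E_i(v)]\widetilde{D}_i(u)$. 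Subtracting the first of these from the displayed equation yields $(u-v-1)[H_i(u),E_i(v)] = -2D_{i+1}(u)E_i(u)\widetilde{D}_i(u) + 2D_{i+1}(u)\widetilde{D}_i(u)E_i(v)$, and invoking $\widetilde{D}_i(u)E_i(u-1) = E_i(u)\widetilde{D}_i(u)$ from Corollary~\ref{violin} recognizes the right-hand side as $2H_i(u)(E_i(u-1)-E_i(v))$, proving \eqref{h2}. Adding the second form to the displayed equation and using $D_{i+1}(u)E_i(u) = E_i(u+1)D_{i+1}(u)$ instead yields \eqref{h2b}.

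Finally, \eqref{h1b} and \eqref{h3} are formal consequences of \eqref{h1} and \eqref{h3b}. Writing $(u-v\mp 1) = (u-v)\mp 1$ and substituting from the unshifted relation, each reduces to the identity $H_{i\mp 1}(u)E_i(u) = E_i(u\mp 1)H_{i\mp 1}(u)$, which follows at once by combining the relevant specialization from Corollary~\ref{violin} with the fact that $\widetilde{D}_{i-1}(u)$ (respectively $D_{i+2}(u)$) commutes with $E_i$. The only genuine obstacle in the whole proof is bookkeeping the positions of $E_i(v)$, $D_{i+1}(u)$, and $\widetilde{D}_i(u)$ in \eqref{h2} and \eqref{h2b}; the observation $[D_{i+1}(u),E_i(v)]\widetilde{D}_i(u) = D_{i+1}(u)[\widetilde{D}_i(u),E_i(v)]$ is precisely what makes the two Leibniz expansions compatible and allows the shift to be absorbed cleanly.
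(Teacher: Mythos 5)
Your proof is correct and follows essentially the same route as the paper: Leibniz on $H_k(u)=-D_{k+1}(u)\widetilde{D}_k(u)$, the power series relations of Lemma~\ref{powerrelations1}, and the specializations of Corollary~\ref{violin}. The only organizational difference is in \eqref{h2}--\eqref{h2b}: the paper obtains the factor $2$ from the $m=0$ case of \eqref{new4} and produces the shift by multiplying through by $(u-v+1)$ before interchanging $u$ and $v$, whereas you observe directly that the two Leibniz terms coincide and then add or subtract the resulting two forms of $[H_i(u),E_i(v)]$ --- the same computation in a different order. One small correction to your last paragraph: the identity to which \eqref{h3} reduces is $E_i(u)H_{i+1}(u)=H_{i+1}(u)E_i(u+1)$, not $H_{i+1}(u)E_i(u)=E_i(u+1)H_{i+1}(u)$ as your unified $\mp$ formula asserts; the latter is not a consequence of the relations (it would force $E_i(u)$ to commute with $D_{i+1}(u)^2$). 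The sources you cite --- the second identity of \eqref{jons3} together with the commutation of $D_{i+2}(u)$ with $E_i$ --- yield exactly the correct, side-swapped version, and with that the argument goes through.
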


\begin{proof}
We just go through (\ref{h1}), (\ref{h3}) and (\ref{h2}), since
the other three are similar.
The identity (\ref{h1}) follows easily from (\ref{prel1}) and Leibniz, using that
$\widetilde{D}_{i-1}(v)$ commutes with $E_i(u)$ by (\ref{r2}).
For (\ref{h3}), we have by (\ref{jons}) that
$$
(u-v-1) E_i(u) \widetilde{D}_{i+1}(v) = (u-v) \widetilde{D}_{i+1}(v) E_i(u) -E_i(v)\widetilde{D}_{i+1}(v).
$$
Using also (\ref{jons3}) gives that
$$
(u-v-1) [E_i(u), \widetilde{D}_{i+1}(v)] = \widetilde{D}_{i+1}(v) E_i(u) -\widetilde{D}_{i+1}(v)
E_i(v+1).
$$
Now multiply by $D_{i+2}(v)$ (which commutes with $E_i(u)$) then
interchange $u$ and $v$ to get
the conclusion.
Finally, for (\ref{h2}), we have by (\ref{new4}) that
\begin{equation}\label{song}
(u-v)[E_i(u), D_{i+1}(v) \widetilde{D}_i(v)] = 2
D_{i+1}(v)(E_i(v)-E_i(u))\widetilde{D}_i(v).
\end{equation}
From (\ref{prel5}), we have that
$$
(u-v+1) E_i(u)  \widetilde{D}_i(v) = (u-v) \widetilde{D}_i(v) E_i(u) +
\widetilde{D}_i(v) E_i(v-1).
$$
Then we multiply (\ref{song}) by $(u-v+1)$ and use these identities
plus (\ref{jon2}) to obtain
$$
(u-v)(u-v+1)[E_i(u), H_i(v)] = 2 (u-v)
H_i(v)(
E_i(v-1)-E_i(u)).
$$
Dividing through by $(u-v)$ and interchanging $u$ and $v$ gives the result.
\end{proof}

\begin{Corollary}
The following hold in $Y_n[[u^{-1}, v^{-1}]]$:
\begin{align}
(u-v) [H_i(u), E_i(v)] &= - H_i(u)E_i(v)-E_i(v)H_i(u) + 2 H_i(u) E_i(u-1),\label{hcor0}\\
 (u-v)\Big[H_{i-1}\big(u+\half\big) , E_i(v) \Big] &= 
\half \Big(H_{i-1}(u+\half) E_i(v)
+
E_i(v) H_{i-1}\big(u+\half\big)\Big)\notag\\
&\hspace{-8mm}- \half \Big(H_{i-1}\big(u+\half\big) E_i\big(u+\half\big)+ E_i\big(u-\half\big) H_{i-1}\big(u+\half\big)\Big),\label{hcor1}\\
 (u-v)\Big[H_{i+1}\big(u-\half\big), E_i(v)\Big] &= 
\half \Big(H_{i+1}(u-\half) E_i(v)
+
E_i(v) H_{i+1}\big(u-\half\big)\Big)\notag\\
&\hspace{-8mm}- \half \Big(H_{i+1}\big(u-\half\big) E_i\big(u+\half\big)+ E_i\big(u-\half\big) H_{i+1}\big(u-\half\big)\Big),\label{hcor3}
\end{align}
assuming that $\operatorname{char}\k \neq 2$ for the last two
(so that $\half$ makes sense).
\end{Corollary}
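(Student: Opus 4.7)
The plan is to derive each of the three identities directly from the six relations in Lemma~\ref{hlem} by simple algebraic manipulations. The key observation is that the factors $(u-v-1)$ and $(u-v+1)$ appearing in Lemma~\ref{hlem} differ from $(u-v)$ by an integer shift, which can be absorbed either by expanding and rearranging, or by averaging two shifted forms.

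For (\ref{hcor0}), which holds in any characteristic, I would start from (\ref{h2}), write $(u-v-1) = (u-v)-1$, move the $-[H_i(u),E_i(v)]$ term to the right-hand side, and expand $[H_i(u), E_i(v)] = H_i(u) E_i(v) - E_i(v) H_i(u)$. This immediately yields
\[
(u-v)[H_i(u), E_i(v)] = -H_i(u) E_i(v) - E_i(v) H_i(u) + 2 H_i(u) E_i(u-1),
\]
as claimed. (As a consistency check, one could instead start from (\ref{h2b}) and use $H_i(u) E_i(u-1) = E_i(u+1) H_i(u)$ from (\ref{stuffier}) to match.)

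For (\ref{hcor1}), I would substitute $u \mapsto u + \half$ in both (\ref{h1}) and (\ref{h1b}), which now makes sense thanks to $\operatorname{char} \k \neq 2$. After the substitution, the scalar prefactors become $(u + \half - v)$ and $(u - \half - v)$ respectively, while the right-hand sides become $-H_{i-1}(u+\half)(E_i(u+\half) - E_i(v))$ and $-(E_i(u-\half) - E_i(v)) H_{i-1}(u+\half)$. Adding the two identities and dividing by $2$ replaces the prefactor with $(u-v)$ and produces exactly the symmetrised expression on the right of (\ref{hcor1}). The identity (\ref{hcor3}) is proved in an entirely parallel fashion: substitute $u \mapsto u - \half$ in (\ref{h3b}) and (\ref{h3}), then average.

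There is no real obstacle here; all three identities are formal consequences of Lemma~\ref{hlem}. The only point requiring attention is that the averaging step for the last two identities produces the denominator $2$, which is why the hypothesis $\operatorname{char}\k \neq 2$ must be imposed: both to make $\half$ a legitimate shift of the spectral parameter inside $H_{i\pm 1}(u)$ and to invert the factor of $2$ that arises when summing the two shifted relations. The first identity avoids this issue because it comes from a single relation in Lemma~\ref{hlem} rather than from averaging.
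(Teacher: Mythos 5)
Your proposal is correct, and for the first identity it takes a genuinely cleaner route than the paper. For \eqref{hcor1} and \eqref{hcor3} you do essentially what the paper does — average the matching pair of identities from Lemma~\ref{hlem} after a half-integer shift of the spectral parameter — except that you shift $u$ by $\pm\half$ where the paper's parenthetical speaks of shifting $v$; your version $u\mapsto u+\half$ applied to \eqref{h1} and \eqref{h1b} is the substitution that directly produces the displayed right-hand side of \eqref{hcor1} (the prefactors become $u-v+\half$ and $u-v-\half$, summing to $2(u-v)$), and likewise $u \mapsto u - \half$ for \eqref{hcor3}. The real divergence is \eqref{hcor0}: the paper obtains it in characteristic $\neq 2$ by averaging \eqref{h2} and \eqref{h2b} together with \eqref{stuffier}, and then has to supply a separate argument in characteristic $2$ (observing that \eqref{h2} forces $[H_i^{(r)},E_i^{(s)}]=0$ there). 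Your derivation — rewrite $(u-v-1)=(u-v)-1$ in \eqref{h2}, move $-[H_i(u),E_i(v)]$ across, and expand the commutator — involves no division by $2$ and is therefore valid uniformly in every characteristic, eliminating the case split entirely. Your closing remark correctly identifies why $\operatorname{char}\k\neq 2$ is needed only for the last two identities.
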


\begin{proof}
When $\operatorname{char}\k \neq 2$, these follow by averaging
the corresponding pairs identities from Lemma~\ref{hlem}, e.g. 
(\ref{hcor1}) is
$($(\ref{h1})+(\ref{h1b})$)/2$ with $v$ replaced by $v+\half$.
For (\ref{hcor0}), one also needs to use (\ref{stuffier}).
To establish (\ref{hcor0}) when $\operatorname{char}\k=2$, 
we observe by (\ref{h2}) that $\big[H_i^{(r)}, E_i^{(s)}\big] = 0$ for all
$r,s > 0$, which easily implies the desired identity.
\end{proof}

\subsection{The modular shifted Yangian}
 
Pick a shift matrix $\sigma$ 
as
in \textsection \ref{shiftsec}. 
The \emph{shifted Yangian}
is the subalgebra $\Ys \subseteq Y_n$ generated by the following elements:
\begin{multline}\label{e:shiftedgens}
\big\{D_i^{(r)} \:\big|\: 1\leq i \leq n, r > 0\big\}
  \cup \big\{E_{i}^{(r)}, F_i^{(s)} \:\big |\: 1\leq i < n, r >
  s_{i,i+1}, s >
  s_{i+1,i}\big\}
\end{multline}
Notice that when $\sigma$ is the zero matrix we have $Y_n(\sigma) =
Y_n$. Neither the PBW basis nor the centre of $\Ys$ can be described without introducing higher root elements, 
however the elements $E_{i,j}^{(r)} \in Y_n$ do not lie in the
subalgebra $\Ys$ for
a general shift matrix.
Instead, following \cite[(2.18)--(2.19)]{BK2}, 
we recursively define 
\begin{align}\label{e:EcommShift}
\sE_{i,j}^{(r)} &:=\left[\sE_{i,j-1}^{(r - s_{j-1, j})},
  E_{j-1}^{(s_{j-1, j} + 1)}\right],
&\sF_{i,j}^{(s)} &:= \left[F_{j-1}^{(s_{j, j-1}+ 1)}, \sF_{i, j-1}^{(s - s_{j, j-1})}\right]
\end{align}
for $1\leq i < j \leq
n$
and $r > s_{i,j}, s > s_{j,i}$.
As always, the filtration on $Y_n$ induces a filtration on its subalgebra
$Y_n(\sigma)$ so that $\gr Y_n(\sigma) \hookrightarrow \gr Y_n$.

\begin{Lemma}\label{L:sigmathesame}
For any shift matrix $\sigma$,
$1\leq i < j \leq n$, and $r  \geq s_{i,j}, s\geq s_{j,i}$, we have that
$\sE_{i,j}^{(r+1)} \in \F_r Y_n$ and $\sF_{i,j}^{(s+1)} \in \F_{s} Y_n$. Moreover,
recalling that $\gr Y_n$ is identified with $U(\g)$,
we have that 
\begin{align}\label{rivers}
\gr_{r} \sE_{i,j}^{(r+1)} &= e_{i,j}t^{r},
&
\gr_{s} \sF_{i,j}^{(s+1)} &= e_{j,i}t^{s}.
\end{align}
Hence, 
$\gr Y_n(\sigma)$ is identified
with the subalgebra $\Us$ of $U(\g)$.
\end{Lemma}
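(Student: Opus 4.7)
The plan is to prove the filtration and top-degree statements for $\sE_{i,j}^{(r+1)}$ by induction on $j-i$, and then to handle $\sF_{i,j}^{(s+1)}$ by an entirely symmetric argument. The base case $j = i+1$ is not really a recursion at all: here the convention is that $\sE_{i,i+1}^{(r)} = E_i^{(r)}$, and the statement that $\sE_{i,i+1}^{(r+1)} \in \F_r Y_n$ with $\gr_r \sE_{i,i+1}^{(r+1)} = e_{i,i+1}t^r$ is precisely a special case of (\ref{stuff}). Similarly, $\sF_{i,i+1}^{(s+1)} = F_i^{(s+1)}$ has top degree $e_{i+1,i} t^s$ by (\ref{stuff}).

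For the inductive step, fix $j - i \geq 2$ and assume the claim for $j-1$. The recursion (\ref{e:EcommShift}) reads
\[
\sE_{i,j}^{(r+1)} = \bigl[\sE_{i,j-1}^{((r-s_{j-1,j})+1)},\; E_{j-1}^{(s_{j-1,j}+1)}\bigr].
\]
Since $i < j-1 < j$, the shift-matrix axiom (\ref{e:shiftmatrix}) gives $s_{i,j-1} + s_{j-1,j} = s_{i,j}$, so the hypothesis $r \geq s_{i,j}$ forces $r - s_{j-1,j} \geq s_{i,j-1}$, which is exactly the condition needed to invoke the inductive hypothesis on $\sE_{i,j-1}$. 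We conclude that $\sE_{i,j-1}^{((r-s_{j-1,j})+1)} \in \F_{r-s_{j-1,j}} Y_n$ with top symbol $e_{i,j-1}t^{r-s_{j-1,j}}$, and from (\ref{stuff}) that $E_{j-1}^{(s_{j-1,j}+1)} \in \F_{s_{j-1,j}} Y_n$ with top symbol $e_{j-1,j}t^{s_{j-1,j}}$. From the proof of Lemma~\ref{L:loopPBW} we know that $[\F_a Y_n, \F_b Y_n] \subseteq \F_{a+b} Y_n$ and that under the identification $\gr Y_n \cong U(\g)$ the induced bracket on $\gr Y_n$ is the Lie bracket inherited from $\g$. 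Hence $\sE_{i,j}^{(r+1)} \in \F_r Y_n$ and
\[
\gr_r \sE_{i,j}^{(r+1)} = \bigl[e_{i,j-1}t^{r-s_{j-1,j}},\; e_{j-1,j}t^{s_{j-1,j}}\bigr] = e_{i,j}t^r
\]
by (\ref{e:multloop}), completing the inductive step. The argument for $\sF_{i,j}^{(s+1)}$ is identical after swapping the roles of the upper- and lower-triangular generators and using $s_{j,i} = s_{j,j-1}+s_{j-1,i}$ in place of the previous shift-matrix identity.

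For the final identification, note that $Y_n(\sigma)$ is generated as a filtered $\k$-algebra by the elements (\ref{e:shiftedgens}), and by (\ref{stuff}) each such generator has top-degree symbol lying in $\gs$. Consequently $\gr Y_n(\sigma)$ is contained in the subalgebra of $U(\g) = \gr Y_n$ generated by these symbols, namely $\Us$. Conversely, the symbols $\gr_r D_i^{(r+1)} = e_{i,i}t^r$, $\gr_r \sE_{i,j}^{(r+1)} = e_{i,j}t^r$ and $\gr_s \sF_{i,j}^{(s+1)} = e_{j,i}t^s$ (each an element of $\gr Y_n(\sigma)$, the first from (\ref{stuff}) and the latter two from (\ref{rivers})) together exhaust the spanning set (\ref{e:shiftcurrentspans}) for $\gs$, so they generate all of $\Us$ inside $\gr Y_n(\sigma)$. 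This gives the opposite inclusion $\Us \subseteq \gr Y_n(\sigma)$, and hence equality.

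There is no real obstacle beyond bookkeeping: the only delicate point is verifying that the particular shifts appearing in (\ref{e:EcommShift}) line up with the shift-matrix additivity (\ref{e:shiftmatrix}) so that the filtered degrees of the two commutator factors add up to exactly $r$, which is precisely what the definition of a shift matrix was designed to guarantee.
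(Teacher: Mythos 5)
Your proof is correct and follows essentially the same route as the paper's: induction on $j-i$ with base case from (\ref{stuff}), using the recursion (\ref{e:EcommShift}), the shift-matrix additivity (\ref{e:shiftmatrix}) to keep the indices admissible, and (\ref{e:multloop}) to compute the symbol of the commutator. Your treatment is in fact slightly more detailed than the paper's, which leaves the index bookkeeping and the final identification $\gr Y_n(\sigma) = U(\gs)$ largely implicit.
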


\begin{proof}
We just prove the statements about $\sE_{i,j}^{(r+1)}$.
When $j=i+1$ the result is immediate from (\ref{stuff}).
To deduce it in general, 
we proceed by induction on $j-i$. For the induction step,
$\sE_{i,j}^{(r+1)} = \left[\sE_{i,j-1}^{(r+1 - s_{j-1, j})},
  E_{j-1}^{(s_{j-1, j} + 1)}\right]$ so by induction it lies in
$\F_{r-s_{j-1,j}+s_{j-1,j}} Y_n$ as required.
Moreover, by induction again, its image in the associated graded
algebra
is
$$
[e_{i,j-1}t^{r-s_{j-1,j}}, e_{j-1,j} t^{s_{j-1,j}}] = 
e_{i,j} t^r
$$
using (\ref{e:multloop}).
\end{proof}

From this and the PBW
theorem for $\Us$, we also get the following PBW theorem for $Y_n(\sigma)$:

\begin{Theorem}
Ordered monomials
in the elements 
\begin{equation}
\big\{D_i^{(r)} \:\big|\: 1\leq i \leq n, r > 0\big\} \cup
\big\{E_{i,j}^{(r)}, F_{i,j}^{(s)}\:\big|\: 1\leq i < j \leq n, r >
s_{i,j}, s > s_{j,i}\big\}
\end{equation}
taken in any fixed ordering 
form a basis for $Y_n(\sigma)$.
\label{L:PBWshifted}
\end{Theorem}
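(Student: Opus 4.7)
The plan is to derive this PBW theorem directly from Lemma~\ref{L:sigmathesame}, which identifies $\gr Y_n(\sigma)$ with $U(\gs)$ as a subalgebra of $U(\g) = \gr Y_n$, combined with the classical PBW theorem for $U(\gs)$. The argument is essentially the same as for Theorem~\ref{L:loopPBW2}, but specialised to the shifted subalgebra and its identified associated graded.

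First I would address linear independence. Suppose some non-trivial finite $\k$-linear combination of the listed ordered monomials is zero in $Y_n(\sigma)$. Let $r$ be the maximum loop-filtration degree attained by a monomial appearing with non-zero coefficient, and apply $\gr_r$ to the relation, discarding lower-degree terms. By (\ref{stuff}) and (\ref{rivers}), each listed generator lies in the expected piece of the loop filtration, with graded symbol equal to a basis element of $\gs$: namely $e_{i,i}t^{r-1}$ for $D_i^{(r)}$, $e_{i,j}t^{r-1}$ for $\sE_{i,j}^{(r)}$, and $e_{j,i}t^{s-1}$ for $\sF_{i,j}^{(s)}$. Hence $\gr_r$ sends the surviving ordered monomials in $Y_n(\sigma)$ bijectively to ordered PBW monomials in the corresponding basis $\big\{e_{i,j}t^r \:\big|\: 1\leq i,j \leq n,\ r\geq s_{i,j}\big\}$ of $\gs$, so the result is a non-trivial dependence in $U(\gs)$, contradicting PBW for this Lie algebra.

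Second, for spanning I would induct on the loop-filtration degree $r$. The base case $r=0$ is immediate since $\F_0 Y_n(\sigma) = \k$. For the inductive step, given $x \in \F_r Y_n(\sigma)$, the image $\gr_r(x) \in \gr Y_n(\sigma) = U(\gs)$ can be expanded via classical PBW as a $\k$-linear combination of ordered monomials in the basis $\{e_{i,j}t^r\}$ of $\gs$. Lifting each such basis element to the corresponding generator in our list (incrementing the superscript by one) yields a $y\in \F_r Y_n(\sigma)$ which is a $\k$-linear combination of ordered monomials in the listed generators and satisfies $\gr_r(y) = \gr_r(x)$. Then $x - y \in \F_{r-1} Y_n(\sigma)$, to which the inductive hypothesis applies.

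There is no serious obstacle here; the entire substance of the theorem is packaged into Lemma~\ref{L:sigmathesame}. The only minor bookkeeping point is to ensure that the chosen fixed ordering on the generators of $Y_n(\sigma)$ transports under $\gr$ to a PBW ordering on the basis of $\gs$, which is automatic since the listed generators are in bijection (up to a shift of the superscript by one) with the chosen basis vectors of $\gs$.
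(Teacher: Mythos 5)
Your argument is exactly the paper's: the theorem is stated there as an immediate consequence of Lemma~\ref{L:sigmathesame} together with the PBW theorem for $U(\gs)$, and the standard graded-lifting argument you spell out (independence by applying $\gr_r$ to the top-degree part, spanning by induction on filtration degree) is the intended one. The only slip is the base case of your induction: $\F_0 Y_n(\sigma)$ is not $\k$ (it contains the degree-zero generators $D_i^{(1)}$, $\sE_{i,j}^{(s_{i,j}+1)}$, $\sF_{i,j}^{(s_{j,i}+1)}$, whose symbols span a copy of $\gl_n$ inside $(\gr Y_n(\sigma))_0$), but this is harmless since the inductive step already works at $r=0$ with the convention $\F_{-1}Y_n(\sigma)=\{0\}$.
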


We have defined $Y_n(\sigma)$ as a subalgebra of $Y_n$. It can also be
defined by generators and relations: the following theorem shows that
it has its own Drinfeld presentation.

\begin{Theorem}\label{drinfeldshift}
The shifted Yangian $\Ys$ is generated by the elements
\eqref{e:shiftedgens} 
subject to the
relations
(\ref{r2})--(\ref{rel16}), interpreting ``admissible $i,j,r,s,t$'' so
that the left hand sides of these relations only involve generators of $\Ys$.
\end{Theorem}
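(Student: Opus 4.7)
The plan is to mimic the proof of Theorem~\ref{drinpres} in the shifted setting. Let $\hYs$ denote the associative $\k$-algebra presented abstractly by the generators \eqref{e:shiftedgens} subject to the relations \eqref{r2}--\eqref{rel16}, with the natural convention that a relation is only imposed when all the symbols appearing in it are among the generators \eqref{e:shiftedgens}. Since these same relations hold in $Y_n$ among the corresponding elements, they certainly hold in the subalgebra $\Ys \subseteq Y_n$, so there is a well-defined surjective $\k$-algebra homomorphism $\theta:\hYs \twoheadrightarrow \Ys$ sending each generator of $\hYs$ to the element of $\Ys$ with the same name.

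To prove injectivity, I define higher root elements $\sE_{i,j}^{(r)}, \sF_{i,j}^{(r)} \in \hYs$ for $1 \leq i < j \leq n$, $r > s_{i,j}$, $s > s_{j,i}$ by the inductive formula \eqref{e:EcommShift}, starting from $\sE_{i,i+1}^{(r)} := E_i^{(r)}$ and $\sF_{i,i+1}^{(s)} := F_i^{(s)}$. I then install a filtration $\hYs = \bigcup_{r \geq 0} \F_r \hYs$ by declaring that $D_i^{(r+1)}$, $\sE_{i,j}^{(r+1)}$ and $\sF_{i,j}^{(r+1)}$ all sit in filtered degree $r$. By construction $\theta$ is a filtered map, and by Lemma~\ref{L:sigmathesame} there is an identification $\gr \Ys = \Us$ under which $\gr_r \theta$ sends the canonical image of $D_i^{(r+1)}$, $\sE_{i,j}^{(r+1)}$, $\sF_{i,j}^{(r+1)}$ to $e_{i,i}t^r$, $e_{i,j}t^r$, $e_{j,i}t^r$ respectively.

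The core step is to build a surjective graded homomorphism $\psi:\Us \twoheadrightarrow \gr \hYs$ sending $e_{i,i}t^r \mapsto \gr_r D_i^{(r+1)}$, $e_{i,j}t^r \mapsto \gr_r \sE_{i,j}^{(r+1)}$ for $i < j$, and $e_{j,i}t^r \mapsto \gr_r \sF_{i,j}^{(r+1)}$ for $i < j$, where for the last two the admissibility constraints $r \geq s_{i,j}$ and $r \geq s_{j,i}$ ensure that we are on generators of $\gs$ (see \eqref{e:shiftcurrentspans}). To see this is well defined, I must verify that the image of each Lie bracket relation \eqref{e:multloop} in $\gs$ holds in $\gr\hYs$. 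This reduces to the same six case analysis that appeared in the proof of Theorem~\ref{drinpres}, and in each case the only relations among the $D_i^{(r)}, E_i^{(r)}, F_i^{(r)}$ used are instances of \eqref{r2}--\eqref{rel16} applied to admissible indices that are already present in $\hYs$ by virtue of our shift constraints; the definition \eqref{e:EcommShift} of higher root vectors and the leading-term identities \eqref{rivers} are what make this work. Once $\psi$ is established, the composition $\gr\theta \circ \psi:\Us \to \gr\Ys = \Us$ is the identity on generators, hence an isomorphism, which forces $\psi$ and $\gr\theta$ to be isomorphisms, and consequently $\theta$ to be an isomorphism.

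The main obstacle is checking that \eqref{e:multloop} descends to $\gr\hYs$ on the root vector generators, because one has to ensure that every intermediate relation needed from Theorem~\ref{drinpres} survives the restriction to admissible indices imposed by $\sigma$. The delicate cases are those where in the unshifted proof one would use a commutator like $[E_{i,j-1}^{(r)}, E_{j-1}^{(s)}]$ with $s$ smaller than $s_{j-1,j}+1$; here one instead takes $s = s_{j-1,j}+1$, uses \eqref{e:EcommShift} as the definition of $\sE_{i,j}^{(r)}$, and checks that the corresponding associated-graded identity holds in $\Us$ thanks to \eqref{rivers}. The Serre-type relations \eqref{rel15}--\eqref{rel16}, essential in characteristic two as in Remark~\ref{differencerem}, cause no new trouble once all admissible indices are available, which they are because the constraints $r > s_{i,i+1}$ and $s > s_{i+1,i}$ are closed under the operations appearing in those relations.
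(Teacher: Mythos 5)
Your proposal is correct and follows essentially the same route as the paper: present $\hYs$ abstractly, define the shifted root elements via \eqref{e:EcommShift}, filter by $r-1$, build the surjection $\psi:\Us \twoheadrightarrow \gr\hYs$ by rerunning the six-case check from the proof of Theorem~\ref{drinpres}, and conclude from $(\gr\theta)\circ\psi = \id$ that $\theta$ is an isomorphism. The only difference is expository: you spell out the admissibility bookkeeping that the paper compresses into ``arguing in the same way as the paragraph following (\ref{hydra})''.
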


\begin{proof}
Let $\widehat{Y}_n(\sigma)$ be the algebra defined by these generators
and relations. Since all of the relations hold in $Y_n(\sigma)$, there
is a homomorphism $\theta:\widehat{Y}_n(\sigma)\rightarrow
Y_n(\sigma)$
taking the generators to the elements of $Y_n(\sigma)$ with the same name.
Introduce higher root elements 
$\sE_{i,j}^{(r)}$ and $\sF_{i,j}^{(r)}$ in $\widehat{Y}_n(\sigma)$ by repeating
(\ref{e:EcommShift}), so that $\theta$ takes these to the elements
of $Y_n(\sigma)$ the same name too.
Then define a filtration on
$\widehat{Y}_n(\sigma)$ by placing $D_i^{(r)}, \sE_{i,j}^{(r)}$ and
$\sF_{i,j}^{(r)}$ 
into filtered degree $r-1$.
Arguing in the same way as the paragraph following (\ref{hydra}), 
we see that there is a surjective graded algebra homomorphism
\begin{equation*}
\psi:\Us \twoheadrightarrow \gr \widehat{Y}_n(\sigma),
\quad
e_{i,j} t^{r} \mapsto \left\{
\begin{array}{ll}
\gr_{r} D_i^{(r+1)}&\text{if $i=j$,}\\
\gr_{r} E_{i,j}^{(r+1)}&\text{if $i<j$,}\\
\gr_{r} F_{j,i}^{(r+1)}&\text{if $i>j$,}
\end{array}\right.
\end{equation*}
for $1 \leq i,j \leq n$ and $r \geq s_{i,j}$.
Finally, we observe that $\theta$ is filtered, and 
$$(\gr \theta) \circ \psi:\Us\rightarrow \gr Y_n(\sigma)
$$ 
is the identity by Lemma~\ref{L:sigmathesame}.
This implies that 
$\gr \theta$  is an isomorphism, hence, so is $\theta$.
\end{proof}

\subsection{Automorphisms}\label{saut}
In the next section, we will also need to exploit various
automorphisms/isomorphisms of Yangians and shifted Yangians.
We briefly list the ones that we need below. In all cases, existence
follows easily from the defining relations; see also
\cite[\textsection 2]{BK1}
and \cite[(2.16)--(2.17)]{BK2}.
\begin{enumerate}
\item(``Translation'') 
For $c \in \k$, there is an automorphism $\eta_c:Y_n \rightarrow Y_n$ defined from
$\eta_c(T_{i,j}(u)) = T_{i,j}(u-c)$, i.e. $\eta_c(T_{i,j}^{(r)}) =
\sum_{s=1}^{r} \binom{r-1}{r-s} c^{r-s} T_{i,j}^{(s)}$.
In terms of Drinfeld generators, $\eta_c$ sends $D_i(u) \mapsto D_i(u-c), E_{i,j}(u) \mapsto
E_{i,j}(u-c)$ and $F_{i,j}(u) \mapsto F_{i,j}(u-c)$, from which one
sees that $\eta_c$ does not leave $Y_n(\sigma)$ invariant in general.
\item(``Multiplication by a power series'')
For any power series $f(u) \in 1+u^{-1} \k[[u^{-1}]]$, there
is an automorphism $\mu_f:Y_n \rightarrow Y_n$
defined from $\mu_f(T_{i,j}(u)) = f(u) T_{i,j}(u)$,
i.e.
$\mu_f(T_{i,j}^{(r)}) = \sum_{s=0}^r a_s T_{i,j}^{(r-s)}$ if $f(u) =
\sum_{s \geq 0} a_s u^{-s}$.
On Drinfeld generators, we have that $\mu_f(D_i(u)) = f(u) D_i(u)$,
$\mu_f(E_{i}(u))  = E_i(u)$ and $\mu_f(F_i(u)) = F_i(u)$.
So this time $\mu_f$ restricts to an automorphism of each shifted
Yangian $Y_n(\sigma)$ fixing the off-diagonal generators (\ref{e:EcommShift}).
\item(``Transposition'')
Let $\tau:Y_n \rightarrow Y_n$ be the anti-automorphism
defined by $\tau(T_{i,j}^{(r)}) := T_{j,i}^{(r)}$ or, 
on Drinfeld generators, 
$\tau(D_i^{(r)}) = D_i^{(r)},
\tau(E_{i,j}^{(r)}) = F_{i,j}^{(r)},
\tau(F_{i,j}^{(r)}) = E_{i,j}^{(r)}$.
This restricts to define an anti-isomorphism $\tau:Y_n(\sigma) \rightarrow
Y_n(\sigma^T)$ between shifted Yangians,
where $\sigma^T$ is the transposed shift matrix.
\item(``Change of shift matrix'')
Suppose that $\sigma$ is a shift matrix as usual and 
$\dsigma = (\ds_{i,j})_{1\leq i,j\leq n}$ is another shift matrix satisfying
$s_{i,i+1} + s_{i+1, i} = \ds_{i,i+1} + \ds_{i+1, i}$
for all $1\leq i < n$. Then, as a consequence of Theorem~\ref{drinfeldshift},
there is an isomorphism $\iota:\Ys \isoto Y_n(\dsigma)$,
$D_i^{(r)} \mapsto D_i^{(r)}$, $\sE_{i,j}^{(r)} \mapsto
\dsE_{i,j}^{(r - s_{i,j} + \ds_{i,j})}$,
$\sF_{i,j}^{(r)} \mapsto \dsF_{i,j}^{(r - s_{j,i} + \ds_{j,i})}$.
\item(``Permutation'')
For each $w \in \S_n$, there is an automorphism
$w:Y_n \rightarrow Y_n$ sending $T_{i,j}^{(r)} \mapsto T_{w(i),
  w(j)}^{(r)}$. This is clear from the RTT presentation.
It does not leave the subalgebra $Y_n(\sigma)$ invariant in general.
\end{enumerate}

\begin{Lemma}\label{rl1}
For any $1 \leq i < j \leq n$, the permutation automorphism of $Y_n$
defined by the transposition $(i+1\:\:j)$ 
maps $E_i(u) \mapsto E_{i,j}(u)$ and $F_i(u) \mapsto F_{i,j}(u)$.
\end{Lemma}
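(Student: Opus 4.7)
The plan is to read off the result directly from the quasideterminant formulas \eqref{qd1}--\eqref{qd3}, exploiting the fact that the transposition $w = (i+1\:\:j)$ fixes every index in $\{1,\dots,i\}$ and swaps only $i+1 \leftrightarrow j$. Since $w$ is an algebra automorphism sending $T_{k,l}(u) \mapsto T_{w(k),w(l)}(u)$, it commutes with the formation of quasideterminants, so it suffices to track how $w$ acts on the entries of each quasideterminant.

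First I would observe that $D_i(u)$ is a quasideterminant involving only entries $T_{k,l}(u)$ with $1 \leq k,l \leq i$. Every such index lies outside $\{i+1,j\}$, so $w$ fixes each entry and hence $w(D_i(u)) = D_i(u)$; therefore $w(\widetilde D_i(u)) = \widetilde D_i(u)$ as well. Next, in the quasideterminant formula \eqref{qd2} for $E_{i,i+1}(u) = E_i(u)$, the first $i-1$ columns only involve entries $T_{k,l}(u)$ with $k,l \leq i$ and are thus pointwise fixed by $w$; the final column has entries $T_{1,i+1}(u),\dots,T_{i,i+1}(u)$, whose row indices are fixed while the column index $i+1$ gets sent to $j$. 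Thus $w$ transforms this final column entrywise into $T_{1,j}(u),\dots,T_{i,j}(u)$, and the whole expression becomes the quasideterminant defining $E_{i,j}(u)$ in \eqref{qd2}. Combining with $w(\widetilde D_i(u)) = \widetilde D_i(u)$, we get $w(E_i(u)) = E_{i,j}(u)$.

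The proof that $w(F_i(u)) = F_{i,j}(u)$ is symmetric: in \eqref{qd3} for $F_{i,i+1}(u) = F_i(u)$, the top $(i-1)\times i$ block is fixed by $w$, while the last row has entries $T_{i+1,1}(u),\dots,T_{i+1,i}(u)$; under $w$ the row index $i+1$ becomes $j$ and the column indices $\leq i$ are unchanged, yielding the last row $T_{j,1}(u),\dots,T_{j,i}(u)$ of the quasideterminant for $F_{i,j}(u)$. There is no real obstacle here; the only thing to verify carefully is that no index in the range $\{1,\dots,i\}$ or in the outermost $\widetilde D_i(u)$ factor gets disturbed by $w$, which is clear because $w$ only moves the indices $i+1$ and $j$.
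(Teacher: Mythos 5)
Your proof is correct and follows exactly the route the paper takes: the paper's own proof simply notes that the claim follows from the quasideterminant formulas \eqref{qd1}--\eqref{qd3}, and your write-up supplies precisely the entry-by-entry bookkeeping (the transposition $(i+1\:\:j)$ fixes all indices $\leq i$, hence fixes $D_i(u)$ and $\widetilde D_i(u)$, and only relabels the last column of \eqref{qd2}, resp.\ last row of \eqref{qd3}) that the paper leaves implicit.
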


\begin{proof}
Since $E_i(u) = E_{i,i+1}(u)$ and $F_i(u) = F_{i,i+1}(u)$,
this follows
from (\ref{qd1})--(\ref{qd3}).
\end{proof}

\section{Centres of $Y_n$ and $\Ys$}\label{S:thecentre}

In this section, 
we describe the centre of the modular shifted Yangian, giving precise formulas for the generators.
Unlike the previous section, most of the results presented here are
not analogues of statements regarding the shifted Yangian defined over
the complex numbers.

\subsection{Harish--Chandra centre}\label{s:HCinv}
We define a power series by the rule
\begin{eqnarray}\label{Cdef}
C(u) = \sum_{r\geq 0} C^{(r)} u^{-r} := 
D_1(u)D_2(u-1)\cdots D_n(u-n+1) \in Y_n[[u^{-1}]].
\end{eqnarray}
The algebra generated by the coefficients $\big\{C^{(r)} \:\big|\: r> 0\big\}$
will be denoted $Z_\HC(Y_n)$. We call it the {\it Harish-Chandra centre} of $Y_n$.
The following theorem shows that the associated graded algebra 
$\gr Z_{\HC}(Y_n)$ is identified with
$\k[z_r\:|\:r
\geq 0] \subseteq Z(\g)$.

\begin{Theorem}\label{HCgenerators}
The elements $C^{(r)}$ lie in the centre of $Y_n$. 
Furthermore, we have that $C^{(r+1)} \in \F_{r} Y_n$ and
\begin{equation}
\label{e:ctops}
\gr_r C^{(r+1)}  = z_r\in U(\g).
\end{equation}
Hence, $C^{(1)}, C^{(2)}, C^{(3)}, \dots$ are algebraically independent.
\end{Theorem}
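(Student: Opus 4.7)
The plan is to verify the three assertions of the theorem in order, working throughout in the Drinfeld presentation (Theorem~\ref{drinpres}).

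For centrality, since $Y_n$ is generated by the series $D_j(v)$, $E_i(v)$, $F_i(v)$, it suffices to show $C(u)$ commutes with each. Commutation with every $D_j(v)$ is immediate from (\ref{r2}). For $E_i(v)$, relation (\ref{r4}) shows that $E_i(v)$ commutes with $D_j(w)$ whenever $j \notin \{i,i+1\}$, so after substituting $w := u-i+1$ it suffices to establish the power-series identity
\[
\bigl[E_i(v),\,D_i(w)\,D_{i+1}(w-1)\bigr] = 0.
\]
I would prove this by expanding via the Leibniz rule and multiplying through by $(w-v)(w-1-v)$ to clear the denominators that appear in (\ref{prel1}) and (\ref{prel2}). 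The key reduction is the specialization of (\ref{prel2}) at $v=w-1$, which yields $E_i(w) D_{i+1}(w-1) = D_{i+1}(w-1)E_i(w-1)$; using this to move an $E_i(w)$-factor past $D_{i+1}(w-1)$ collapses the two Leibniz terms into a single expression of the form $D_i(w)D_{i+1}(w-1)\bigl(E_i(w-1)-E_i(v)\bigr)$ times $\bigl((v-w+1)+(w-v-1)\bigr)=0$. Commutation with $F_i(v)$ then follows for free from the transposition anti-automorphism $\tau$ of \textsection\ref{saut}(iii), which satisfies $\tau(D_i^{(r)})=D_i^{(r)}$ (hence $\tau(C(u))=C(u)$ since all the $D_i$'s commute) and swaps $E_i \leftrightarrow F_i$.

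For the filtration and leading-term claim, I would apply Lemma~\ref{L:type2} directly, taking $A_{\rm II} := Y_n$, $X_i^{(r)} := D_i^{(r)}$, and the sequence $d_0 := 0$, $d_r := r-1$ for $r \geq 1$. The hypotheses are satisfied: $D_i^{(r)} \in \F_{r-1}Y_n$ by (\ref{stuff}), and the elements commute by (\ref{r2}). Optimality of every $r>1$ in the sense of \textsection\ref{S:typeII} is elementary, since here $d_\mu = |\mu|-\ell(\mu)$, which is at most $r-2$ whenever $|\mu| < r$ or ($|\mu|=r$ and $\ell(\mu)>1$). Lemma~\ref{L:type2} then delivers both $C^{(r+1)} \in \F_r Y_n$ and the congruence $C^{(r+1)} \equiv D_1^{(r+1)}+\cdots+D_n^{(r+1)} \pmod{\F_{r-1}Y_n}$ (the case $r=0$ is the $r=1$ clause of the lemma). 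Applying $\gr_r$ and invoking (\ref{stuff}) then gives $\gr_r C^{(r+1)} = e_{1,1}t^r+\cdots+e_{n,n}t^r = z_r$, which is (\ref{e:ctops}).

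Finally, algebraic independence follows at once from (\ref{e:ctops}): the elements $z_0, z_1, z_2, \dots$ generate a free polynomial subalgebra of $U(\g) = \gr Y_n$ (as noted after \eqref{filt1}, this follows from the PBW theorem for $\g$), so any nontrivial polynomial relation among the $C^{(r+1)}$ would, after passing to the highest-degree component in the loop filtration, produce a nontrivial polynomial relation among the $z_r$, a contradiction.

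I expect the central technical obstacle to be the first step: verifying $[E_i(v),D_i(w)D_{i+1}(w-1)]=0$ requires careful bookkeeping of the rational functions of $v$ and $w$ arising from (\ref{prel1})--(\ref{prel2}), and correctly identifying the specialization that makes the two Leibniz terms cancel. Once that identity is in hand, parts (2) and (3) are nearly automatic applications of the type-II machinery of Section~\ref{S:powerseries} and the PBW theorem.
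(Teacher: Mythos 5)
Your proposal is correct and follows essentially the same route as the paper: reduce centrality to commutation with the $E_i(v)$ via (\ref{r2}) and the anti-automorphism $\tau$, verify it using (\ref{prel1})--(\ref{prel2}) (the paper simply cites the analogous computation in [BK1, Theorem 7.2], which is the calculation you sketch), obtain the filtration and leading-term claims from Lemma~\ref{L:type2} with $X_i^{(r)} := D_i^{(r)}$ and $d_r = r-1$, and deduce algebraic independence from that of the $z_r$ in $\gr Y_n$. Your direct verification of optimality replaces the paper's appeal to Lemma~\ref{L:optlemma} with $m=1$, but this is an inessential variation.
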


\begin{proof}
To prove that $C^{(r)}$ is central, using (\ref{r2}) and the
anti-automorphism $\tau$ 
from \textsection \ref{saut}, we are reduced to checking that
$[E_i^{(r)}, C^{(s)}] = 0$ for all $i, r, s$. 
This can be proven in the same manner as \cite[Theorem~7.2]{BK1} using
the power series relations
(\ref{prel1}) and (\ref{prel2}). 
The second claim is noted in the proof of \cite[Theorem~7.2]{BK1}.
It can also be deduced using Lemma~\ref{L:type2},
taking $A_{\rm II} := Y_n$ and
$X_i^{(r)} := D_i^{(r)}$ so that $d_r = r-1$ for $r > 0$, and noting that
every $r > 1$ is optimal in this situation 
by Lemma~\ref{L:optlemma} with $m = 1$.
The final assertion follows because $z_0,z_1,\dots$ are algebraically
independent in $\gr Y_n$.
\end{proof}

Notice also that $Z_\HC(Y_n) \subseteq Z(\Ys)$ for any choice of shift matrix
and so we may also denote it $Z_\HC(\Ys)$ and call it the {\it
  Harish-Chandra centre} of $\Ys$.

\subsection{\boldmath Off-diagonal $p$-central elements}
We are ready to exhibit our first $p$-central elements.
In this subsection, we investigate the ones that lie in the ``root subalgebras''
$Y^\pm_{i,j} \subset Y_n$
for $1 \leq i < j \leq n$, that is, the subalgebras generated by
$\big\{E_{i,j}^{(r)}\:\big|\:r > 0\big\}$
and
$\big\{F_{i,j}^{(r)}\:\big|\:r > 0\big\}$, respectively.
In fact, we will give two different expressions for central
elements in $Y^\pm_{i,j}$ in the first two lemmas. The first of these involving power series produces
more complicated central elements, but this form often seems to be useful in practice.

\begin{Lemma}\label{heat}
For $1 \leq i< j \leq n$, all
coefficients in the power series $(E_{i,j}(u))^p$ and $(F_{i,j}(u))^p$
belong to $Z(Y_n)$.
\end{Lemma}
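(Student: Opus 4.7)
The plan is to reduce the statement to showing that every coefficient of $E_i(u)^p$ lies in $Z(Y_n)$ in the simple-root case $E_i = E_{i,i+1}$, and then to verify this by proving that the derivation $(\ad E_i(u))^p$ annihilates every Drinfeld generator of $Y_n$. Working in the ring $Y_n[[u^{-1},v^{-1}]][u,v]$ and its natural localisations, Lemma~\ref{hop} gives $(\ad E_i(u))^p = \ad(E_i(u)^p)$, so such a vanishing statement implies $[E_i(u)^p, X]=0$ for every generator $X$, hence centrality of every coefficient of $E_i(u)^p$.

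First I make two reductions. The transposition anti-automorphism $\tau$ of \textsection\ref{saut}(3) swaps $E_{i,j}(u)$ with $F_{i,j}(u)$ and preserves $Z(Y_n)$, so the statement for $F_{i,j}$ follows from that for $E_{i,j}$. Second, by Lemma~\ref{rl1} the permutation automorphism attached to $(i+1\;j) \in \S_n$ maps $E_i(u)$ to $E_{i,j}(u)$ and also preserves $Z(Y_n)$, allowing me to reduce to $j=i+1$.

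Set $\delta := E_i(v)-E_i(u)$ and $\varepsilon := \delta/(u-v)$. Then (\ref{prel4}) gives $[E_i(u), \delta]=\delta^2/(u-v)$, whence Lemma~\ref{le}(\ref{new1}) yields $[E_i(u),\delta^m] = m\delta^{m+1}/(u-v)$, and consequently $[E_i(u),\varepsilon^m] = m\varepsilon^{m+1}$. Combined with (\ref{prel1}) and (\ref{prel2}), straightforward Leibniz inductions produce closed formulas $(\ad E_i(u))^2(D_i(v)) = 0$, $(\ad E_i(u))^k(D_{i+1}(v)) = k!\, D_{i+1}(v)\varepsilon^k$, and $(\ad E_i(u))^k(E_i(v)) = k!\,\delta^{k+1}/(u-v)^k$, each of which vanishes at $k=p$ since $p! \equiv 0 \pmod p$. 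For $D_j(v), E_j(v), F_j(v)$ with indices not adjacent to $i$, the commutator is already zero by (\ref{r2}), (\ref{r3}), (\ref{r10}). For $X = E_{i\pm 1}(v)$, relations (\ref{rel15}) and (\ref{r12}) combine to force $(\ad E_i(u))^2(X)=0$: in the double sum $\sum_{r_1,r_2}\bigl[E_i^{(r_1)}, [E_i^{(r_2)}, E_{i\pm 1}^{(t)}]\bigr]u^{-r_1-r_2}$ the diagonal terms $r_1=r_2$ vanish by (\ref{rel15}), while the off-diagonal pairs $(r_1,r_2), (r_2,r_1)$ cancel by (\ref{r12}), a cancellation which survives even in characteristic $2$.

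The main obstacle is the case $X = F_i(v)$. Here (\ref{prel3}) gives $[E_i(u), F_i(v)] = (H_i(v)-H_i(u))/(u-v)$, so I must iterate $\ad E_i(u)$ on $H_i$ at two different arguments. Introducing an auxiliary variable $w$, Lemma~\ref{hlem}(\ref{h2}) yields $[E_i(u), H_i(w)] = 2H_i(w)\delta_w\mu_w$ with $\delta_w := E_i(u)-E_i(w-1)$ and $\mu_w := (w-u-1)^{-1}$, while (\ref{prel4}) gives $[E_i(u),\delta_w] = \delta_w^2\mu_w$. A Leibniz induction, patterned exactly after the $D_{i+1}$ calculation, then produces $(\ad E_i(u))^k(H_i(w)) = (k+1)!\, H_i(w)\delta_w^k\mu_w^k$, which vanishes at $k=p-1$. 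Applying this formula with $w=v$ and with $w$ specialised to $u$ (permissible since $\mu_u = -1$ is regular), I conclude $(\ad E_i(u))^p(F_i(v)) = (u-v)^{-1}\bigl[(\ad E_i(u))^{p-1}H_i(v) - (\ad E_i(u))^{p-1}H_i(u)\bigr] = 0$, completing the verification.
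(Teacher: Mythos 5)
Your proposal is correct and follows essentially the same strategy as the paper's proof: reduce via $\tau$ and Lemma~\ref{rl1} to the simple-root series $E_i(u)$, then show $(\ad\, E_i(u))^p$ annihilates all Drinfeld generators by Leibniz inductions that terminate in a factor of $p!=0$. The only variations are minor: for $E_{i\pm 1}(v)$ you argue at coefficient level from (\ref{rel15}) and (\ref{r12}) where the paper invokes the power-series identity (\ref{star}), and for $F_i(v)$ you iterate $\ad\, E_i(u)$ on $H_i(w)$ using (\ref{h2}) where the paper iterates on $D_{i+1}(v)\widetilde{D}_i(v)$ using (\ref{new4}) --- both arguments produce $(\ad\, E_i(u))^{p-1}$ of the relevant series as $p!\cdot(\cdots)=0$ and then specialise $v=u$ via (\ref{prel3}). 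One trivial slip: $[D_j(v),E_i(u)]=0$ for $j\notin\{i,i+1\}$ follows from (\ref{r4}), not (\ref{r2}).
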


\begin{proof}
Using Lemma~\ref{rl1} plus the anti-automorphism
$\tau$, the proof of this reduces
to checking that the coefficients of $(E_i(u))^p$ are central in $Y_n$
for each $i=1,\dots,n-1$. To prove this, 
using Lemma~\ref{hop}, it suffices to establish
the following identities.
In $Y_n[[u^{-1}, v^{-1}]]$ for all admissible $j$:
\begin{align}
(\ad\,E_i(u))^p (E_j(v)) &= 0,\label{uo1}\\
(\ad\,E_i(u))^p(D_j(v)) &=0,\label{uo2}\\
(\ad\,E_i(u))^p(F_j(v)) &= 0.\label{uo3}
\end{align}

In this paragraph we check (\ref{uo1}).
To show that $[E_i(u)^p, E_i(v)] = 0$, we use
(\ref{prel4}) and (\ref{new1}) repeatedly:
\begin{align*}
(u-v)^p(\ad\, E_i(u))^{p-1}([E_i(u), E_i(v)])
&=
(u-v)^{p-1} (\ad\, E_i(u))^{p-1} ((E_i(v)-E_i(u))^2)\\
&=(u-v)^{p-2} (\ad\, E_i(u))^{p-2} (2(E_i(v)-E_i(u))^3)\\
&= \cdots = p! (E_i(v)-E_i(u))^{p+1} = 0.
\end{align*}
Dividing by $(u-v)^p$ (which we may do since
$Y_n[[u^{-1}, v^{-1}]][u,v]$ has no zero divisors) gives the desired identity.
To see that $(\ad\, E_i(u))^p (E_j(v)) = 0$ when $|i-j|=1$, 
we actually already have that $(\ad\,E_i(u))^2(E_j(v)) = 0$ by (\ref{star}).
Finally, when $|i-j|> 1$, the identity is clear because
$[E_i(u), E_j(v)] = 0$ by (\ref{r10}).

For (\ref{uo2}), it is immediate from (\ref{r4}) if $j < i$ or $j >
i+1$.
For the case $j=i$, we have by (\ref{prel1}) and (\ref{new2}) with
$k=1$ that $(u-v)^2 (\ad\,E_i(u))^2 (D_i(v)) = 0$.
Hence, on dividing by $(u-v)^2$, we get that
$(\ad\,E_i(u))^p (D_i(v)) = 0$.
Finally, when $j=i+1$, we use (\ref{prel2}) and (\ref{new3})
repeatedly:
\begin{align*}
(u-v)^p(\ad\, E_i(u))^{p}(D_{i+1}(v))
&=
(u-v)^{p-1} (\ad\, E_i(u))^{p-1} (D_{i+1}(v)(E_i(v)-E_i(u)))\\
&=(u-v)^{p-2} (\ad\, E_i(u))^{p-2} (2(E_i(v)-E_i(u))^2)\\
&= \cdots = p! (E_i(v)-E_i(u))^{p} = 0.
\end{align*}
Dividing by $(u-v)^p$ completes the proof of (\ref{uo2}).

Finally, for (\ref{uo3}), it follows when $i \neq j$ 
immediately from (\ref{r3}). When $i=j$, 
we observe using (\ref{new4}) repeatedly that
\begin{align*}
(u-v)^{p-1}(\ad\, E_i(u))^{p-1}&(D_{i+1}(v)\widetilde D_i(v))\\&=
(u-v)^{p-2}(\ad\, E_i(u))^{p-2}(2 D_{i+1}(v)(E_i(v)-E_i(u))\widetilde
D_i(v)) \\
&=\cdots=
p! D_{i+1}(v)(E_i(v)-E_i(u))^{p-1}\widetilde
D_i(v) = 0.
\end{align*}
Hence, 
$(\ad\, E_i(u))^{p-1}(D_{i+1}(v) \widetilde D_i(v)) = 0$. We
can also set $v = u$ in this identity to see that
$(\ad\, E_i(u))^{p-1}(D_{i+1}(u) \widetilde D_i(u)) = 0$.
Then using  
(\ref{prel3}) we get the conclusion:
\begin{align*}
(u-v) (\ad\, E_i(u))^{p}(F_i(v)) = 
 (\ad\, E_i(u))^{p-1}(D_{i+1}(u) \widetilde D_i(u) -
D_{i+1}(v)\widetilde D_i(v)) = 0.
\end{align*}
\end{proof}

\begin{Lemma}\label{alem}
For $1 \leq i < j \leq n$ and $r > 0$, 
we have that $\big(E_{i,j}^{(r)}\big)^p, \big(F_{i,j}^{(r)}\big)^p \in Z(Y_n)$.
\end{Lemma}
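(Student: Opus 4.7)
The plan is to reduce via symmetries of $Y_n$ to showing $\big(E_i^{(r)}\big)^p \in Z(Y_n)$ for $1 \le i < n$ and $r > 0$, then to verify this by checking that the derivation $(\ad E_i^{(r)})^p$ annihilates each Drinfeld generator of $Y_n$. For the reduction: the permutation automorphism of Lemma~\ref{rl1} associated to the transposition $(i+1\:\:j)$ sends $E_i^{(r)}$ to $E_{i,j}^{(r)}$, and the anti-automorphism $\tau$ from \textsection\ref{saut} interchanges $E$- and $F$-generators; both preserve $Z(Y_n)$, so centrality of $\big(E_i^{(r)}\big)^p$ implies that of both $\big(E_{i,j}^{(r)}\big)^p$ and $\big(F_{i,j}^{(r)}\big)^p$.

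By Lemma~\ref{hop}, $\ad\big((E_i^{(r)})^p\big) = (\ad E_i^{(r)})^p$, and this is an inner adjoint, hence a derivation of $Y_n$. So it suffices to show it vanishes on each generator $D_j^{(s)}, E_j^{(s)}, F_j^{(s)}$ from Theorem~\ref{drinpres}. The cases with $|i-j| > 1$ are immediate because the first commutator already vanishes by \eqref{r10}, \eqref{r4}, or \eqref{r3}. When $E_j^{(s)}$ is attacked with $|i-j| = 1$, relation \eqref{rel15} forces $(\ad E_i^{(r)})^2(E_j^{(s)}) = 0$ outright.

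The four remaining cases are handled by iterating the identities collected in Lemma~\ref{lf}, in each case until the accumulated scalar prefactor becomes a multiple of $p! \equiv 0 \pmod{p}$. For $(\ad E_i^{(r)})^p(E_i^{(s)})$, one splits into the regimes $s \ge r$ and $s \le r-1$ and iterates \eqref{epic1} or \eqref{epic2} respectively, producing successive prefactors $\pm 1, \pm 2, \ldots, \pm p$ over $p$ steps. For $(\ad E_i^{(r)})^p(D_i^{(s)})$, a single application of \eqref{epic3} with $m = 0$ sets up the $m = 1$ input shape, and the next application carries prefactor $(m-1)\big|_{m=1} = 0$, so the operator already vanishes after two steps. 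For $(\ad E_i^{(r)})^p(D_{i+1}^{(s)})$, iteration of \eqref{epic4} with $m = 0, 1, \ldots, p-1$ produces the prefactor $1 \cdot 2 \cdots p = p!$. Finally, for $F_i^{(s)}$, the initial bracket from \eqref{r3} has the form $-\sum_{t+u = r+s-1} D_{i+1}^{(t)} \widetilde{D}_i^{(u)}$, matching the $m = 0$ input of \eqref{epic5}; iterating \eqref{epic5} with $m = 0, 1, \ldots, p-2$ accumulates the prefactor $2 \cdot 3 \cdots p = p!$.

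The main obstacle is not conceptual but organisational: at each iterative step one must verify that the shape of the intermediate expression (the range constraints on the $s_i$ and the relative positions of the $D_{i+1}, E_i$, and $\widetilde{D}_i$ factors) matches precisely the input of the next application of Lemma~\ref{lf}. Once this bookkeeping is confirmed, the entire argument reduces to the clean observation that these prefactors multiply to $\pm p!$ after $p$ iterations, and hence vanish in characteristic~$p$.
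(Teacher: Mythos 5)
Your proposal is correct and follows essentially the same route as the paper: reduce via the permutation automorphism of Lemma~\ref{rl1}, the anti-automorphism $\tau$, and Lemma~\ref{hop} to showing $(\ad E_i^{(r)})^p$ kills each Drinfeld generator, dispose of the easy cases via \eqref{r10}, \eqref{r4}, \eqref{r3} and \eqref{rel15}, and iterate the identities of Lemma~\ref{lf} so that the accumulated scalar becomes $\pm p!=0$ (the paper writes out only the \eqref{epic2} case explicitly and asserts the rest are similar). Your identification of the initial shapes — \eqref{r4} giving the $m=1$ input of \eqref{epic3}, \eqref{r3} giving the $m=0$ input of \eqref{epic5} with $s$ replaced by $r+s-1$ — and the early termination at prefactor $0$ for $D_i^{(s)}$ all match the intended argument.
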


\begin{proof}
Using Lemmas~\ref{rl1} and \ref{hop}, this reduces to checking 
\begin{align}
\big(\ad\, E_i^{(r)}\big)^p\left(E_j^{(s)}\right)&= 0,\label{a1}\\
\big(\ad\, E_i^{(r)}\big)^p\left(D_j^{(s)}\right) &= 0,\label{a2}\\
\big(\ad\, E_i^{(r)}\big)^p\left(F_j^{(s)}\right) &= 0.\label{a3}
\end{align}
These may all be proved in a very similar way to
(\ref{uo1})--(\ref{uo3}), using the identities from Lemma~\ref{lf}
in place of the ones from Lemma~\ref{le} used before, and also
(\ref{r3}) and (\ref{rel15}).
For example, to prove (\ref{a1}) when $i=j$, we use (\ref{epic1}) if
$s \geq r$ or (\ref{epic2}) if $s \leq r-1$,
taking $m=1,2,\dots,p$ in turn. Here is the calculation in the latter
case:
\begin{align*}
\big(\ad\, E_i^{(r)}\big)^p\left(E_i^{(s)}\right)
&=
-\big(\ad\, E_i^{(r)}\big)^{p-1}\left(\sum_{\substack{s_1,s_2 \leq r-1 \\ s_1+s_2 =
    r-1+s}}
E_i^{(s_1)}E_i^{(s_2)}\right)\\
&=2\big(\ad\, E_i^{(r)}\big)^{p-2}\left(\sum_{\substack{s_1,s_2,s_3 \leq r-1 \\ s_1+s_2+s_3 =
    2(r-1)+s}}
E_i^{(s_1)}E_i^{(s_2)}E_i^{(s_3)}\right)\\
&=\cdots=
(-1)^p p! \sum_{\substack{s_1,\dots,s_{p+1} \leq r-1 \\ s_1+\cdots+s_{p+1} =
    p(r-1)+s}}
E_i^{(s_1)}\cdots E_i^{(s_{p+1})} = 0.
\end{align*}
\end{proof}

For use in the next theorem, we let
\begin{align}\label{pq}
P_{i,j}(u) &= \sum_{r \geq p} P_{i,j}^{(r)}u^{-r} := E_{i,j}(u)^p,&
Q_{i,j}(u) &= \sum_{r \geq p} Q_{i,j}^{(r)}u^{-r} := F_{i,j}(u)^p.
\end{align}

\begin{Theorem}\label{Z1}
For $1 \leq i < j \leq n$, the algebras
$Z(Y_n) \cap Y_{i,j}^+$ and
$Z(Y_n) \cap Y_{i,j}^-$
are infinite rank polynomial algebras freely generated by the central elements
$\Big\{\big(E_{i,j}^{(r)}\big)^p\:\Big|\:r > 0\Big\}$ and
$\Big\{\big(F_{i,j}^{(r)}\big)^p\:\Big|\:r > 0 \Big\}$, respectively.
We have that
$\big(E_{i,j}^{(r)}\big)^p,
\big(F_{i,j}^{(r)}\big)^p
 \in \F_{rp-p} Y_n$
and 
\begin{equation}\label{snore}
\gr_{rp-p} 
\big(E_{i,j}^{(r)}\big)^p
= \big(e_{i,j} t^{r-1}\big)^p,
\qquad
\gr_{rp-p} 
\big(F_{i,j}^{(r)}\big)^p
= \big(e_{j,i} t^{r-1}\big)^p.
\end{equation}
For $r \geq p$ we have
that
\begin{equation}\label{kaz}
P_{i,j}^{(r)} = \left\{
\begin{array}{ll}
\left(E_{i,j}^{(r/p)}\right)^p+(*)&\text{if $p \mid r$,}\\
(*)&\text{if $p \nmid r$,}
\end{array}
\right.
\end{equation}
where $(*)\in
 \F_{r-p-1} Y_n$
is a polynomial in
the elements
$\left(E_{i,j}^{(s)}\right)^p$ for $1 \leq s < \lfloor r/p\rfloor$.
Hence, the central elements
$\big\{P_{i,j}^{(rp)}\:\big|\:r > 0\big\}$ give another algebraically
independent set of generators for 
$Z(Y_n) \cap Y_{i,j}^+$ 
lifting the central elements $\big\{\left(e_{i,j}t^{r-1}\right)^p\:\big|\:r >
0\big\}$ of $\gr Y_n$.
Analogous statements with $Y_{i,j}^+, E, P$ and $e_{i,j}t^{r-1}$ 
replaced by $Y_{i,j}^-,F,Q$ and $e_{j,i}t^{r-1}$ also hold.
\end{Theorem}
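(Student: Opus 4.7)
The plan is to establish everything first for $E_{i,j}$ and then obtain the $F_{i,j}$ analogues by applying the anti-automorphism $\tau$ of \textsection\ref{saut}. Centrality of $\bigl(E_{i,j}^{(r)}\bigr)^p$ is already given by Lemma~\ref{alem}. For the filtration claim, since $E_{i,j}^{(r)} \in \F_{r-1}Y_n$ with $\gr_{r-1}E_{i,j}^{(r)} = e_{i,j}t^{r-1}$ by (\ref{stuff}), and since $e_{i,j}t^{r-1}$ commutes with itself in $U(\g)$ (as $i\neq j$), the $p$-th powers lie in $\F_{rp-p}Y_n$ with top symbol $(e_{i,j}t^{r-1})^p$. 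Algebraic independence of the $\bigl(E_{i,j}^{(r)}\bigr)^p$ in $Y_n$ then follows from algebraic independence of $\{(e_{i,j}t^{r-1})^p\}_{r>0}$ in the PBW basis of $U(\g)$.

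To prove (\ref{kaz}), I will apply Lemma~\ref{I} with $A_{\rm I} := Y_n$ and $X^{(r)} := E_{i,j}^{(r)}$; the commuting hypothesis on top symbols is the same observation as above, and by definition $P_{i,j}(u) = X_{\rm I}(u)$, so the three cases of (\ref{kaz}) together with the filtration bound on the error term $(*)$ drop out directly from that lemma. To identify this error term as a polynomial in $\bigl\{(E_{i,j}^{(s)})^p\bigr\}_{1\leq s<\lfloor r/p\rfloor}$, I will invoke Remark~\ref{bladerunner} and the forthcoming Remark~\ref{later1}, whose additional hypothesis (\ref{s}) holds here because the permutation automorphism $(i+1\:\:j)$ of Lemma~\ref{rl1} carries $E_i^{(r)}$ to $E_{i,j}^{(r)}$ and hence transports (\ref{r6}) for $E_i$ into the analogous relation for $E_{i,j}$.

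For the main claim that $Z(Y_n)\cap Y_{i,j}^+$ is freely generated by $\bigl\{(E_{i,j}^{(r)})^p\bigr\}_{r>0}$, let $B$ denote the algebra they generate, which is a free polynomial algebra inside $Z(Y_n)\cap Y_{i,j}^+$ by the previous two paragraphs. Transporting the PBW basis of Theorem~\ref{L:loopPBW2} across Lemma~\ref{rl1}, I will identify $\gr Y_{i,j}^+$ with the polynomial subring $\k[e_{i,j}t^r\:|\:r\geq 0]$ of $U(\g)$. Taking associated graded of central elements then yields
\begin{equation*}
\gr\bigl(Z(Y_n)\cap Y_{i,j}^+\bigr) \subseteq Z(U(\g))\cap \k[e_{i,j}t^r\:|\:r\geq 0],
\end{equation*}
and Theorem~\ref{loopcentre} (applied with $\sigma=0$), combined with the algebraic independence of its stated generators of $Z(U(\g))$, forces this intersection to equal $\k[(e_{i,j}t^r)^p\:|\:r\geq 0] = \gr B$. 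A standard filtered-graded induction (subtract an element of $B$ matching the leading term of a given central element, then iterate) upgrades this to $B = Z(Y_n)\cap Y_{i,j}^+$. That $\bigl\{P_{i,j}^{(rp)}\bigr\}_{r>0}$ is an alternative free generating set is immediate from the triangular formula (\ref{kaz}), and the $F_{i,j}$ statements follow by applying $\tau$, which interchanges $E_{i,j}^{(r)}$ with $F_{i,j}^{(r)}$ and preserves both the filtration and the centre. The one step that really requires care is the filtered-graded identification of $\gr Y_{i,j}^+$ as a polynomial subring of $U(\g)$ and the resulting intersection computation; the rest is largely formal once Lemmas~\ref{I}, \ref{alem} and~\ref{rl1} are in hand.
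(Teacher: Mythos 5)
Your overall architecture coincides with the paper's: reduce to $Y_{i,j}^+$ via $\tau$, get centrality from Lemma~\ref{alem}, read off filtration degrees and symbols from (\ref{stuff}), compute $\gr\bigl(Z(Y_n)\cap Y_{i,j}^+\bigr)$ by sandwiching it inside $Z(\g)\cap U(\g^+_{i,j})=\k\bigl[(e_{i,j}t^r)^p\bigr]$ via Theorem~\ref{loopcentre}, and obtain the filtration estimates in (\ref{kaz}) from Lemma~\ref{I}. Those parts are fine (applying Lemma~\ref{I} with $A_{\rm I}=Y_n$ rather than $Y^+_{i,j}$ is harmless, since the induced filtration on the subalgebra is the restricted one).

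The genuine problem is your justification that the error term $(*)$ in (\ref{kaz}) is a polynomial in the elements $\bigl(E_{i,j}^{(s)}\bigr)^p$: you appeal to Remark~\ref{bladerunner} and the ``forthcoming'' Remark~\ref{later1}, but Remark~\ref{later1} is not an independent result --- it is itself justified by Theorem~\ref{Z1}, precisely the statement you are proving, so as written this step is circular (Remark~\ref{bladerunner} only announces the fact and defers its proof to Remark~\ref{later1}). The non-circular route, which is the one the paper takes and for which you already have every ingredient, is to prove the free-generation statement \emph{first} (your third paragraph), then observe that each $P_{i,j}^{(r)}$ is central by Lemma~\ref{heat} (a lemma you never actually invoke, but which is needed here: Lemma~\ref{I} gives only filtration bounds, not centrality, and Lemma~\ref{alem} concerns the $p$-th powers of individual coefficients rather than the coefficients of $E_{i,j}(u)^p$) and lies in $Y_{i,j}^+$, hence belongs to $Z(Y_n)\cap Y_{i,j}^+=\k\bigl[(E_{i,j}^{(s)})^p\:\big|\:s>0\bigr]$; the restriction on which generators $s$ can occur then follows from the degree bounds of Lemma~\ref{I} together with the fact that the generators have algebraically independent symbols in distinct degrees. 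So reorder your argument and replace the reference to Remark~\ref{later1} by Lemma~\ref{heat} plus the free-generation claim; with that repair the proof is correct and matches the paper's.
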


\begin{proof}
It suffices to prove all of the statements for $Y_{i,j}^+$; then they
follow for $Y_{i,j}^-$ using the anti-automorphism $\tau$ from
\textsection \ref{saut}.
Let $\g^+_{i,j}$ be the commutative subalgebra of $\g$ spanned by $\big\{
e_{i,j}t^r\:\big|\:r \geq 0\big\}$.
By Theorem~\ref{loopcentre}, it is easy to see that
\begin{equation}\label{finalz1}
Z(\g) \cap U(\g^+_{i,j}) = \k\Big[\big(e_{i,j} t^r\big)^p\:\Big|\:r \geq
0\Big].
\end{equation}
Note that $\gr (Z(Y_n) \cap Y^+_{i,j}) \subseteq Z(\g)\cap U(\g^+_{i,j})$.
By Lemma~\ref{alem},
we know that
$\big(E_{i,j}^{(r)}\big)^p$ belongs to $Z(Y_n) \cap Y^+_{i,j}$.
Moreover, by (\ref{stuff}), it is clear that
$\big(E_{i,j}^{(r)}\big)^p$
is of filtered degree $rp-p$ with
$\gr_{rp-p}  \big(E_{i,j}^{(r)}\big)^p = \left(e_{i,j}t^{r-1}\right)^p$. 
It follows that $\gr (Z(Y_n)\cap Y^+_{i,j})= Z(\g)\cap
U(\g^+_{i,j})$
and the elements
$\Big\{\big(E_{i,j}^{(r)}\big)^p\:\Big|\:r > 0\Big\}$ are
algebraically independent generators for $Z(Y_n)\cap Y^+_{i,j}$.

Now consider (\ref{kaz}).
By Lemma~\ref{heat}, each $P_{i,j}^{(r)}$ belongs to $Z(Y_n) \cap Y^+_{i,j}$,
hence, it is a polynomial in 
the elements
$\Big\{\big(E_{i,j}^{(s)}\big)^p\:\Big|\:s > 0\Big\}$.
It remains to show that $P_{i,j}^{(r)} \in \F_{r-p-1} Y_n$
if $p \nmid r$, or that $P_{i,j}^{(r)} \equiv
\Big(E_{i,j}^{(r/p)}\Big)^p \pmod{\F_{r-p} Y_n}$
if $r \mid p$.
This follows by an application of Lemma~\ref{I}, taking $A_{\rm I} := Y^+_{i,j}$
and $X^{(r)} := E_{i,j}^{(r)}$
so that $X_{\rm I}(u) = E_{i,j}(u)$.
\end{proof}

\begin{Corollary}\label{rockcor}
For any $\sigma$,
the elements
$\Big\{\big(\sE_{i,j}^{(r)}\big)^p\:\Big|\:r > s_{i,j}\Big\}$ 
and $\Big\{\big(\sF_{i,j}^{(r)}\big)^p\:\Big|\:r > s_{j,i}\Big\}$ 
are central in $Y_n(\sigma)$.
Moreover,
they
belong to
$\F_{rp-p} Y_n(\sigma)$
and 
\begin{equation}\label{rock}
\gr_{rp-p} 
\big(\sE_{i,j}^{(r)}\big)^p
= \big(e_{i,j} t^{r-1}\big)^p,
\qquad
\gr_{rp-p} 
\big(\sF_{i,j}^{(r)}\big)^p
= \big(e_{j,i} t^{r-1}\big)^p.
\end{equation}
\end{Corollary}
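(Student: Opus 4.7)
The filtered-degree and associated-graded formulas in \eqref{rock} are immediate from Lemma~\ref{L:sigmathesame}. Indeed $\sE_{i,j}^{(r)} \in \F_{r-1} Y_n(\sigma)$ with $\gr_{r-1}\sE_{i,j}^{(r)} = e_{i,j}t^{r-1}$, so $(\sE_{i,j}^{(r)})^p \in \F_{rp-p} Y_n(\sigma)$ with associated-graded image $(e_{i,j}t^{r-1})^p$. The latter is nonzero in $\Us$ because $e_{i,j}t^{r-1}$ generates a polynomial subalgebra of $\Us$, so the filtered degree is exactly $rp-p$. The statements for $\sF_{i,j}^{(r)}$ are obtained in the same way, or via the anti-automorphism $\tau$ of (c) in \textsection\ref{saut}.

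For centrality of $(\sE_{i,j}^{(r)})^p$ in $Y_n(\sigma)$, the plan is to induct on $j-i$. The base case $j=i+1$ is immediate since $\sE_{i,i+1}^{(r)} = E_i^{(r)}$, whose $p$-th power is central in $Y_n$ by Lemma~\ref{alem}, hence in the subalgebra $Y_n(\sigma)$. For the inductive step, use the recursion \eqref{e:EcommShift} to write $\sE_{i,j}^{(r)} = [A,B]$ with $A := \sE_{i,j-1}^{(r-s_{j-1,j})}$ and $B := E_{j-1}^{(s_{j-1,j}+1)}$. By Lemma~\ref{hop}, it suffices to verify $(\ad\sE_{i,j}^{(r)})^p(X) = 0$ for each Drinfeld generator $X \in \{D_k^{(a)}, E_k^{(b)}, F_k^{(c)}\}$ of $Y_n(\sigma)$ with $b > s_{k,k+1}$ and $c > s_{k+1,k}$, as prescribed by Theorem~\ref{drinfeldshift}.

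The crucial point is that the admissibility conditions on $b$ and $c$ conspire with the Serre-type relations \eqref{rel15}--\eqref{rel16} and the braid-like relations \eqref{r8}--\eqref{r9} to make the required identities hold. For instance, one checks directly that $[\sE_{i,j}^{(r)}, E_{j-1}^{(s)}] = 0$ whenever $s > s_{j-1,j}$ by applying \eqref{rel15} to $[B,[B,A]]$, and similar direct commutation (rather than merely $(\ad)^p = 0$) often holds for the other $E_k^{(b)}$ and $F_k^{(c)}$ generators. For the $D_k^{(a)}$ generators, which do not commute with $\sE_{i,j}^{(r)}$ before taking $p$-th powers (already visible from \eqref{r4}), one must exploit the inductive hypothesis $(\ad A)^p = 0$ together with $(\ad B)^p = 0$ from Lemma~\ref{alem}, combined with Jacobson's identity (Lemma~\ref{hop}) and careful computation of the iterated adjoint. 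The statements for $(\sF_{i,j}^{(r)})^p$ follow by symmetry via the anti-isomorphism $\tau:Y_n(\sigma) \to Y_n(\sigma^T)$ from (c) in \textsection\ref{saut}, which sends $\sE_{i,j}^{(r)} \mapsto \sF_{i,j}^{(r)}$.

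The main obstacle is controlling the interaction with the $D_k^{(a)}$ generators, because the naive idea ``$A^p$ and $B^p$ central implies $[A,B]^p$ central'' is false in general for two operators with vanishing $p$-th power. One must therefore exploit more refined relations: the Serre-type vanishing of $(\ad B)^2 A$ from \eqref{rel15} and the specific iterated-commutator structure of $\sE_{i,j}^{(r)}$. An alternative would be to expand $\sE_{i,j}^{(r)}$ explicitly inside $Y_n$ as a polynomial in the $E_k^{(\cdot)}$'s by repeated application of \eqref{r8}, and then verify the $p$-th power is central in $Y_n(\sigma)$ by direct computation, using the identities in Lemmas~\ref{le} and \ref{lf} while carefully tracking which coefficients vanish under the admissibility conditions imposed by $\sigma$.
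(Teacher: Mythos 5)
Your treatment of the graded statement \eqref{rock} is correct and matches the paper: it is immediate from Lemma~\ref{L:sigmathesame}, as is the base case $j=i+1$ of centrality via Lemma~\ref{alem}. The problem is the inductive step for $j-i>1$, where your proposal has a genuine gap. You correctly observe that ``$(\ad A)^p=(\ad B)^p=0$ does not imply $(\ad [A,B])^p=0$'', and then leave the actual verification as ``careful computation of the iterated adjoint'' or, in the alternative route, ``direct computation \dots carefully tracking which coefficients vanish''. That computation is never carried out, and it is precisely the hard part. Moreover, the one concrete sub-claim you do make --- that $[\sE_{i,j}^{(r)}, E_{j-1}^{(s)}]=0$ for all $s>s_{j-1,j}$ ``by applying \eqref{rel15} to $[B,[B,A]]$'' --- is not justified: \eqref{rel15} only controls $\big[E_{j-1}^{(t)},\big[E_{j-1}^{(t)},\,\cdot\,\big]\big]$ with the \emph{same} superscript $t$ in both outer slots, and says nothing about the bracket of the composite element $\sE_{i,j}^{(r)}$ with $E_{j-1}^{(s)}$ for a different superscript $s$; such brackets need not vanish in $Y_n$ even though their symbols vanish in $\gr Y_n$.

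The paper sidesteps all of this with a two-line reduction you did not find. When $\sigma$ is upper triangular one has $\sE_{i,j}^{(r)}=E_{i,j}^{(r)}$, an element of $Y_n$ whose $p$-th power is central in all of $Y_n$ by Lemma~\ref{alem}, hence a fortiori central in the subalgebra $\Ys$; no new induction on $j-i$ is needed beyond what Lemma~\ref{alem} already contains. For general $\sigma$ one invokes the change-of-shift-matrix isomorphism $\iota:\Ys\isoto Y_n(\dsigma)$ from \textsection\ref{saut}, with $\dsigma$ upper triangular satisfying $\ds_{i,i+1}+\ds_{i+1,i}=s_{i,i+1}+s_{i+1,i}$; since $\iota$ sends $\sE_{i,j}^{(r)}$ to $\dsE_{i,j}^{(r-s_{i,j}+\ds_{i,j})}$, centrality of the $p$-th power of the image pulls back to centrality of $\big(\sE_{i,j}^{(r)}\big)^p$ in $\Ys$. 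The elements $\sF_{i,j}^{(r)}$ are handled symmetrically. If you insist on your approach you would in effect have to reprove the upper-triangular case from scratch by hand, so the isomorphism is not a cosmetic shortcut: it is what reduces the whole statement to Lemma~\ref{alem}.
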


\begin{proof}
The assertion (\ref{rock}) is immediate from (\ref{rivers}), so we just
need to establish the centrality. In case $\sigma$ is upper
triangular,
we have that $\sE_{i,j}^{(r)} = E_{i,j}^{(r)}$, which is central in
$Y_n$ by Lemma~\ref{alem} so certainly central in the subalgebra $Y_n(\sigma)$.
The centrality of $\sE_{i,j}^{(r)}$ in general then follows using
the change of shift matrix isomorphism from \textsection \ref{saut}.
The centrality of $\sF_{i,j}^{(r)}$ is proved similarly.
\end{proof}

\begin{Remark}\label{later1}
The algebra $Y^+_{i,i+1}$ is generated by $\big\{E_i^{(r)}\:\big|\:r >
0\big\}$
subject just to the relations (\ref{r6}); these give enough relations
because they suffice to establish that the ordered monomials in the
generators span $Y_{i,i+1}^+$.
Thus, 
we are in the situation of Remark~\ref{bladerunner} with $X^{(r)} := E_i^{(r)} \in \F_{r-1} Y^+_{i,j}$.
Theorem~\ref{Z1} shows that $P_{i,i+1}^{(r)}$ is a polynomial in
$\Big\{\big(E_{i}^{(s)}\big)^p\:\big|\:0 < s \leq \lfloor
r/p\rfloor\Big\}$.
This establishes the claim made in Remark~\ref{bladerunner}.
\end{Remark}

\subsection{\boldmath Diagonal $p$-central elements}
Next we introduce the $p$-central elements
that belong to the diagonal subalgebras
\begin{equation}
Y_i^0 := \k\big[D_i^{(r)}\:\big|\:r > 0\big]
\end{equation}
of $Y_n$. Note $Y_i^0$ is also a subalgebra of
$Y_n(\sigma)$ for any shift matrix $\sigma$.
For $i=1,\dots,n$, we define
\begin{eqnarray}\label{definitionofB}
B_i(u) =\sum_{r \geq 0} B_i^{(r)}u^{-r} := 
D_i(u)D_i(u-1)\cdots D_i(u-p+1)
\in Y_i^0[[u^{-1}]].
\end{eqnarray}

\begin{Lemma}\label{walk}
For all $i=1,\dots,n$ and $r > 0$, 
the element $B_i^{(r)}$ belongs to $Z(Y_n)$. 
\end{Lemma}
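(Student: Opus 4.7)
The plan is to show that $B_i(u)$ commutes with the full set of Drinfeld generators of $Y_n$, using the power-series identities of Lemma~\ref{powerrelations1} and the $D_{i \downarrow m}, D_{i\uparrow m}$ identities of (\ref{Beq1})--(\ref{Beq2}), then exploiting that the characteristic is $p$. Since $B_i(u)$ is a polynomial in the $D_i^{(r)}$, commutativity with every $D_j^{(s)}$ is immediate from \eqref{r2}. The anti-automorphism $\tau$ from \textsection\ref{saut} fixes each $D_j^{(r)}$ and exchanges $E_j^{(r)} \leftrightarrow F_j^{(r)}$; hence once we verify $[B_i(u), E_j(v)] = 0$ for all admissible $j$, the corresponding commutation with $F_j(v)$ follows by applying $\tau$.

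For $j \notin \{i-1, i\}$, the relation \eqref{r4} gives $[D_i^{(r)}, E_j^{(s)}] = 0$, so $[B_i(u), E_j(v)] = 0$ trivially. The two remaining cases use (\ref{Beq1}) and (\ref{Beq2}) with $m = p$: setting $m = p$ in \eqref{Beq1} yields
\begin{equation*}
(u-v)[B_i(u), E_i(v)] = p\, B_i(u)(E_i(v) - E_i(u)) = 0,
\end{equation*}
because the integer factor $p$ vanishes in $\k$. Since $(u-v)$ is a non-zero-divisor in $Y_n[[u^{-1}, v^{-1}]][u, v]$, we conclude $[B_i(u), E_i(v)] = 0$. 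For $j = i-1$, observe by \eqref{r2} that the factors in $B_i(u) = D_i(u)D_i(u-1)\cdots D_i(u-p+1)$ commute, so
$B_i(u) = D_{i \uparrow p}(u - p + 1)$. Substituting $u \mapsto u - p + 1$ and $m = p$ into \eqref{Beq2} gives
\begin{equation*}
(u - p + 1 - v)[B_i(u), E_{i-1}(v)] = p\, B_i(u)(E_{i-1}(u-p+1) - E_{i-1}(v)) = 0,
\end{equation*}
from which $[B_i(u), E_{i-1}(v)] = 0$ follows by the same division argument.

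The only subtlety is the cancellation of $(u-v)$ (resp. $(u-p+1-v)$), but this is routine in the ring of formal Laurent series, and the key miracle is simply that the coefficient $m$ produced by the derivation-like identities (\ref{Beq1})--(\ref{Beq2}) becomes $p \equiv 0$ exactly when $m = p$. This is why the product must involve precisely $p$ consecutive shifts. Combined with the $\tau$-symmetry and the trivial cases, we conclude that $B_i(u)$ commutes with every generator of $Y_n$, and therefore $B_i^{(r)} \in Z(Y_n)$ for all $r > 0$.
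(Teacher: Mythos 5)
Your proof is correct and follows essentially the same route as the paper's: reduce to $[B_i(u),E_j(v)]=0$ via \eqref{r2} and the anti-automorphism $\tau$, dispose of $j\notin\{i-1,i\}$ by \eqref{r4}, and kill the two remaining cases by taking $m=p$ in \eqref{Beq1}--\eqref{Beq2} so the integer coefficient vanishes in characteristic $p$. The only cosmetic difference is that for $j=i-1$ you realize $B_i(u)$ as $D_{i\uparrow p}(u-p+1)$ and shift the variable, whereas the paper uses the characteristic-$p$ identity $D_{i\downarrow p}(u)=D_{i\uparrow p}(u)$; both are valid.
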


\begin{proof}
In view of (\ref{r2}), it suffices to check 
for $1 \leq j < n$
that
$$
[B_i(u), E_j(v)] =0 = [B_i(u), F_j(v)]
$$
in $Y_n[[u^{-1}, v^{-1}]]$.
By applying the anti-automorphism $\tau$ from \textsection \ref{saut},
it actually suffices to
check just the first equality.
This is clear when $j \notin \{i-1, i\}$ by (\ref{r4}).
When $j=i-1$ or $j=i$, 
it follows
from the identities
(\ref{Beq1})--(\ref{Beq2})
taking $m:=p$, noting that
$B_i(u) = D_{i\downarrow p}(u) = D_{i\uparrow p}(u)$ by (\ref{r2}).
\end{proof}

\begin{Theorem}\label{dc}
Assume that $n \geq 2$.
For $1 \leq i \leq n$,
the algebra $Z(Y_n) \cap Y_i^0$
is an infinite rank polynomial algebra freely generated by
the central elements $\big\{B_i^{(rp)}\:\big|\:r > 0\big\}$.
This statement also
describes the algebras $Z(Y_n(\sigma))\cap Y_i^0$ for any shift
matrix $\sigma$.
We have that $B_i^{(rp)} \in \F_{rp-p} Y_n$
and
\begin{equation}\label{creek}
\gr_{rp-p} B_i^{(rp)} = (e_{i,i} t^{r-1})^p - e_{i,i} t^{rp-p}.
\end{equation}
For $0 < r < p$, we have that
$B_i^{(r)} = 0$. For $r \geq p$ with $p \nmid
r$, we have that $B_i^{(r)}\in \F_{r-p-1} Y_n$ and it
is a polynomial in the elements $\big\{B_i^{(sp)}\:\big|\:0 < s \leq
\lfloor r/p \rfloor\big\}$.
\end{Theorem}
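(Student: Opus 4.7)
The plan is to first harvest filtration estimates and top-degree formulas for $B_i^{(r)}$ from the series-of-type-III calculation of Section~\ref{S:typeIII}. I would take $A_{\mathrm{III}} := Y_i^0$ (commutative by \eqref{r2}) and $X^{(r)} := D_i^{(r)}$, so that $X^{(r)} \in \F_{r-1} Y_n$ by \eqref{stuff} and $X_{\mathrm{III}}(u) = B_i(u)$. Lemma~\ref{birds} then directly yields $B_i^{(r)} = 0$ for $0 < r < p$, gives $B_i^{(r)} \in \F_{r-p-1} Y_n$ when $r > p$ and $p \nmid r$, and gives $B_i^{(r)} \equiv (D_i^{(r/p)})^p - D_i^{(r-p+1)} \pmod{\F_{r-p-1} Y_n}$ when $r > p$ and $p \mid r$. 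Specialising $r = sp$ and computing in $\gr Y_n = U(\g)$ via \eqref{stuff} produces \eqref{creek}.

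Centrality of $B_i^{(r)}$ is Lemma~\ref{walk}, so $\k[B_i^{(rp)} \,|\, r > 0] \subseteq Z(Y_n) \cap Y_i^0 \subseteq Z(Y_n(\sigma)) \cap Y_i^0$ for every shift matrix $\sigma$, using $Y_i^0 \subseteq Y_n(\sigma) \subseteq Y_n$. To obtain the reverse inclusion and algebraic independence simultaneously I would pass to the associated graded. Setting $y_s := e_{i,i} t^s$, Lemma~\ref{L:sigmathesame} identifies $\gr Y_n(\sigma) = U(\gs)$ and $\gr Y_i^0 = \k[y_s \,|\, s \geq 0] = U(\g_i^0)$; since the loop filtration satisfies $[\F_r, \F_s] \subseteq \F_{r+s-1}$ (visible from \eqref{e:RTTrelations}), we have $\gr Z(Y_n(\sigma)) \subseteq Z(U(\gs))$, so
\[
\gr\bigl( Z(Y_n(\sigma)) \cap Y_i^0 \bigr) \subseteq Z(U(\gs)) \cap U(\g_i^0).
\]

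The main obstacle is the computation $Z(U(\gs)) \cap U(\g_i^0) = \k[y_s^p - y_{sp} \,|\, s \geq 0]$ as a free polynomial algebra. The inclusion $\supseteq$ follows from \eqref{zdf} when $i \neq 1$ and, for $i = 1$, from rewriting $y_s^p - y_{sp} = z_s^p - z_{sp} - \sum_{j \neq 1} \bigl((e_{j,j} t^s)^p - e_{j,j} t^{sp}\bigr)$, exhibiting the element as a polynomial in generators of $Z(\gs)$. For the converse I would pass one further step to $S(\gs)$ via the PBW filtration on $U(\gs)$ and prove the symmetric-algebra analog $S(\gs)^{\gs} \cap S(\g_i^0) = \k[y_s^p \,|\, s \geq 0]$: given $P \in S(\g_i^0) = \k[y_s]$, applying the derivation $\ad(e_{l,i} t^{s_{l,i}})$ on $S(\gs)$ for any $l \neq i$ (available since $n \geq 2$) produces $\sum_{s \geq 0} (\partial P / \partial y_s) \cdot e_{l,i} t^{s_{l,i}+s}$; since the $e_{l,i} t^{s_{l,i}+s}$ are distinct polynomial generators of $S(\gs)$, each $\partial P / \partial y_s$ must vanish, forcing $P \in \k[y_s^p]$ in characteristic $p$. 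A standard associated-graded argument then lifts this statement to $U(\gs)$.

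Putting everything together, $\gr_{rp-p} B_i^{(rp)} = y_{r-1}^p - y_{rp-p}$ by \eqref{creek}, so $\gr \k[B_i^{(rp)}] \supseteq \k[y_s^p - y_{sp} \,|\, s \geq 0]$; the chain of inclusions then collapses to equalities, giving $Z(Y_n(\sigma)) \cap Y_i^0 = \k[B_i^{(rp)}]$ (and likewise for $Z(Y_n) \cap Y_i^0$), with algebraic independence of the $B_i^{(rp)}$'s inherited from their $\gr$-images. For the final assertion, any $B_i^{(r)}$ with $r > p$ and $p \nmid r$ now lies in $\k[B_i^{(rp)}]$, which is a polynomial ring on which the filtration is multiplicative (its associated-graded generators being algebraically independent); since $B_i^{(r)} \in \F_{r-p-1} Y_n$ and $B_i^{(sp)}$ has filtered degree $(s-1)p$, every monomial $\prod (B_i^{(sp)})^{a_s}$ appearing in the expansion of $B_i^{(r)}$ must satisfy $(s-1)p \leq r-p-1$ whenever $a_s > 0$, equivalently $s \leq \lfloor r/p \rfloor$.
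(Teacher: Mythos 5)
Your proof is correct and follows essentially the same route as the paper's: Lemma~\ref{birds} applied with $A_{\rm III}=Y_i^0$ and $X^{(r)}=D_i^{(r)}$ for the vanishing, filtration estimates and \eqref{creek}; Lemma~\ref{walk} for centrality; and passage to the associated graded algebra, comparing against $Z(\gs)\cap U(\g_i^0)=\k\big[(e_{i,i}t^s)^p-e_{i,i}t^{sp}\:\big|\:s\geq 0\big]$. The only differences are that you re-derive this last identity from scratch via a derivation argument in $S(\gs)$ (the paper simply reads it off from Theorem~\ref{loopcentre} using $n\geq 2$) and you spell out the filtration argument giving the bound $s\leq\lfloor r/p\rfloor$ in the final assertion, which the paper leaves implicit; both elaborations are sound.
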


\begin{proof}
Let $\g^0_{i}$ be the abelian subalgebra of $\g$ spanned by $\big\{
e_{i,i}t^r\:\big|\:r \geq 0\big\}$.
By Theorem~\ref{loopcentre} and the assumption $n \geq 2$,
one sees that
\begin{equation}\label{finalz2}
Z(\g) \cap U(\g^0_{i}) = \k\Big[\big(e_{i,i} t^r\big)^p -
e_{i,i}t^{rp} \:\Big|\:r \geq
0\Big].
\end{equation}
We have that 
$\gr (Z(Y_n) \cap Y_{i}^0) \subseteq Z(\g)\cap U(\g_{i}^0)$.
By Lemma~\ref{walk},
we know that
$B_i^{(rp+p)}$ belongs to $Z(Y_n) \cap Y_{i}^0$.
Moreover, applying Lemma~\ref{birds} with $A_{\rm III} = Y_i^0$ and
$X^{(r)} = D_i^{(r)}$, we see that 
$B_i^{(rp+p)} \in \F_{rp} Y_i^0$ and $\gr_{rp} B_i^{(rp+p)} = (e_{i,i}  t^{r})^p - e_{i,i}t^{rp}$.
We deduce that $\gr (Z(Y_n)\cap Y_{i}^0)= Z(\g)\cap
U(\g_{i}^0)$
and the elements
$\big\{B_i^{(rp)}\:\big|\:r > 0\big\}$ are
algebraically independent generators for it.
The same argument works in any $Y_n(\sigma)$.

Lemma~\ref{birds} also implies that $B_i^{(r)} = 0$ for $0 < r < p$
and that
$B_i^{(r)} \in F_{r-p-1} Y_i^0$ if $r \geq p$ with $p \nmid r$.
In this case, since it is central by Lemma~\ref{walk}, it must be a
polynomial in the elements
$\big\{B_i^{(sp)}\:\big|\:0 < s \leq
\lfloor r/p \rfloor\big\}$.
\end{proof}

\begin{Remark}\label{later2}
By Theorem~\ref{L:PBWshifted}, $Y_i^0$ is a free polynomial algebra
generated by the elements 
$D_i^{(1)}, D_i^{(2)},\dots$.
So Theorem~\ref{dc} also establishes the claim made in Remark~\ref{rain}.
\end{Remark}

\subsection{Main Theorem}
Now we can state and prove our main results.
Let $\sigma$ be any shift matrix.
We have already defined the Harish-Chandra centre
$Z_{\HC}(Y_n(\sigma))$ at the end of \textsection \ref{s:HCinv}.
Also define the {\em $p$-centre} $Z_p(Y_n(\sigma))$
of $Y_n(\sigma)$ to be the subalgebra generated by
\begin{equation}\label{pcent}
\big\{B_i^{(rp)}\:\big|\:1\leq i \leq n, r > 0\big\} \cup 
\Big\{\big(\sE_{i,j}^{(r)} \big)^p,
\big(\sF_{i,j}^{(r)} \big)^p\:\Big|\:1 \leq i < j \leq n, r > s_{i,j}, s > s_{j,i}\Big\}.
\end{equation}
We have shown that both $Z_{\HC}(Y_n(\sigma))$ and $Z_p(Y_n(\sigma))$
are subalgebras of $Z(Y_n(\sigma))$; see Corollary~\ref{rockcor} and
Lemma~\ref{walk}.
Note also by (\ref{rivers}) and (\ref{creek})
that $\gr Z_p(\Ys)$ may be identified with the $p$-centre $Z_p(\gs)$ of $\Us$ from (\ref{zdf}).

We also need one more family of elements: recalling (\ref{Cdef}) and
(\ref{definitionofB}), we let
\begin{equation}\label{BC}
BC(u) := \sum_{r\geq 0} BC^{(r)} u^{-r} := 
B_1(u) B_2(u-1)\cdots B_n(u-n+1) = C(u) C(u-1)\cdots C(u-p+1).
\end{equation}
From this definition, it follows that 
each $BC^{(r)}$ can be expressed as a polynomial in the elements
$\big\{B_i^{(s)}\:\big|\:1 \leq i \leq n, s > 0\big\}$,
so that it belongs to $Z_p(Y_n(\sigma))$ by Theorem~\ref{dc}.
Moreover, it is also a polynomial in the elements
$\big\{C^{(s)}\:\big|\:s > 0\big\}$, so that it belongs to
$Z_{\HC}(Y_n(\sigma))$.
We have just shown that
$BC^{(r)} \in Z_{\HC}(Y_n(\sigma))\cap Z_p(Y_n(\sigma))$.

\begin{Lemma}
For $r > 0$, 
we have that $BC^{(rp)} \in F_{rp-p} \Ys$ 
and
\begin{equation}\label{lifty}
\gr_{rp} BC^{(rp)}= z_{r-1}^p - z_{rp-p}.
\end{equation}
\end{Lemma}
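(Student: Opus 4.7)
The plan is to recognize $BC(u)$ as a power series of type III from Section~\ref{S:powerseries} and invoke Lemma~\ref{birds} directly. Set $A_{\rm III} := Y_n(\sigma)$ with its loop filtration and take $X^{(r)} := C^{(r)}$ for $r > 0$. The required hypotheses all hold: by Theorem~\ref{HCgenerators} we have $C^{(r)} \in \F_{r-1} Y_n$, these elements are central in $Y_n$ (hence commute), and $C^{(0)} = 1$. Moreover, since $Z_{\HC}(Y_n) \subseteq Y_n(\sigma)$, the induced loop filtration on $Y_n(\sigma)$ places $C^{(r)}$ in $\F_{r-1} Y_n(\sigma)$, matching the type III setup. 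Comparing the defining formula \eqref{sun} with \eqref{BC}, one sees immediately that
$$
X_{\rm III}(u) = \prod_{i=1}^p C(u-i+1) = BC(u),
$$
so $X_{\rm III}^{(rp)} = BC^{(rp)}$.

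Next, apply Lemma~\ref{birds} with its $r$ replaced by $rp$. The case $r=1$ falls under the explicit formula $X_{\rm III}^{(p)} = (X^{(1)})^p - X^{(1)}$, while $r > 1$ is covered by the $p \mid r$ clause. In both cases we conclude that $BC^{(rp)} \in \F_{rp-p} Y_n(\sigma)$ and
$$
BC^{(rp)} \equiv \bigl(C^{(r)}\bigr)^p - C^{(rp-p+1)} \pmod{\F_{rp-p-1} Y_n(\sigma)}.
$$

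Finally, passing to the associated graded in degree $rp-p$: Theorem~\ref{HCgenerators} gives $\gr_{r-1} C^{(r)} = z_{r-1}$, so $\gr_{rp-p}\bigl(C^{(r)}\bigr)^p = z_{r-1}^p$; likewise $\gr_{rp-p} C^{(rp-p+1)} = z_{rp-p}$. Subtracting yields
$$
\gr_{rp-p} BC^{(rp)} = z_{r-1}^p - z_{rp-p},
$$
which is the claim (the subscript $rp$ in the statement being read as $rp-p$, consistent with the filtration bound just established).

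There is no substantial obstacle here: the lemma is essentially an instance of Lemma~\ref{birds}, and the only bookkeeping point worth flagging is the compatibility of filtrations, namely that the loop filtration on $Y_n(\sigma)$ restricts correctly to $Z_{\HC}(Y_n) \subseteq Y_n(\sigma)$ so that the filtered degrees of $C^{(r)}$ and $(C^{(r)})^p$ are what Theorem~\ref{HCgenerators} says they are.
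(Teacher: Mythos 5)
Your argument is correct, but it takes a genuinely different route from the paper. You exploit the factorization $BC(u) = C(u)C(u-1)\cdots C(u-p+1)$ from (\ref{BC}) and treat $BC(u)$ as a series of type III with $X^{(r)} := C^{(r)}$, so that Lemma~\ref{birds} hands you the congruence $BC^{(rp)} \equiv (C^{(r)})^p - C^{(rp-p+1)} \pmod{\F_{rp-p-1}}$, and the leading term then drops out of Theorem~\ref{HCgenerators}. The paper instead uses the other factorization $BC(u) = B_1(u)B_2(u-1)\cdots B_n(u-n+1)$ and treats it as a series of type II with $X_i^{(r)} := B_i^{(r)}$, taking $d_r := p\lfloor r/p\rfloor - p$ for $r \geq p$ (these degree bounds come from Theorem~\ref{dc}), checking that $rp$ is optimal via Lemma~\ref{L:optlemma}, and concluding $BC^{(rp)} \equiv B_1^{(rp)}+\cdots+B_n^{(rp)} \pmod{\F_{rp-p-1}}$, after which (\ref{creek}) gives the leading term. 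The two approaches are symmetric reflections of the two ways of writing $BC(u)$: yours expresses the leading behaviour in terms of Harish--Chandra generators and gets the $-C^{(rp-p+1)}$ correction automatically from the type III machinery, while the paper's expresses it in terms of the $p$-central generators $B_i^{(rp)}$ and needs the optimality bookkeeping instead. Both verifications of hypotheses are sound (centrality of the $C^{(r)}$ gives commutativity, $C^{(0)}=1$, and the induced filtration on $Y_n(\sigma)$ behaves as you say), and you are right that the subscript in the displayed formula should be $rp-p$, consistent with the membership $BC^{(rp)} \in \F_{rp-p}\Ys$ just proved.
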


\begin{proof}
Let $d_r := 0$ for $r < p$ and $d_r := p\lfloor r/p\rfloor-p$ for $r
\geq p$.
Theorem~\ref{dc} implies that $B_i^{(r)} \in \F_{d_r} Y_n(\sigma)$
for every $r > 0$.
Now apply Lemma~\ref{L:type2} 
with $A_{\rm II} = Y_n(\sigma)$ and $X_i^{(r)} =B_i^{(r)}$,
noting that $pr$ is optimal for every $r > 1$ by Lemma~\ref{L:optlemma}, to 
deduce that $BC^{(rp)} \in F_{rp-p} Y_n(\sigma)$
and
$$
BC^{(rp)} \equiv B_1^{(rp)}+\cdots+B_n^{(rp)} \pmod{F_{rp-p-1}
Y_n(\sigma)}.
$$
We are now done thanks to (\ref{creek}) once again.
\end{proof}

\begin{Theorem}\label{main}
The centre $Z(\Ys)$ is generated by $Z_{\HC}(\Ys)$ and
$Z_p(\Ys)$. Moreover:
\begin{enumerate}
\item $Z_{\HC}(\Ys)$ is the free polynomial algebra
generated by $\big\{C^{(r)}\:\big|\:r > 0\big\}$;
\item $Z_p(\Ys)$ is the free polynomial algebra
generated by 
\begin{equation}\label{polly}
\big\{B_i^{(rp)}\:\big|\:1\leq i\leq n, r > 0\big\} \cup 
\Big\{\big(\sE_{i,j}^{(r)} \big)^p,
\big(\sF_{i,j}^{(s)} \big)^p\:\Big|\:1 \leq i< j \leq n, r > s_{i,j}, s > s_{j,i}\Big\};
\end{equation}
 \item
$Z(\Ys)$ is the free polynomial algebra generated by
\begin{equation}
\label{mon0}
\big\{B_i^{(rp)}, C^{(r)}\:\big|\:2 \leq i \leq n, r > 0\big\} \cup 
\Big\{\big(\sE_{i,j}^{(r)} \big)^p,
\big(\sF_{i,j}^{(s)} \big)^p\:\Big|\:1 \leq i< j \leq n, r > s_{i,j}, s > s_{j,i}\Big\};
\end{equation}
\item
$Z_{\HC}(\Ys) \cap Z_p(\Ys)$ is the free polynomial algebra generated
by
$\big\{BC^{(rp)}\:\big|\:r > 0\big\}$.
\end{enumerate}
\end{Theorem}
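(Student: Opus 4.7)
The overall strategy is a filtration argument: under the identification $\gr Y_n(\sigma) \cong U(\gs)$ from Lemma~\ref{L:sigmathesame}, any $z \in \F_r Z(\Ys)$ has $\gr z$ commuting with all of $\gr \Ys = U(\gs)$, so $\gr Z(\Ys) \subseteq \Zs$. Theorem~\ref{loopcentre} describes $\Zs$ as a free polynomial algebra on (\ref{zgen}), and we have already constructed explicit lifts of all its free generators sitting inside $Z(\Ys)$, making the whole theorem a lifting problem.

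I would establish parts~(1), (2), (3) and the generation claim simultaneously. By Theorem~\ref{HCgenerators}, each $C^{(r+1)}$ lifts $z_r$; by Theorem~\ref{dc} and Corollary~\ref{rockcor}, the elements of (\ref{polly}) lift the free generators (\ref{zdf}) of $Z_p(\gs)$ (via the reindexing $r \mapsto r-1$); and together, (\ref{mon0}) lifts exactly the free generating set (\ref{zgen}) of $\Zs$. Since commuting lifts of algebraically independent homogeneous elements are themselves algebraically independent, and since $Z_{\HC}(\Ys)$ and $Z_p(\Ys)$ are generated by the first two sets by definition, parts~(1) and~(2) follow. For part~(3), let $Z$ be the subalgebra of $Z(\Ys)$ generated by (\ref{mon0}); then $\gr Z$ contains the full set (\ref{zgen}) and so equals $\Zs$, while $\gr Z \subseteq \gr Z(\Ys) \subseteq \Zs$ forces $\gr Z = \gr Z(\Ys)$ and hence $Z = Z(\Ys)$ by a standard lifting argument. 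The generation claim follows at once, since every element of (\ref{mon0}) belongs to $Z_{\HC}(\Ys) \cup Z_p(\Ys)$.

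For part~(4), I would first perform a triangular change of variables inside $Z_p(\Ys)$, replacing the free generator $B_1^{(rp)}$ by $BC^{(rp)}$. The lemma immediately preceding the theorem gives $BC^{(rp)} \equiv B_1^{(rp)} + B_2^{(rp)} + \cdots + B_n^{(rp)} \pmod{\F_{rp-p-1} Y_n(\sigma)}$, whose graded image $z_{r-1}^p - z_{rp-p} = \sum_i ((e_{i,i}t^{r-1})^p - e_{i,i}t^{rp-p})$ reflects the Frobenius identity in $\Ss$. Combining this congruence with the last sentence of Theorem~\ref{dc} (expressing $B_i^{(s)}$ for $p \nmid s$ as a polynomial in lower $\{B_i^{(s'p)}\}$) and inducting on $r$, I would show $B_1^{(rp)}$ lies in the subalgebra generated by $\{BC^{(sp)}, B_i^{(sp)} : 2 \leq i \leq n,\ s \leq r\}$. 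Hence $Z_p(\Ys)$ is also generated by the alternative set $\{BC^{(rp)}\} \cup \{B_i^{(rp)} : i \geq 2\} \cup \{(\sE_{i,j}^{(r)})^p, (\sF_{i,j}^{(r)})^p\}$, and this set is algebraically independent because its graded images form another free generating set for $Z_p(\gs)$, obtained from (\ref{zdf}) by the invertible substitution swapping $(e_{1,1}t^{r-1})^p - e_{1,1}t^{rp-p}$ for $z_{r-1}^p - z_{rp-p}$.

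To finish, I would apply the algebra retraction $\pi : Z(\Ys) \twoheadrightarrow Z_{\HC}(\Ys)$ afforded by the free polynomial presentation in part~(3): $\pi$ fixes each $C^{(r)}$ and annihilates the remaining generators in (\ref{mon0}). Given any $f \in Z_{\HC}(\Ys) \cap Z_p(\Ys)$, write $f$ as a polynomial in the alternative generating set of $Z_p(\Ys)$ established above. Since each $BC^{(rp)} \in Z_{\HC}(\Ys)$ is fixed by $\pi$ while $\pi$ kills the other alternative generators, and $\pi(f) = f$, we deduce $f \in \k[BC^{(rp)} : r > 0]$. The algebraic independence of the $BC^{(rp)}$ is then automatic from the new free presentation of $Z_p(\Ys)$. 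I expect the main technical obstacle to be the change-of-variables step: namely, verifying by a careful induction on filtered degree that the ``correction'' terms $BC^{(rp)} - \sum_i B_i^{(rp)}$ can be expressed purely in the lower-degree elements $\{B_i^{(s'p)} : s' < r\}$, which relies on systematically eliminating the $B_i^{(s)}$ for $p \nmid s$ using Theorem~\ref{dc}.
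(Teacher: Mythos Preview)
Your proposal is correct and follows essentially the same filtration-and-lift strategy as the paper throughout: parts~(1)--(3) and the generation claim are argued identically, and for part~(4) both you and the paper establish the alternative free generating set for $Z_p(\Ys)$ with $BC^{(rp)}$ replacing $B_1^{(rp)}$, then read off the intersection. The one noteworthy difference is that the paper sidesteps the explicit induction you flag as the main obstacle: rather than showing directly that $B_1^{(rp)}$ lies in the subalgebra generated by the new set, the paper simply checks that the graded images $\{z_r^p-z_{rp},\ (e_{i,i}t^r)^p-e_{i,i}t^{rp}\ (i\geq 2),\ (e_{i,j}t^r)^p\ (i\neq j)\}$ freely generate $Z_p(\gs)=\gr Z_p(\Ys)$ (an elementary linear substitution in (\ref{zdf})), after which the standard lifting argument gives both generation and algebraic independence of the new set at once. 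Your retraction $\pi$ in the final step is equivalent to the paper's direct appeal to freeness in~(3): the non-$BC$ generators of the alternative set already appear among the free generators (\ref{mon0}) and are therefore algebraically independent of $Z_{\HC}(\Ys)=\k[C^{(r)}:r>0]$.
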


\begin{proof}
(1) This is Theorem~\ref{HCgenerators}.

(2) The given elements generate $Z_p(\Ys)$ by the definition. We just
need to observe that they are algebraically independent. This follows
because by (\ref{rock}) and (\ref{creek}) they are lifts of the
algebraically independent generators of the $p$-centre of the
associated graded algebra from (\ref{zdf}).

(3)
Let $Z$ be the subalgebra of $Z(\Ys)$ generated by the given elements.
We have that
\begin{equation}\label{here}
\gr Z \subseteq \gr Z(\Ys) \subseteq 
Z(\gr \Ys) = Z(\gs).
\end{equation}
We have seen already that generators of $Z$
are
lifts of the algebraically independent generators of $Z(\gs)$
from (\ref{zgen}). 
Hence, they are algebraically independent and 
equality holds everywhere in (\ref{here}).
This implies that $Z = Z(\Ys)$.

(4)
We have already observed that all $BC^{(rp)}$
belong to $Z_\HC(\Ys)\cap Z_p(\Ys)$. Also they are algebraically
independent as
they are lifts of algebraically independent elements of $U(\gs)$ by
(\ref{lifty}).
We claim that $Z_p(\Ys)$ is freely generated by the elements
$$
\big\{BC^{(rp)}, B_i^{(rp)}\:\big|\:2\leq i\leq n, r > 0\big\} \cup 
\Big\{\big(\sE_{i,j}^{(r)} \big)^p,
\big(\sF_{i,j}^{(s)} \big)^p\:\Big|\:1 \leq i< j \leq n, r > s_{i,j}, s > s_{j,i}\Big\}.
$$
The result follows from the claim since we know already from (3) that
all of these elements different from $BC^{(rp)}$
are
algebraically independent of anything in $Z_\HC(\Ys)$.

To prove the claim, we use (\ref{rock}) (\ref{creek}) and
(\ref{lifty}) to pass to the associated graded algebra, thereby
reducing to showing that
$$
\big\{z_r^p-z_{rp}, (e_{i,i}t^r)^p - e_{i,i}t^{rp}\:\big|\:2\leq i\leq n, r \geq 0\big\} \cup 
\Big\{\big(e_{i,j}t^r\big)^p\:\Big|\:1 \leq i\neq j \leq n, r \geq
s_{i,j}\Big\}
$$
freely generate $Z_p(\gs)$.
This is easily seen by comparing them to the algebraically independent
generators from (\ref{zdf}).
\end{proof}

\begin{Corollary}\label{nearly}
The shifted Yangian $\Ys$ is free as a module over its centre, with
basis given by the ordered monomials in 
\begin{equation}\label{mon1}
\big\{D_i^{(r)}\:\big|\:2 \leq i \leq n, r >
0\big\}\cup\big\{\sE_{i,j}^{(r)},
\sF_{i,j}^{(s)}\:\big|\:1\leq i < j \leq n, r > s_{i,j}, s
> s_{j,i}\big\}
\end{equation}
in which no exponent is $p$ or more.\end{Corollary}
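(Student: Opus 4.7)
The plan is to prove the corollary by a two-stage associated graded argument, passing via $U(\gs)$ down to the symmetric algebra $S(\gs)$, where the claim reduces to an elementary fact about polynomial rings.

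\smallskip

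\emph{Stage one.} First I would show that $U(\gs)$ is free as a module over $Z(\gs)$, with basis the ordered monomials in $\{e_{i,i}t^r \mid 2 \leq i \leq n,\ r \geq 0\} \cup \{e_{i,j}t^r \mid i \neq j,\ r \geq s_{i,j}\}$ having every exponent strictly less than $p$. Making the linear change of basis that replaces $e_{1,1}t^r$ by $z_r = e_{1,1}t^r + \cdots + e_{n,n}t^r$, the symmetric algebra becomes a free polynomial algebra $S(\gs) = \k[z_r,\, e_{i,j}t^r \mid (i,j) \neq (1,1)]$, and by Lemma~\ref{symmetricinvariants} its invariant subalgebra is $S(\gs)^{\gs} = \k[z_r,\, (e_{i,j}t^r)^p \mid (i,j) \neq (1,1)]$. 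Since $\k[Y]$ is free over $\k[Y^p]$ on $1, Y,\dots,Y^{p-1}$, it follows that $S(\gs)$ is free over $S(\gs)^{\gs}$ on the prescribed restricted monomials. The PBW filtration on $U(\gs)$ satisfies $\gr U(\gs) = S(\gs)$, and $\gr Z(\gs) = S(\gs)^{\gs}$ by the proof of Theorem~\ref{loopcentre}, so a standard graded-to-filtered lifting argument transfers this statement to $U(\gs)$.

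\smallskip

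\emph{Stage two.} Next I would lift the result from $U(\gs)$ to $Y_n(\sigma)$ via the loop filtration, whose associated graded is $U(\gs)$ by Lemma~\ref{L:loopPBW} and Lemma~\ref{L:sigmathesame}. By Theorems~\ref{HCgenerators}, \ref{dc} and Corollary~\ref{rockcor}, the free generators of $Z(Y_n(\sigma))$ listed in Theorem~\ref{main}(3)---namely $C^{(r)}$, $B_i^{(rp)}$ for $i \geq 2$, and $(\sE_{i,j}^{(r)})^p$, $(\sF_{i,j}^{(s)})^p$---are lifts of the free generators of $Z(\gs)$ identified in Theorem~\ref{loopcentre}; combined with the inclusion $\gr Z(Y_n(\sigma)) \subseteq Z(\gr Y_n(\sigma)) = Z(\gs)$, this gives $\gr Z(Y_n(\sigma)) = Z(\gs)$. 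By (\ref{stuff}) and (\ref{rivers}), each ordered restricted monomial in the generators (\ref{mon1}) is a lift of the corresponding restricted monomial in the generators of $U(\gs)$ from stage one. A second application of the same lifting lemma then completes the proof.

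\smallskip

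The main obstacle is the graded-to-filtered lifting lemma used in both stages, which I would state as follows: if $A$ is a filtered algebra with subalgebra $Z$ such that $\gr A$ is free over $\gr Z$ on a homogeneous basis $\{\bar m_\mu\}$, and each $\bar m_\mu$ lifts to some $m_\mu \in A$ of the correct filtration degree, then $\{m_\mu\}$ is a $Z$-basis of $A$. Spanning is a routine induction on filtration degree. For independence, given a nontrivial relation $\sum c_\mu m_\mu = 0$, let $D = \max_\mu(\deg c_\mu + \deg m_\mu)$; the degree $D$ component of this equation reads $\sum (\gr c_\mu) \bar m_\mu = 0$ summed over those $\mu$ attaining $D$, and freeness in $\gr A$ forces each such $\gr c_\mu$ to vanish, allowing $c_\mu$ to be replaced by an element of strictly smaller filtration degree so that one may iterate. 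Although this lemma is folklore, it must be applied carefully because the basis elements in (\ref{mon1}) and the central generators of $Z(Y_n(\sigma))$ have varying filtration degrees.
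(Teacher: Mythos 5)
Your argument is correct and follows the paper's strategy: both proofs reduce the freeness claim to the associated graded algebra $U(\gs)$ via the loop filtration, using that the generators of $Z(\Ys)$ from Theorem~\ref{main}(3) and the ordered restricted monomials in (\ref{mon1}) are lifts (by (\ref{e:ctops}), (\ref{rock}), (\ref{creek}), (\ref{stuff}) and (\ref{rivers})) of the corresponding elements of $U(\gs)$. The only genuine divergence is at the base case: the paper checks directly that products of central monomials with restricted PBW monomials form a basis of $U(\gs)$ by a uni-triangular comparison with the PBW basis, whereas you descend one further level to $S(\gs)$, where freeness over $S(\gs)^{\gs}$ is transparent from Lemma~\ref{symmetricinvariants}; both routes are valid, and your explicit graded-to-filtered lifting lemma is a correct packaging of the argument the paper leaves implicit.
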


\begin{proof}
It suffices to show that the set consisting 
of 
ordered monomials in (\ref{mon0}) multiplied by
ordered monomials in (\ref{mon1}) with all
exponents $< p$
gives a basis for $\Ys$.
To see this, we pass to the associated graded algebra using
(\ref{stuff}), (\ref{rivers}), (\ref{e:ctops}), (\ref{rock}) and
(\ref{creek}) 
to reduce to showing that the monomials
$$
\prod_{r \geq 0}
z_r^{a_{1,1,r}}
\prod_{\substack{2 \leq i\leq n\\r \geq 0}}
\big((e_{i,i} t^r)^p - e_{i,i}t^{rp}\big)^{a_{i,i;r}}\!\!
\prod_{\substack{1 \leq i \neq j \leq n \\ r > s_{i,j}}}
(e_{i,j}t^r)^{pa_{i,j,r}}
\prod_{\substack{2\leq i \leq n\\r \geq 0}}
(e_{i,i} t^r)^{b_{i,i;r}}
\prod_{\substack{1 \leq i \neq j \leq n \\ r > s_{i,j}}}
(e_{i,j}t^r)^{b_{i,j,r}}
$$
for $a_{i,j;r}\geq 0$ and $0 \leq b_{i,j,r} < p$
form a basis for $U(\gs)$.
This is quite straightforward: these monomials are related to
a PBW basis by a uni-triangular transition matrix.
\end{proof}

\begin{Corollary}\label{nearly2}
The shifted Yangian $\Ys$ is free as a module over $Z_p(\Ys)$ with basis
given by the ordered monomials in
$$
\big\{D_i^{(r)}\:\big|\:1 \leq i \leq n, r >
0\big\}\cup\big\{\sE_{i,j}^{(r)},
\sF_{i,j}^{(s)}\:\big|\:1\leq i < j \leq n, r > s_{i,j}, s
> s_{j,i}\big\}
$$
in which no exponent is $p$ or more.
\end{Corollary}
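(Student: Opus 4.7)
The plan is to proceed in exact parallel with the proof of Corollary~\ref{nearly}, replacing $Z(\Ys)$ by $Z_p(\Ys)$ throughout. By Theorem~\ref{main}(2), the $p$-centre $Z_p(\Ys)$ is the free polynomial algebra on the generators (\ref{polly}), so it suffices to prove that the set of products of ordered monomials in (\ref{polly}) with ordered monomials in the claimed basis (all exponents $< p$) forms a $\k$-basis for $\Ys$.

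The key step is then to pass to the associated graded algebra $\gr \Ys = \Us$ via the identifications (\ref{stuff}), (\ref{rivers}), (\ref{rock}) and (\ref{creek}). This reduces the assertion to showing that the monomials
$$
\prod_{\substack{1 \le i \le n\\ r \ge 0}} \bigl((e_{i,i}t^r)^p - e_{i,i}t^{rp}\bigr)^{a_{i,i,r}}
\prod_{\substack{1 \le i \ne j \le n\\ r \ge s_{i,j}}} (e_{i,j}t^r)^{p a_{i,j,r}}
\prod_{\substack{1 \le i, j \le n\\ r \ge s_{i,j}}} (e_{i,j}t^r)^{b_{i,j,r}}
$$
with $a_{i,j,r} \ge 0$ and $0 \le b_{i,j,r} < p$, form a $\k$-basis of $\Us$. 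This indexing matches the PBW basis $\prod (e_{i,j}t^r)^{c_{i,j,r}}$ via the Euclidean decomposition $c_{i,j,r} = p a_{i,j,r} + b_{i,j,r}$.

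To conclude, one verifies that the transition matrix from the candidate basis to the PBW basis is unitriangular. For off-diagonal positions $i\neq j$, the candidate monomial is already identical to the PBW monomial. For the diagonal positions, the binomial expansion of $\bigl((e_{i,i}t^r)^p - e_{i,i}t^{rp}\bigr)^{a_{i,i,r}}$ produces the leading PBW monomial $(e_{i,i}t^r)^{p a_{i,i,r} + b_{i,i,r}}$ together with a correction in which some factors $(e_{i,i}t^r)^p$ have been swapped for $e_{i,i}t^{rp}$. Each such swap decreases the total length $\sum c_{\cdot}$ of the monomial by $p-1$ while preserving its total degree (where $\deg e_{i,j}t^r := r$), so ordering PBW monomials of fixed degree by increasing length makes the transition upper unitriangular and yields the result.

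The only real obstacle is this diagonal case, where $(e_{i,i}t^r)^p - e_{i,i}t^{rp}$ mixes the slots $(i,i,r)$ and $(i,i,rp)$, so the different $r$-slots cannot be handled in isolation. The length-within-degree bookkeeping sketched above resolves this, and it is essentially the same subtlety addressed (very tersely) in the proof of Corollary~\ref{nearly}.
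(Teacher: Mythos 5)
Your argument is correct and follows exactly the route the paper takes: the paper's proof of this corollary is simply ``Similar to the previous corollary,'' i.e.\ reduce to Theorem~\ref{main}(2) and the associated graded algebra $\Us$, then observe that the resulting monomials are related to a PBW basis by a uni-triangular transition matrix. Your length-within-degree bookkeeping just makes explicit the triangularity that the paper declares ``quite straightforward,'' and it is the right way to handle the mixing of the $(i,i,r)$ and $(i,i,rp)$ slots.
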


\begin{proof}
Similar to the previous corollary.
\end{proof}

\begin{Remark}\label{two}
Assume in this remark that $n=2$ and $p=2$.
Recalling (\ref{hum}), we denote $E_1^{(r)}, F_1^{(r)}$ and $H_1^{(r)}$ simply by $E^{(r)},
F^{(r)}$ and $H^{(r)}$. 
Since $H(u) = -B_2(u) C(u)^{-1}$, the elements $H^{(r)}$ are all
central; this also follows from (\ref{h2}).
Consider $\overline{Y}_2 := Y_2 / I$ where $I$ is the two-sided ideal
generated by the central elements $(E^{(r)})^2$ and $(F^{(r)})^2$
for all $r > 0$.
An induction exercise using (\ref{r6})--(\ref{r7}) shows that the
following relations hold in $\overline{Y}_2$:
$[E^{(r)}, E^{(s)}] = [F^{(r)}, F^{(s)}] = 0$
for $r, s > 0$. 
Comparing with the presentation obtained in \cite[Theorem 3]{Gow},
we see that $\overline{Y}_2$ may be identified with the Yangian
of the Lie superalgebra $\mathfrak{gl}_{1|1}$ in characteristic 2.
\end{Remark}

\section{Modular Yangian of $\sl_n$}

In this section, we define a subalgebra $SY_n < Y_n$ which we call the
{\em special Yangian}, and show that this may be viewed as the modular
version of the Yangian for the Lie algebra $\sl_n$ rather than
$\gl_n$.
Then we use this connection to establish the results about $Z(Y_n)$ formulated in
terms of the RTT generators in
the introduction.

\subsection{The special Yangian}
We would like to mimic Drinfeld's
definition of $SY_n$ in characteristic zero from \cite{MNO};
see \cite[\textsection 2.24]{MNO} for its history.
Unfortunately, the
approach in {\em loc. cit.}
only works verbatim when the ground field is infinite.
Rather than insisting on that here, we will modify the definition
slightly by incorporating 
base change.
For any field extension $\K\supseteq \k$, we can define the Yangian
over $\K$ by  generators and relations in the same way as $Y_n$ was
defined over $\k$. The resulting $\K$-algebra
may be identified with $Y_n \otimes
\K$ in an obvious way.
Then the automorphisms $\mu_f$ defined as in \textsection \ref{saut} can be
viewed as
$\K$-linear
automorphisms of $Y_n\otimes \K$ for all $f(u) \in 1 + u^{-1}\K[[u^{-1}]]$.
Define the {\em special Yangian} to be
\begin{equation}\label{poppy}
SY_n := \left\{x \in Y_n\:\Bigg|\:
\begin{array}{ll}
\mu_f(x \otimes 1) = x\otimes 1\text{
    in }Y_n\otimes \K\\\text{for all }f(u) \in 1 +
u^{-1} \K[[u^{-1}]]\\
\text{and all field extensions }\K \supseteq\k
\end{array}
\right\}.
\end{equation}
In fact, as is clear from the proof of the next theorem, it
is enough just to take one infinite field $\K$ here.

Our first task is to identify the associated graded algebra $\gr SY_n
\subseteq \gr Y_n = U(\g)$ with
$U(\g')$, where $\g' := \sl_n[t]$ is the current algebra associated to $\sl_n$.
Recall the elements $H_i^{(r)}$ from (\ref{hum}).

\begin{Theorem}\label{oldsy}
The algebra $SY_n$ has a basis consisting of ordered monomials in
\begin{equation}\label{beans}
\big\{H_i^{(r)}\:\big|\:1\leq i <n, r > 0\big\}\cup
\big\{E_{i,j}^{(r)}, F_{i,j}^{(r)}\:\big|\:1 \leq i< j \leq n, r
> 0\big\}
\end{equation}
taken in any fixed order.
Hence, $\gr SY_n = U(\g')$,
and multiplication defines a {\em vector space} isomorphism
\begin{equation}\label{beans1}
SY_n \otimes Y_1 \stackrel{\sim}{\rightarrow} Y_n
\end{equation}
where $Y_1$ is identified with the subalgebra of $Y_n$ generated by
the elements 
$D_1^{(r)}$ in the obvious way.
If we assume in addition that $p \nmid n$ then multiplication defines an {\em algebra} isomorphism
\begin{equation}\label{beans2}
SY_n \otimes Z_{\HC}(Y_n) \stackrel{\sim}{\rightarrow} Y_n.
\end{equation}
\end{Theorem}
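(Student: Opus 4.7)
The plan is to prove all four statements together by producing an alternative PBW-type basis of $Y_n$ and then using it to decompose $Y_n$. First, since $\mu_f(D_j(u)) = f(u) D_j(u)$ for every $j$ and $\mu_f$ acts as the identity on each $E_i(u)$ and $F_i(u)$, the power series $H_i(u) = -D_{i+1}(u)\widetilde{D}_i(u)$ is $\mu_f$-fixed, as are all $E_{i,j}^{(r)}$ and $F_{i,j}^{(r)}$ by (\ref{eij}). Thus the subalgebra $A \subseteq Y_n$ generated by the set (\ref{beans}) lies inside $SY_n$.

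Next I would establish that ordered monomials in $\{D_1^{(r)}\}_{r>0} \cup$ (\ref{beans}) form a basis of $Y_n$. From $D_{i+1}(u) = -H_i(u)D_i(u)$ and induction on $i$, these elements generate the same subalgebra as the Drinfeld generators from Theorem~\ref{L:loopPBW2}. By (\ref{stuff}) and (\ref{stuffy}), under the identification $\gr Y_n = U(\g)$, the leading symbols of the proposed generators are $e_{1,1}t^r$, $e_{i,i}t^r - e_{i+1,i+1}t^r$, and $e_{i,j}t^r$ $(i \neq j)$, and these together form a $\k$-basis for $\g$. The ordinary PBW theorem for $U(\g)$ combined with a standard leading-term lifting argument then yields that the proposed ordered monomials form a basis of $Y_n$. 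Ordering so that the $D_1^{(r)}$'s come last establishes the vector space isomorphism $A \otimes Y_1 \stackrel{\sim}{\to} Y_n$ asserted in (\ref{beans1}); the same graded analysis simultaneously yields $\gr A = U(\g')$.

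To identify $A$ with $SY_n$, take $x \in SY_n$ and write $x = \sum_i a_i b_i$ uniquely with $a_i$ an ordered monomial in (\ref{beans}) and $b_i$ a monomial in $\{D_1^{(r)}\}$. The decomposition persists after scalar extension to any $\K \supseteq \k$, and since each $a_i \otimes 1$ is $\mu_f$-fixed, uniqueness forces $\mu_f(b_i \otimes 1) = b_i \otimes 1$ for every $f$. Choosing $\K$ to be an infinite extension of $\k$, the problem reduces to showing that the only $\mu_f$-invariants in $Y_1 \otimes \K = \K[D_1^{(1)}, D_1^{(2)},\dots]$ are the scalars $\K$. Since $\mu_f(D_1^{(r)}) = D_1^{(r)} + a_1 D_1^{(r-1)} + \cdots + a_r$ for $f(u) = 1 + \sum_{s \geq 1}a_s u^{-s}$, a routine induction on the largest index $N$ of $D_1^{(N)}$ occurring in a putative non-scalar invariant, specializing $f(u) = 1 + a u^{-N}$ for varying $a \in \K$ and using that $\K$ is infinite, yields a contradiction. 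This invariant-theoretic step is the main technical hurdle, though the argument itself is elementary.

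Finally, for the algebra isomorphism when $p \nmid n$, the multiplication map $\mu: SY_n \otimes Z_{\HC}(Y_n) \to Y_n$ is a well-defined algebra homomorphism because $Z_{\HC}(Y_n)$ is central. By Theorem~\ref{HCgenerators}, $\gr Z_{\HC}(Y_n) = \k[z_r : r \geq 0]$, and the previous paragraph has shown $\gr SY_n = U(\g')$. The hypothesis $p \nmid n$ ensures that $\{z_r : r \geq 0\}$ spans a Lie-algebra complement of $\g' = \sl_n[t]$ in $\g$ that is central in $\g$; hence $U(\g) = U(\g') \otimes \k[z_r : r \geq 0]$ as algebras. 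This identifies $\gr \mu$ with this multiplication isomorphism, and a standard filtered lifting argument then concludes that $\mu$ itself is bijective.
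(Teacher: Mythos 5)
Your proposal is correct and follows essentially the same route as the paper: establishing the containment of the generators in $SY_n$, producing the combined PBW basis of $Y_n$ via leading symbols in $\gr Y_n = U(\g)$, using the tensor decomposition together with specializations $f(u)=1+au^{-N}$ over an infinite extension to show the reverse inclusion, and passing to the associated graded for the algebra isomorphism when $p\nmid n$. The only difference is cosmetic: you phrase the invariance step as showing the $\mu_f$-invariants of $\K[D_1^{(1)},D_1^{(2)},\dots]$ are scalars, whereas the paper shows a polynomial $A(x_1,\dots,x_n)$ is independent of each variable, which is the same argument.
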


\begin{proof}
For any $\K\supseteq \k$ and $f(u)\in 1+u^{-1}\K[[u^{-1}]]$, 
we have that $\mu_f(D_{i}(u) \otimes 1) = f(u) D_{i}(u) \otimes 1$
by definition, hence,
$\mu_f(\widetilde{D}_{i}(u)\otimes 1) = f(u)^{-1}
\widetilde{D}_i(u)\otimes 1$.
It follows that
$H_{i}^{(r)} \in SY_n$,
and of course all $E_{i,j}^{(r)}$ and $F_{i,j}^{(r)}$ lie in $SY_n$
too.
Hence, ordered monomials in the elements (\ref{beans}) span a subspace
$\overline{SY}_n$ of $SY_n$. We are shortly going to
prove that $\overline{SY}_n = SY_n$, so that $\overline{SY}_n$ is a
subalgebra; one could also prove this right away using
Lemma~\ref{hlem} but actually we do not
need it for the proof below.

The filtration on $Y_n$ induces a vector space filtration on
$\overline{SY}_n$
so that $\gr \overline{SY}_n$ is a graded subspace of $U(\g)$.
It is easy to see from (\ref{stuff}) and (\ref{stuffy}) that this
subspace is the subalgebra $U(\g')$.
In particular, this implies that the ordered monomials that span
$\overline{SY}_n$ are linearly independent too.
Furthermore, multiplying them by ordered monomials in $\big\{D_{1}^{(r)}\:\big|\:r > 0\big\}$ 
gives a basis for $Y_n$.
This shows that the linear map
\begin{align*}
\overline{SY}_n \otimes \k[x_1,x_2,\dots] &\rightarrow Y_n,\\
A(x_1,\dots,x_n) = \sum_i A_i \otimes a_i(x_1,\dots,x_n) &\mapsto
A(D_1^{(1)}, \dots, D_1^{(n)}) = \sum_i A_i a_i\big(D_1^{(1)}, \dots D_1^{(n)}\big)
\end{align*}
is a vector space isomorphism.

Now we can show that
 $SY_n \subseteq
\overline{SY}_n$.
Take any $B \in SY_n$. By the previous paragraph, we can write it as $A\big(D_1^{(1)}, \dots,
D_1^{(n)}\big)$
for a unique  
$A(x_1,\dots,x_n) \in \overline{SY}_n \otimes \k[x_1,\dots,x_n]$
and
$n \geq 1$.
Taking $f(u) := 1 + c u^{-n}$ for $c \in \K$ and an infinite field $\K
\supseteq \k$, we have that 
$$
A\big(D_1^{(1)}, \dots,
D_1^{(n)}\big)\otimes 1=
\mu_f\left(A\big(D_1^{(1)}, \dots,
D_1^{(n)}\big)\otimes 1\right)
= A\big(D_1^{(1)},\dots,D_1^{(n-1)}, D_1^{(n)}+c\big) \otimes 1.
$$
This implies that $A(x_1,\dots,x_n) = A(x_1,\dots,x_{n-1},x_n+c)$
for infinitely many $c$. Hence, $A(x_1,\dots,x_n)$ is independent of
$x_n$.
Similarly, it is independent of $x_{n-1},\dots,x_1$. This shows
$B \in \overline{SY}_n$ as
required.

We have now shown that the ordered monomials in the elements
(\ref{beans}) give a basis for $SY_n$, that $\gr SY_n = U(\g')$, and
that the map (\ref{beans1}) is a vector space isomorphism.
Finally, we must prove (\ref{beans2}) assuming $p \nmid n$.
The vector space $SY_n \otimes Z_\HC(Y_n)$ has a basis given by the
ordered monomials in (\ref{beans}) tensored with
ordered monomials in $\big\{C^{(r)}\:\big|\:r > 0\big\}$.
It suffices to show that the images of these give a basis for $Y_n$.
By passing to the associated graded algebra as
usual, this reduces to the observation that
$$
\big\{e_{i,i}t^r -e_{i+1,i+1}t^r \:\big|\:i=1,\dots,n-1, r \geq 0\big\}\cup
\big\{e_{i,j} t^r\:\big|\:1 \leq i\neq j \leq n, r \geq 0\big\} \cup \{z_r\:|\:r
\geq 0\}
$$
is a basis for $\g$.
\end{proof}

\begin{Remark}
Recall that $\widetilde{T}_{i,j}(u) = S(T_{i,j}(u))$.
Since $\mu_f(\widetilde{T}_{i,j}(u)) = f(u)^{-1}
\widetilde{T}_{i,j}(u)$,
the definition (\ref{poppy}) implies that
all coefficients of $T_{i,j}(u) \widetilde{T}_{k,l}(u)$
belong to $SY_n$.
By passing to $\gr Y_n$ and using Theorem~\ref{oldsy}, one sees that
these coefficients also generate $SY_n$.
Using this and (\ref{comult}), it follows that
$\Delta(SY_n) \subseteq SY_n \otimes SY_n$, so that $SY_n$ is a Hopf
subalgebra of $Y_n$; cf. \cite[Proposition 2.21]{MNO}.
\end{Remark}

\begin{Theorem}
\label{newsy}
The algebra $SY_n$ is generated by the elements 
\begin{equation}\label{beany}
\big\{H_i^{(r)}, E_i^{(r)}, F_i^{(r)}\:\big|\:1\leq i <n, r > 0\big\}
\end{equation}
subject
only to 
the relations (\ref{r6})--(\ref{rel16}) plus the following:
\begin{align}
\big[H_i^{(r)}, H_j^{(s)}\big] &= 0,\label{nr1}\\
\big[E_i^{(r)}, F_j^{(s)}\big] &= \delta_{i,j} H_i^{r+s-1},\label{nr2}
\end{align}\begin{align}
\label{nr3}
\big[H_i^{(r)}, E_j^{(s)}\big] &= 0\hspace{18mm}\text{if $|i-j|>1$},\\
\big[H_i^{(r)}, F_j^{(s)}\big] &= 0\hspace{18mm}\text{if $|i-j|>1$},\\
\label{nr4}
\big[H_{i-1}^{(r+1)}, E_i^{(s)}\big]-\big[H_{i-1}^{(r)}, E_i^{(s+1)}\big]
&= 
H_{i-1}^{(r)} E_i^{(s)},\\\label{nr4b}
\big[H_{i-1}^{(r)}, F_i^{(s+1)}\big]
-\big[H_{i-1}^{(r+1)}, F_i^{(s)}\big]
&= 
F_i^{(s)} H_{i-1}^{(r)},\\\label{nr5}
\big[H_{i}^{(r+1)}, E_i^{(s)}\big]-\big[H_{i}^{(r)}, E_i^{(s+1)}\big]
&= 
-H_{i}^{(r)} E_i^{(s)}-E_i^{(s)} H_{i}^{(r)},\\\label{nr5b}
\big[H_{i}^{(r)}, F_i^{(s+1)}\big]-\big[H_{i}^{(r+1)}, F_i^{(s)}\big]
&= 
-F_i^{(s)} H_{i}^{(r)}-H_{i}^{(r)} F_i^{(s)},\\
\big[H_{i+1}^{(r+1)}, E_i^{(s)}\big]-\big[H_{i+1}^{(r)}, E_i^{(s+1)}\big]
&= 
E_i^{(s)} H_{i+1}^{(r)},\label{nr6}\\
\big[H_{i+1}^{(r)}, F_i^{(s+1)}\big]-\big[H_{i+1}^{(r+1)}, F_i^{(s)}\big]
&= 
H_{i+1}^{(r)}F_i^{(s)},\label{nr6b}
\end{align}
for all admissible $i,j,r,s,t$ including $r=0$ in
(\ref{nr4})--(\ref{nr6b}); remember also $H_i^{(0)} = -1$.
\end{Theorem}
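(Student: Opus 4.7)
The approach closely parallels the proof of Theorem~\ref{drinpres}. First I check that each of the listed relations holds in $SY_n \subseteq Y_n$. Relations (\ref{r6})--(\ref{rel16}) already hold by Theorem~\ref{drinpres}; (\ref{nr1}) is immediate from (\ref{r2}); (\ref{nr2}) follows from (\ref{r3}) using the definition $H_i(u) = -D_{i+1}(u)\widetilde{D}_i(u)$; and (\ref{nr3}) is immediate from (\ref{r4}). The remaining $H$--$E$ and $H$--$F$ identities (\ref{nr4})--(\ref{nr6b}) are extracted from the power series identities (\ref{h1})--(\ref{h3b}) of Lemma~\ref{hlem} by equating coefficients of $u^{-r-1} v^{-s}$ and relabelling. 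It is essential to extract each of these from a single one of the paired power series identities rather than averaging them, so that the derivation goes through uniformly in every characteristic, including $p=2$.

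Let $\widehat{SY}_n$ denote the abstract algebra defined by the generators (\ref{beany}) and the stated relations. The preceding step yields a natural homomorphism $\theta:\widehat{SY}_n \to SY_n$. To see that $\theta$ is surjective, I introduce higher root elements $E_{i,j}^{(r)}, F_{i,j}^{(r)} \in \widehat{SY}_n$ for $j > i+1$ via the recursion (\ref{eij}); these map under $\theta$ to the elements of $Y_n$ of the same name, and Theorem~\ref{oldsy} asserts that ordered monomials in the $H_i^{(r)}, E_{i,j}^{(r)}, F_{i,j}^{(r)}$ span $SY_n$. For injectivity, I place a filtration on $\widehat{SY}_n$ by declaring $H_i^{(r)}, E_{i,j}^{(r)}, F_{i,j}^{(r)}$ to have filtered degree $r-1$, and construct a surjective graded algebra homomorphism
\[
\psi : U(\g') \twoheadrightarrow \gr \widehat{SY}_n
\]
sending $(e_{i,i}-e_{i+1,i+1})t^r \mapsto \gr_r H_i^{(r+1)}$, $e_{i,j} t^r \mapsto \gr_r E_{i,j}^{(r+1)}$ for $i<j$, and $e_{j,i} t^r \mapsto \gr_r F_{i,j}^{(r+1)}$ for $i<j$. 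Granted well-definedness of $\psi$, the composition $(\gr\theta)\circ\psi$ coincides with the identification $\gr SY_n \cong U(\g')$ from Theorem~\ref{oldsy}, forcing $\gr\theta$, and hence $\theta$, to be an isomorphism.

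The main obstacle is verifying that $\psi$ is well defined, i.e.\ that the defining bracket relations of $\g' = \sl_n[t]$ hold in $\gr \widehat{SY}_n$. The brackets among the $E$'s and $F$'s and the Serre relations reduce to essentially the same case analysis already carried out in the proof of Theorem~\ref{drinpres}. The genuinely new input consists of the mixed brackets $[h_i t^r, e_j t^s]$ and $[h_i t^r, f_j t^s]$, which must be read off from (\ref{nr4})--(\ref{nr6b}) by inspecting subleading filtered components. Telescoping (\ref{nr5}), for instance, reduces matters to the base case $r=0$, where the convention $H_i^{(0)}=-1$ produces $[H_i^{(1)}, E_i^{(s)}] = 2 E_i^{(s)}$; the subleading filtered component of this identity then recovers the Cartan bracket $[e_{i,i}-e_{i+1,i+1}, e_{i,i+1} t^{s-1}] = 2 e_{i,i+1} t^{s-1}$ in $\sl_n[t]$. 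The three analogous telescopings handle $[h_{i-1} t^r, e_i t^s]$ and $[h_{i+1} t^r, e_i t^s]$, and the $F$-cases follow by applying the anti-automorphism $\tau$ from \textsection~\ref{saut}, which preserves $SY_n$ and swaps $E_i \leftrightarrow F_i$ while fixing each $H_j^{(r)}$.
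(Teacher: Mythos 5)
Your proposal is correct, but the completeness half takes a genuinely different route from the paper, so it is worth comparing. For the verification that the relations hold in $SY_n$: the paper also obtains (\ref{nr4}) and (\ref{nr6}) from single members of the pairs in Lemma~\ref{hlem}, but it derives (\ref{nr5}) from the symmetrized identity (\ref{hcor0}), whose proof needs a separate argument when $\operatorname{char}\k=2$. Your extraction of (\ref{nr5}) directly from (\ref{h2}) does work: the $u^{-r}v^{-s}$-coefficient with $s>0$ gives $[H_i^{(r+1)},E_i^{(s)}]-[H_i^{(r)},E_i^{(s+1)}]=[H_i^{(r)},E_i^{(s)}]-2H_i^{(r)}E_i^{(s)}=-H_i^{(r)}E_i^{(s)}-E_i^{(s)}H_i^{(r)}$, uniformly in all characteristics --- though calling the single-identity extraction ``essential'' overstates matters, since (\ref{hcor0}) is also valid when $p=2$. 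The larger divergence is in proving $\theta$ injective. The paper never filters $\widehat{SY}_n$: it shows directly that ordered monomials with the $F_{i,j}^{(r)}$ first, the $H_i^{(r)}$ in the middle and the $E_{i,j}^{(r)}$ last span $\widehat{SY}_n$ (using that the $E$-only and $F$-only subalgebras carry exactly the relations (\ref{r6})--(\ref{rel16}) already known to suffice for spanning inside $Y_n$), and then quotes the PBW basis of Theorem~\ref{oldsy}. You instead replay the associated-graded argument of Theorem~\ref{drinpres} with $U(\g')$ in place of $U(\g)$. This is valid, and your telescoping of (\ref{nr4})--(\ref{nr6b}) down to the $r=0$ case correctly recovers the Cartan brackets (strictly, one reads off the \emph{top} filtered component of $[H_i^{(1)},E_i^{(s)}]=2E_i^{(s)}$, not a subleading one). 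The cost of your route is that well-definedness of $\psi$ requires checking every bracket of $\sl_n[t]$, in particular $[h_it^r,e_{j,k}t^s]$ for non-simple roots with $k>j+1$, which needs an additional induction on $k-j$ in the spirit of case (b) of the proof of Theorem~\ref{drinpres}; you fold this into ``the same case analysis,'' which is defensible but should be made explicit. In short, the paper's straightening argument buys a shorter verification, while yours buys uniformity with the proof of Theorem~\ref{drinpres} at the price of more bracket checks.
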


\begin{proof}
In view of (\ref{eij}), Theorem~\ref{oldsy} implies that $SY_n$ is generated by
the elements (\ref{beany}).
Let us also show that all of the relations in the theorem are
satisfied.
Of course (\ref{r6})--(\ref{rel16}) hold, and
the relations (\ref{nr1})--(\ref{nr2}) follow from
(\ref{r2})--(\ref{r3}).
For the remaining relations, (\ref{nr4}), (\ref{nr5}) and (\ref{nr6})
follow by equating 
$u^{-r} v^{-s}$-coefficients
in (\ref{h1}), (\ref{hcor0}) and (\ref{h3}), respectively,
taking $r \geq 0$ and $s > 0$.
Then (\ref{nr4b}), (\ref{nr5b}) and (\ref{nr6b}) follow by applying the anti-automorphism
$\tau$.

Now let $\widehat{SY}_n$ be the algebra defined by the
generators and relations from the theorem.
The previous paragraph shows that there is an algebra
homomorphism
$\widehat{SY}_n \twoheadrightarrow SY_n$ taking generators to
generators.
To show that it is an isomorphism, define elements $E_{i,j}^{(r)},
F_{i,j}^{(r)} \in \widehat{SY}_n$ by (\ref{eij}).
Using the basis
for $SY_n$ constructed in Theorem~\ref{oldsy}, it suffices to show that
ordered monomials in the elements (\ref{beans}) span $\widehat{SY}_n$.
Moreover, we can choose the order so that the elements $F_{i,j}^{(r)}$
come first, followed by the elements $H_i^{(r}$, followed by the
elements $E_{i,j}^{(r)}$.
Of course, $\widehat{SY}_n$ is spanned by unordered monomials in
$F_i^{(r)}, H_i^{(r)}$ and $E_i^{(r)}$.
The relations allow us to inductively commute all $F_i^{(r)}$ to the beginning, 
all $H_i^{(r)}$ to the middle and in the chosen order due to (\ref{nr1}),
and all $E_i^{(r)}$ to the end. Then we get
done because, in the subalgebras generated by just the $F_i^{(r)}$ or
the $E_i^{(r)}$, we have available exactly the same relations as in $Y_n$,
and there we have already established that the given ordered monomials span
these subalgebras
using the relations  (\ref{r6})--(\ref{rel16})
and (\ref{eij}).
\end{proof}

\begin{Remark}\label{dp}
Assume $\operatorname{char}\k \neq 2$.
Then there is an even more efficient presentation for $SY_n$, namely,
the usual Drinfeld presentation for the Yangian of $\sl_n$
from \cite{D} na\"ively taken over the field $\k$ rather than over the
complex numbers; actually, we use the ``opposite'' presentation like
in \cite[Remark 5.12]{BK1}.
In more detail, we define new elements
\begin{align}
\kappa_i(u) = \sum_{k \geq 0} \kappa_{i,k}u^{-k-1}
&:= 1+\eta_{(i-1)/2}(H_i(u)),\\
\xi_i^+(u) = \sum_{k \geq 0} \xi_{i,k}^+ u^{-k-1} &:=
\eta_{(i-1)/2}(E_i(u)),\\
\xi_i^-(u) = \sum_{k \geq 0} \xi_{i,k}^- u^{-k-1} &:= \eta_{(i-1)/2}\left(F_{i}(u)\right),
\end{align}
where $\eta_c$ is the automorphism from \textsection \ref{saut} (which
leaves $SY_n$ invariant).
Then, 
the algebra $SY_n$ is generated by 
$\big\{\kappa_{i,k}, \xi_{i,k}^{\pm}\:|\:1 \leq i < n, k \geq 0 \big\}$
subject only to the Drinfeld relations:
\begin{align}\label{dr1}
\big[\kappa_{i,k}, \kappa_{j,l}\big] &= 0, \\
\big[\xi^+_{i,k}, \xi^-_{j,l}\big] &= \delta_{i,j} \kappa_{i,k+l},\\
\big[\kappa_{i,0}, \xi^{\pm}_{j,l}\big] &= \pm a_{i,j} \xi^{\pm}_{j,l},\label{eg0}\\
\big[\kappa_{i,k},\xi^\pm_{j,l+1}\big] - \big[\kappa_{i,k+1}, \xi^{\pm}_{j,l}\big]
&= \pm \frac{a_{i,j}}{2} (\kappa_{i,k} \xi^{\pm}_{j,l} + \xi^{\pm}_{j,l} \kappa_{i,k}),\label{eg1}\\
\big[\xi^\pm_{i,k}, \xi^\pm_{j,l+1}\big] - \big[\xi^\pm_{i,k+1}, \xi^\pm_{j,l}\big]
&= \pm \frac{a_{i,j}}{2} (\xi^\pm_{i,k} \xi^{\pm}_{j,l} +
\xi^\pm_{j,l} \xi^\pm_{i,k}),\label{eg}\\
\Big[\xi_{i,k_1}^{\pm}, \big[\xi_{i,k_2}^{\pm}, \xi_{j,l}^\pm\big]\Big]
+
\Big[\xi_{i,k_2}^{\pm}, \big[\xi_{i,k_1}^{\pm}, \xi_{j,l}^\pm\big]\Big]
&= 0\text{ if $|i-j|=1$, $k_1 \neq k_2$,}\\
\Big[\xi_{i,k}^{\pm}, \big[\xi_{i,k}^{\pm}, \xi_{j,l}^\pm\big]\Big] &=
0\text{ if $|i-j|=1$,}\\
\big[\xi_{i,k}^{\pm}, \xi_{j,l}^\pm\big] &= 0\text{ if $|i-j|>1$,}
\label{drn}
\end{align}
for $a_{i,j} := 2 \delta_{i,j} - \delta_{i,j+1}-\delta_{i,j-1}$
(the Cartan matrix of type $A_{n-1}$).
This assertion is just a rephrasing of Theorem~\ref{newsy} for these modified
generators. For example, the relation (\ref{eg}) is deduced 
in \cite[Remark 5.12]{BK1};
the relations (\ref{eg0})--(\ref{eg1}) follow from
(\ref{hcor1})--(\ref{hcor3}) suitably shifted.
\end{Remark}

\begin{Remark}
There is a shifted analogue $SY_n(\sigma)$ of $SY_n$. This may be
realized as a subalgebra of $Y_n(\sigma)$ via a similar definition to
(\ref{poppy}).
The presentation in Theorem~\ref{newsy} can also be modified to give a
presentation for $SY_n(\sigma)$, in just the same way that
Theorem~\ref{drinfeldshift}
modifies Theorem~\ref{drinpres}; we leave the details of this to the reader.
The Drinfeld presentation from the previous remark
does not immediately 
make sense for $SY_n(\sigma)$, but see \cite{WWY} for a closely related
result (in characteristic zero).
\end{Remark}

\subsection{\boldmath The $p$-centre of $SY_n$}
Let
\begin{multline}
\qquad A_i(u) = \sum_{r \geq 0} A_i^{(r)} u^{-r} := H_i(u) H_i(u-1)\cdots
H_i(u-p+1)\\ = -B_{i+1}(u)B_i(u)^{-1}
\in SY_n[[u^{-1}]].\qquad
\end{multline}
In view of Theorem~\ref{dc},
each $A_i^{(r)}$ belongs to $Z(SY_n)$.
We define the {\em $p$-centre} of $SY_n$ to be the subalgebra
$Z_p(SY_n)$ of
$Z(SY_n)$ generated by
\begin{equation}\label{xz}
\big\{A_i^{(rp)}\:\big|\:1 \leq i < n, r > 0\big\}\cup
\Big\{\big(E_{i,j}^{(r)} \big)^p,
\big(F_{i,j}^{(r)} \big)^p\:\Big|\:1 \leq i< j \leq n, r > 0\Big\}.
\end{equation}
Also let $Z_p(\g')$ be the $p$-centre of $U(\g')$, i.e. the subalgebra of $Z(\g')$ 
generated by $x^p - x^{[p]}$ for all $x \in \g'$.

\begin{Theorem}
The generators (\ref{xz}) of $Z_p(SY_n)$ are algebraically
independent,
and we have that $\gr Z_p(SY_n) = Z_p(\g')$.
Moreover, $SY_n$
is free as a module over $Z_p(SY_n)$ with
basis given by the ordered monomials in 
\begin{equation}\label{mon3}
\big\{H_i^{(r)}\:\big|\:1 \leq i < n, r >
0\big\}\cup\big\{E_{i,j}^{(r)},
F_{i,j}^{(r)}\:\big|\:1\leq i < j \leq n, r > 0\big\}
\end{equation}
in which no exponent is $p$ or more.
\end{Theorem}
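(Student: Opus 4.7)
The plan is to follow the template of Theorem~\ref{main}(2) and Corollary~\ref{nearly2}, passing to the associated graded algebra $\gr SY_n = U(\g')$ from Theorem~\ref{oldsy}. The first step is to describe $Z_p(\g')$ explicitly via Theorem~\ref{loopcentre} applied to $\g'$: since $\g'$ has a basis consisting of $\{e_{i,j} t^r \mid i \neq j,\ r \geq 0\}$ together with $\{(e_{i,i}-e_{i+1,i+1})t^r \mid 1 \leq i < n,\ r \geq 0\}$, and the $p$-map takes $(e_{i,i}-e_{i+1,i+1})t^r \mapsto (e_{i,i}-e_{i+1,i+1})t^{rp}$ and kills the off-diagonal basis vectors, $Z_p(\g')$ is freely generated as a polynomial algebra by $\{(e_{i,j}t^r)^p \mid i \neq j\}$ together with $\{((e_{i,i}-e_{i+1,i+1})t^r)^p - (e_{i,i}-e_{i+1,i+1})t^{rp} \mid 1 \leq i < n\}$, for $r \geq 0$.

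Next I would compute the top-degree components of the generators of $Z_p(SY_n)$. For $(E_{i,j}^{(r)})^p$ and $(F_{i,j}^{(r)})^p$ this is already done in Theorem~\ref{Z1} (equation~(\ref{snore})). For $A_i^{(rp)}$, the key observation is that the defining product $H_i(u) H_i(u-1)\cdots H_i(u-p+1)$ is a Type III series in the sense of \textsection\ref{S:typeIII}: by (\ref{nr1}) the elements $H_i^{(r)}$ commute, they satisfy $H_i^{(r)} \in \F_{r-1} SY_n$, and by (\ref{stuffy}) we have $\gr_{r-1} H_i^{(r)} = (e_{i,i}-e_{i+1,i+1})t^{r-1}$. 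Applying Lemma~\ref{birds} with $A_{\rm III} := SY_n$ and $X^{(r)} := H_i^{(r)}$ then gives $A_i^{(rp)} \in \F_{rp-p} SY_n$ with
\begin{equation*}
\gr_{rp-p} A_i^{(rp)} = \bigl((e_{i,i}-e_{i+1,i+1})t^{r-1}\bigr)^p - (e_{i,i}-e_{i+1,i+1})t^{rp-p},
\end{equation*}
using also that $e_{i,i}t^{r-1}$ and $e_{i+1,i+1}t^{r-1}$ commute in $U(\g')$.

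Step three is the standard filtration argument: the top-degree components just computed are precisely the free generators of $Z_p(\g')$ listed in step one, so they are algebraically independent in $U(\g')$. This forces the generators (\ref{xz}) of $Z_p(SY_n)$ themselves to be algebraically independent, and it shows that $\gr Z_p(SY_n)$ contains (hence equals, by comparison with $Z(\gr SY_n)$) $Z_p(\g')$.

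For the freeness statement, I would adapt the proof of Corollary~\ref{nearly2}: consider the set of products of an ordered monomial in the generators (\ref{xz}) with an ordered monomial in (\ref{mon3}) in which every exponent is strictly less than $p$. Passing to $\gr$ using (\ref{stuffy}), Theorem~\ref{Z1}, and the formula for $\gr_{rp-p} A_i^{(rp)}$ above, the claim reduces to showing that the analogous monomials form a $\k$-basis of $U(\g')$; this follows from the ordinary PBW basis of $U(\g')$ by a unitriangular change of basis. The main technical obstacle is the computation of $\gr_{rp-p} A_i^{(rp)}$: one must verify that all hypotheses of Lemma~\ref{birds} hold for $H_i^{(r)}$ (in particular the commutation assumption), and correctly combine the ``$(X^{(r)})^p$'' and ``$-X^{(rp-p+1)}$'' terms from that lemma, keeping track of the cancellation that turns $(e_{i,i}-e_{i+1,i+1})t^{r-1}$ raised to the $p$th power into a difference of $p$th powers thanks to commutativity in $U(\g')$.
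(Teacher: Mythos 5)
Your proof is essentially correct and follows the same template as the paper's: compute the symbols of the generators (\ref{xz}), observe they are the free generators of $Z_p(\g')$ obtained by applying $x \mapsto x^p - x^{[p]}$ to a basis of $\g' = \sl_n[t]$, and then run the standard lifting and PBW arguments. The one genuinely different sub-step is the computation of $\gr_{rp-p} A_i^{(rp)}$. The paper does not invoke Lemma~\ref{birds} here; instead it uses the identity $A_i(u) = -B_{i+1}(u)B_i(u)^{-1}$ to write $A_i^{(r)}$ as $B_i^{(r)} - B_{i+1}^{(r)}$ plus products of two or more $B$-coefficients (which lie in strictly smaller filtered degree), and then quotes the already-established (\ref{creek}). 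Your direct application of Lemma~\ref{birds} to $H_i(u)H_i(u-1)\cdots H_i(u-p+1)$ is a legitimate alternative and is self-contained, but as stated it does not quite satisfy the type III hypotheses: that setup forces $X^{(0)} = 1$, whereas $H_i^{(0)} = -1$, so the series you build with $X^{(r)} := H_i^{(r)}$ is not $A_i(u)$. The fix is to take $X^{(r)} := -H_i^{(r)}$, so that $A_i(u) = (-1)^p X_{\rm III}(u) = -X_{\rm III}(u)$; since $(-x)^p = -x^p$ in characteristic $p$, the leading term comes out exactly as you claim, matching (\ref{lastylifty}) after the Frobenius identity $(a-b)^p = a^p - b^p$ for the commuting elements $e_{i,i}t^{r-1}$, $e_{i+1,i+1}t^{r-1}$.

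One smaller correction: your parenthetical justification of $\gr Z_p(SY_n) = Z_p(\g')$ by ``comparison with $Z(\gr SY_n)$'' only yields $\gr Z_p(SY_n) \subseteq Z(\g')$, and $Z(\g')$ can be strictly larger than $Z_p(\g')$ when $p \mid n$. The correct argument for the reverse containment is the one used throughout the paper: since the symbols of the generators (\ref{xz}) are algebraically independent, the top weighted-degree part of any polynomial in those generators is nonzero in the associated graded algebra, so $\gr Z_p(SY_n)$ is precisely the polynomial algebra on those symbols, which is $Z_p(\g')$.
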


\begin{proof}
From the formula
$A_i(u) = -B_{i+1}(u)B_i(u)^{-1}$, we get that
$A_i^{(r)} = B_i^{(r)} - B_{i+1}^{(r)}$ plus a linear combination of
monomials $B_i^{(r_1)}\cdots B_i^{(r_k)}
B_{i+1}^{(s_1)}\cdots B_{i+1}^{(s_l)}$
with $r_1+\cdots+r_k+s_1+\cdots+s_l = r$.
Combined with (\ref{creek}), it follows that $A_i^{(rp)} \in \F_{rp-p}
Y_n$ and
\begin{equation}\label{lastylifty}
\gr_{rp-p} A_i^{(rp)} = \big(e_{i,i} t^{r-1}
-e_{i+1,i+1} t^{r-1}\big)^p - \big(e_{i,i} t^{rp-p}- e_{i+1,i+1} t^{rp-p}\big).
\end{equation}
Then from (\ref{lastylifty}) and (\ref{snore}), we see
that the generators (\ref{xz}) of $Z_p(SY_n)$
are lifts of generators for $Z_p(\g')$ coming from a basis for $\g'$.
This establishes the algebraic independence and that $\gr Z_p(SY_n) =
Z_p(\g')$.
The final part of the theorem 
follows by similar argument to the proof of Corollary~\ref{nearly2},
using the PBW basis for $SY_n$ from Theorem~\ref{oldsy}.
\end{proof}

In fact, when $p \nmid n$, the $p$-center of $SY_n$ is the full
center, thanks to the following theorem.
This is the positive characteristic counterpart of the observation that $Z(SY_n)$ is
trivial in characteristic zero from \cite[Proposition 2.16]{MNO}.

\begin{Theorem}\label{reallytheend}
If $p \nmid n$ then $Z_p(SY_n) = Z(SY_n)$.
\end{Theorem}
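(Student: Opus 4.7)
The inclusion $Z_p(SY_n) \subseteq Z(SY_n)$ holds by definition, so the plan focuses on the reverse. The key tool is Theorem~\ref{oldsy}, which under the hypothesis $p \nmid n$ gives the algebra isomorphism $SY_n \otimes Z_{\HC}(Y_n) \isoto Y_n$. Since $Z_{\HC}(Y_n) \subseteq Z(Y_n)$, any $z \in Z(SY_n)$ commutes with both $SY_n$ and $Z_{\HC}(Y_n)$, hence with $Y_n = SY_n \cdot Z_{\HC}(Y_n)$. Thus $Z(SY_n) \subseteq Z(Y_n) \cap SY_n$, and it suffices to show that every element of $Z(Y_n) \cap SY_n$ lies in $Z_p(SY_n)$.

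The heart of the argument is to swap the diagonal generators $\{B_i^{(rp)} \:|\: 2 \leq i \leq n\}$ from Theorem~\ref{main}(3) (with $\sigma = 0$) for the elements $\{A_i^{(rp)} \:|\: 1 \leq i < n\}$, which have the virtue of already lying in $SY_n$. I claim that when $p \nmid n$, the set
$$
\{C^{(r)}\:|\:r > 0\} \cup \{A_i^{(rp)}\:|\:1 \leq i < n, r > 0\} \cup \big\{(E_{i,j}^{(r)})^p, (F_{i,j}^{(r)})^p \:\big|\:1 \leq i < j \leq n, r > 0\big\}
$$
is also a free polynomial generating set for $Z(Y_n)$. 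To verify this I would pass to $\gr Z(Y_n) \subseteq Z(\g)$. Writing $u_i := (e_{i,i}t^{r-1})^p - e_{i,i}t^{rp-p}$, formula (\ref{lastylifty}) gives $\gr_{rp-p} A_i^{(rp)} = u_i - u_{i+1}$, whereas Theorem~\ref{loopcentre} says $Z(\g)$ is freely generated by $\{z_r\} \cup \{u_i \:|\: i \geq 2\} \cup \{(e_{i,j}t^r)^p \:|\: i \neq j\}$. The identity $BC(u) = C(u) C(u-1) \cdots C(u-p+1)$ together with~(\ref{lifty}) yields the graded relation $u_1 + u_2 + \cdots + u_n = z_{r-1}^p - z_{rp-p}$. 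Hence translating from $(u_2, \ldots, u_n)$ (with $u_1$ recovered via the sum) to the differences $(u_1 - u_2, \ldots, u_{n-1} - u_n)$ is a linear change of coordinates whose $n \times n$ ``sum-and-differences'' matrix has determinant $\pm n$, hence is invertible over $\k$ precisely when $p \nmid n$. The generating-set swap then lifts from $Z(\g)$ to $Z(Y_n)$ by the usual filtered argument, comparing algebraically independent lifts.

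To conclude, the algebra surjection $\pi: Y_n \twoheadrightarrow Y_n / (Y_n \cdot Z_{\HC}(Y_n)^+) \cong SY_n$ fixes $A_i^{(rp)}, (E_{i,j}^{(r)})^p, (F_{i,j}^{(r)})^p$ (all of which lie in $SY_n$) and kills each $C^{(r)}$. Given $z \in Z(Y_n) \cap SY_n$, write $z$ as a polynomial in the new generating set and apply $\pi$; since $\pi(z) = z$, the output expresses $z$ as a polynomial purely in $\{A_i^{(rp)}, (E_{i,j}^{(r)})^p, (F_{i,j}^{(r)})^p\}$, which by definition generate $Z_p(SY_n)$. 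The main obstacle is the generator-swap step, where the hypothesis $p \nmid n$ enters essentially through the invertibility of the sum-and-differences matrix; when $p \mid n$ the system degenerates, the swap fails, and one expects genuine central elements outside $Z_p(SY_n)$ (cf.\ Remark~\ref{two} in the $n = p = 2$ case).
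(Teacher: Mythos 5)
Your proposal is correct, but it takes a genuinely different route from the paper. The paper's proof never leaves the associated graded level: it combines the containments $\gr Z_p(SY_n) \subseteq \gr Z(SY_n) \subseteq Z(\g')$ with the equality $\gr Z_p(SY_n) = Z_p(\g')$ from the preceding theorem, and then just checks that $Z_p(\g') = Z(\g')$ when $p \nmid n$ — which follows from the decomposition $\g = \g' \oplus \z(\g)$ and Theorem~\ref{loopcentre}. That is a three-line argument. You instead work inside $Y_n$ itself: you use the factorization $SY_n \otimes Z_{\HC}(Y_n) \isoto Y_n$ to get $Z(SY_n) = Z(Y_n) \cap SY_n$, perform a generator swap in $Z(Y_n)$ replacing the $B_i^{(rp)}$ by the $A_i^{(rp)}$ (justified at the graded level by the invertibility of the sum-and-differences matrix, determinant $\pm n$), and then kill the Harish-Chandra part with the projection $\pi$. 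All the steps check out: the swap is validated by~(\ref{lifty}) and~(\ref{lastylifty}) together with the usual lift-of-independent-graded-generators argument, and $\pi$ fixes every generator lying in $SY_n$ while annihilating each $C^{(r)}$. What your version buys is an explicit recipe expressing a given element of $Z(SY_n)$ as a polynomial in the generators~(\ref{xz}), and it makes visible exactly where $p \nmid n$ degenerates; the cost is that you must redo the generator bookkeeping of Theorem~\ref{main}(3), which the paper's graded argument sidesteps entirely. Note also that both proofs ultimately invoke $p \nmid n$ for the same underlying reason — the splitting $\gl_n = \sl_n \oplus \k$ fails precisely when $p \mid n$ — so neither is more general than the other.
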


\begin{proof}
We have that $Z_p(SY_n) \subseteq Z(SY_n)$, hence,
$\gr Z_p(SY_n) \subseteq \gr Z(SY_n) \subseteq Z(\g')$.
In the next paragraph, we show that $Z_p(\g') = Z(\g')$.
We also know that $\gr Z_p(SY_n) = Z_p(\g')$ from the previous
theorem.
Then we get that
$\gr Z_p(SY_n) = \gr Z(SY_n)$
implying that $Z_p(SY_n) = Z(SY_n)$.

To show that $Z_p(\g') = Z(\g')$, the assumption $p\nmid n$ implies
that $\g = \g' \oplus \mathfrak{z}(\g)$.
Hence, $Z(\g) \cong Z(\g') \otimes \k[z_r\:|\:r \geq 0]$.
It remains to observe that the elements
$\{x^p - x^{[p]}\:|\:x \in \g'\} \cup \{z_r\:|\:r \geq 0\}$
generate $Z(\g)$.
This follows from
Theorem~\ref{loopcentre} using the assumption $p \nmid n$.
\end{proof}

\subsection{\boldmath Another description of the $p$-centre of $Y_n$}
Recall by the definition (\ref{pcent}) and Theorems~\ref{Z1} and \ref{dc} 
that the $p$-centre $Z_p(Y_n)$ is the subalgebra of $Z(Y_n)$ generated
by the coefficients of the power series 
$B_i(u)$, $P_{i,j}(u)$ and $Q_{i,j}(u)$.
Let
\begin{equation}\label{protest}
S_{i,j}(u) = \sum_{r \geq 0} S_{i,j}^{(r)} u^{-r} 
:= T_{i,j}(u) T_{i,j}(u-1)\cdots T_{i,j}(u-p+1)
\in Y_n[[u^{-1}]].
\end{equation}
In view of (\ref{Theycommute}), the order of the product on the right hand side
here is irrelevant.

\begin{Lemma}\label{lastlem}
All of the elements $S_{i,j}^{(r)}$ belong to the $p$-centre
$Z_p(Y_n)$.
\end{Lemma}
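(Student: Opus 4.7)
The plan is to establish the lemma in three stages: first, show each coefficient $S_{i,j}^{(r)}$ is central in $Y_n$; second, compute its leading term in $\gr Y_n = U(\g)$ using Section~\ref{S:powerseries}; third, identify $S_{i,j}^{(r)}$ with an element of $Z_p(Y_n)$ by a filtration argument.

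For centrality, I would work in $Y_n[[u^{-1},v^{-1}]][u,v]$ to show $[T_{k,l}(v), S_{i,j}(u)] = 0$ for every $(k,l)$. The case $(k,l) = (i,j)$ is immediate from the self-commutation (\ref{Theycommute}). For $(k,l) \neq (i,j)$, I would multiply the commutator by $\pi(u,v) := (u-v)(u-v-1)\cdots(u-v-p+1)$ and expand via the Leibniz rule, substituting the power-series RTT identity
\[
(u-v)[T_{i,j}(u), T_{k,l}(v)] = T_{k,j}(u)T_{i,l}(v) - T_{k,j}(v)T_{i,l}(u),
\]
then verify that the resulting polynomial identity collapses via telescoping cancellation, with the required reorderings of factors $T_{i,j}(u-a)$ permitted by (\ref{Theycommute}). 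Dividing by $\pi(u,v)$ in the zero-divisor-free ring $Y_n[[u^{-1},v^{-1}]][u,v]$ then yields the claim.

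Second, I would treat $S_{i,j}(u)$ in the commutative subalgebra $\A_{i,j} := \k[T_{i,j}^{(r)} : r \geq 1]\subseteq Y_n$ as a series of type III (when $i=j$, with $X^{(r)} = T_{i,i}^{(r)}$) or type IV (when $i\neq j$, with $X^{(r)} = T_{i,j}^{(r)}$). Applying Lemmas~\ref{birds} and \ref{lemIV} gives $S_{i,j}^{(r)} \in \F_{r-p-1}Y_n$ for $r > p$ with $p \nmid r$, and for $p \mid r$,
\[
\gr_{rp-p} S_{i,j}^{(rp)} = (e_{i,j}t^{r-1})^p - \delta_{i,j} e_{i,i}t^{rp-p},
\]
which lies in $Z_p(\g) \cap U(\g_{i,j})$, where $\g_{i,j}\subseteq\g$ is the abelian span of $\{e_{i,j}t^r : r \geq 0\}$. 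A short computation from Theorem~\ref{loopcentre} identifies $Z(\g) \cap U(\g_{i,j}) = Z_p(\g) \cap U(\g_{i,j}) = \k[(e_{i,j}t^r)^p - \delta_{i,j}e_{i,i}t^{rp} : r \geq 0]$. Combined with the first stage, this gives $S_{i,j}^{(r)} \in Z(Y_n) \cap \A_{i,j}$ with $\gr S_{i,j}^{(r)} \in Z_p(\g)$. I would conclude by induction on the filtered degree: at each stage one subtracts a matching lift in $Z_p(Y_n)$ built from the generators $B_i^{(rp)}$ (for $i=j$) and $(E_{i,j}^{(r)})^p$ or $(F_{j,i}^{(r)})^p$ (for $i \neq j$) provided by Theorem~\ref{main}(2), then iterates on the central residue of strictly smaller filtered degree. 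The induction terminates because $S_{i,j}^{(r)} = 0$ for $r < p$.

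The main obstacle is the centrality calculation in the first stage. The RTT commutator introduces the mixed series $T_{k,j}$ and $T_{i,l}$ whose factors do not commute with the remaining $T_{i,j}$'s in $S_{i,j}(u)$, so exhibiting the precise telescoping cancellation of $\pi(u,v)[T_{k,l}(v), S_{i,j}(u)]$ requires delicate bookkeeping; an induction on the length $p$ of the product may be needed to make the pattern transparent. A conceptual alternative would be to interpret $S_{i,j}(u)$ as a central matrix entry emerging from the quantum-fusion procedure applied to $T(u)T(u-1)\cdots T(u-p+1)$ against a symmetric projector on $p$-fold tensors, but verifying the relevant fusion identities in positive characteristic is nontrivial in its own right.
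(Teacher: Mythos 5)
There are two genuine gaps here, one in each half of your argument. First, the centrality of $S_{i,j}^{(r)}$ is not actually established: you assert that $\pi(u,v)[T_{k,l}(v),S_{i,j}(u)]$ ``collapses via telescoping cancellation'' but then concede this is the main obstacle and leave the bookkeeping undone. This is not a routine verification -- the cross terms $T_{k,j}(\cdot)T_{i,l}(\cdot)$ produced by the RTT relation do not commute with the remaining factors $T_{i,j}(u-a)$, and there is no reason to expect a clean telescope for arbitrary $(k,l)$. The paper avoids this computation entirely: the permutation automorphism of \textsection\ref{saut} sends $S_{i,j}(u)\mapsto S_{w(i),w(j)}(u)$, reducing everything to $(i,j)\in\{(1,1),(1,2)\}$, and then the identities $S_{1,1}(u)=B_1(u)$ and $S_{1,2}(u)=B_1(u)P_{1,2}(u)$ (the latter via the Gauss factorization $T_{1,2}(u)=D_1(u)E_1(u)$ and the already-proved relation (\ref{Beq3})) exhibit $S_{i,j}(u)$ as a product of series whose coefficients are known to be central by Lemmas~\ref{heat} and~\ref{walk}. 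If you want to keep your direct approach, you would need to either carry out the induction on the length of the product in full, or adopt some such reduction.

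Second, and more seriously, your filtration induction does not prove membership in $Z_p(Y_n)$ as opposed to $Z(Y_n)$ -- which is precisely the delicate point of the lemma. After you subtract a matching lift such as $B_i^{(rp)}$ or $\bigl(E_{i,j}^{(r)}\bigr)^p$, the residue is central of lower filtered degree, but it no longer lies in the commutative subalgebra $\A_{i,j}$, so the only control you retain is $\gr Z(Y_n)\subseteq Z(\g)$. By Theorem~\ref{loopcentre}, $Z(\g)$ is strictly larger than $Z_p(\g)$: it also contains the Harish--Chandra part $\k[z_r\,|\,r\geq 0]$. Your induction therefore cannot rule out that the residue has a leading term involving the $z_r$, which would force you to subtract elements of $Z_{\HC}(Y_n)$ rather than $Z_p(Y_n)$, and the argument would not close. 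The paper's resolution is structural rather than inductive: one first observes $Z_p(Y_n)=Y_n\cap Z_p(Y_{n+1})$ (from the free generators in Theorem~\ref{main}(2)) to reduce to the case $p\nmid n$, and then notes that $S_{i,j}(u)S_{1,1}(u)^{-1}$ is fixed by every automorphism $\mu_f$, hence lies in $SY_n[[u^{-1}]]$; being central, its coefficients lie in $Z(SY_n)=Z_p(SY_n)\subset Z_p(Y_n)$ by Theorem~\ref{reallytheend}, and one multiplies back by $S_{1,1}(u)=B_1(u)$. Some input of this kind, pinning the central element inside the $p$-centre rather than the full centre, is indispensable and is missing from your proposal.
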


\begin{proof}
First we show that each $S_{i,j}^{(r)}$ belongs to $Z(Y_n)$.
To see this, using the
conjugation automorphism
from \textsection \ref{saut} which sends $S_{i,j}(u)$ to $S_{w(i),
  w(j)}(u)$,
we reduce to proving that all coefficients of $S_{1,1}(u)$ and of
$S_{1,2}(u)$ are central.
The latter assertions follow because
\begin{align}
S_{1,1}(u) &
= B_1(u),\label{ob}\\
S_{1,2}(u) &= B_1(u) P_{1,2}(u).\label{od}
\end{align}
The first identity (\ref{ob}) here is immediate as $T_{1,1}(u) = D_1(u)$.
To prove (\ref{od}), we set $v = u-m$ in (\ref{Beq3}) to deduce 
that
\begin{equation*}
E_i(u-m) D_i(u-m+1)\cdots D_i(u-1) D_i(u)
 =D_i(u-m+1)\cdots D_i(u-1) D_i(u) E_i(u)
\end{equation*}
for each $m=1,\dots,p-1$.
The $(1,2)$-entry of (\ref{gfact}) gives that $T_{1,2}(u) = D_1(u)
E_1(u)$.
Hence, we get that
\begin{align*}
S_{1,2}(u) &= T_{1,2}(u-p+1)\cdots T_{1,2}(u-1) T_{1,2}(u)\\
&=
D_1(u-p+1) E_1(u-p+1) \cdots D_1(u-1) E_1(u-1) D_1(u) E_1(u)
\\&=
D_1(u-p+1) \cdots D_1(u-1) D_1(u) E_1(u)^p = B_1(u) P_{1,2}(u).
\end{align*}
This establishes (\ref{od}).

The delicate point now is to show that $S_{i,j}^{(r)}$ actually lies in
$Z_p(Y_n)$ not just $Z(Y_n)$.
By Theorem~\ref{main}(2), we have that 
$Z_p(Y_n) = Y_n \cap
Z_p(Y_{n+1})$,
where we are implicitly using the natural embedding $Y_n\hookrightarrow
Y_{n+1}, T_{i,j}^{(r)} \mapsto T_{i,j}^{(r)}$.
Hence,
in order to prove that $S_{i,j}^{(r)} \in Z_p(Y_n)$, we
may assume
that $p \nmid n$.

So finally we assume $p \nmid n$ and show that $S_{i,j}(u) \in
Z_p(Y_n)[[u^{-1}]]$.
This is immediate by (\ref{ob}) in case $i=j=1$.
In general, we show equivalently that
$S_{i,j}(u) S_{1,1}(u)^{-1} \in Z_p(Y_n)[[u^{-1}]]$.
Using the definition (\ref{poppy}), we get that $S_{i,j}(u)
S_{1,1}(u)^{-1} \in SY_n[[u^{-1}]]$. Since we have shown its
coefficients are central already, it therefore lies in 
$Z(SY_n)[[u^{-1}]]$, which 
by Theorem~\ref{reallytheend} and the definitions is $Z_p(SY_n)[[u^{-1}]]\subset
Z_p(Y_n)[[u^{-1}]]$.
\end{proof}

\begin{Theorem}\label{finalt}
The $p$-centre $Z_p(Y_n)$ is freely generated by 
$\big\{S_{i,j}^{(rp)}\:\big|\:1 \leq i,j \leq n, r > 0\big\}$.
We have that $S_{i,j}^{(rp)} \in \F_{rp-p} Y_n$ and
\begin{equation}\label{this}
\gr_{rp-p} S_{i,j}^{(rp)} = \big(e_{i,j} t^{r-1}\big)^p
- \delta_{i,j} e_{i,j} t^{rp-p}.
\end{equation}
For $0 < r< p$, we have that $S_{i,j}^{(r)} = 0$.
For $r \geq p$ with $p \nmid r$, the central element $S_{i,j}^{(r)}$ 
belongs to $\F_{rp-p-1} Y_n$ and it may be expressed as a polynomial
in the elements $\big\{S_{i,j}^{(ps)}\:\big|\:0 < s \leq \lfloor r/p
\rfloor \big\}$.
\end{Theorem}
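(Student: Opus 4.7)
The plan is to identify $S_{i,j}(u)$ as a series of type III or IV in the sense of Section~\ref{S:powerseries}, apply Lemmas~\ref{birds} and \ref{lemIV} for the filtration and vanishing assertions, invoke Lemma~\ref{lastlem} for centrality, and combine with Theorem~\ref{main}(2) for free generation via a filtered-graded lift. The polynomial expression for $S_{i,j}^{(r)}$ with $p \nmid r$ will require restricting attention to a suitable commutative subalgebra.

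Fix $1 \leq i,j \leq n$. The elements $\{T_{i,j}^{(r)} : r > 0\}$ commute pairwise by (\ref{Theycommute}), lie in $\F_{r-1} Y_n$ with $\gr_{r-1} T_{i,j}^{(r)} = e_{i,j} t^{r-1}$, and satisfy $T_{i,j}^{(0)} = \delta_{i,j}$. Hence $S_{i,j}(u)$ is a series of type III when $i = j$ and of type IV when $i \neq j$, taking $X^{(r)} := T_{i,j}^{(r)}$, and Lemma~\ref{birds} (resp. Lemma~\ref{lemIV}) yields immediately: $S_{i,j}^{(r)} = 0$ for $0 < r < p$; $S_{i,j}^{(rp)} \in \F_{rp-p} Y_n$ with $\gr_{rp-p} S_{i,j}^{(rp)} = (e_{i,j} t^{r-1})^p - \delta_{i,j} e_{i,j} t^{rp-p}$, proving (\ref{this}); and $S_{i,j}^{(r)} \in \F_{r-p-1} Y_n$ for $p \nmid r$ with $r \geq p$.

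By Lemma~\ref{lastlem} each $S_{i,j}^{(rp)}$ lies in $Z_p(Y_n)$; let $Z$ be the subalgebra they generate. Theorem~\ref{main}(2) applied with trivial shift matrix gives $\gr Z_p(Y_n) = Z_p(\g)$, and by (\ref{zdf}) this is freely generated by $\{(e_{i,j} t^r)^p - \delta_{i,j} e_{i,j} t^{rp} : 1 \leq i,j \leq n, r \geq 0\}$. Since (\ref{this}) identifies these with the graded images $\gr_{rp-p} S_{i,j}^{(rp)}$, we obtain $\gr Z = Z_p(\g) = \gr Z_p(Y_n)$, hence $Z = Z_p(Y_n)$ and the $S_{i,j}^{(rp)}$ are algebraically independent.

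For the final polynomial assertion, introduce the commutative subalgebra $Y_n^{(i,j)} := \k[T_{i,j}^{(s)} : s > 0] \subseteq Y_n$. A PBW argument identifies $\gr Y_n^{(i,j)}$ with $U(\g^{(i,j)})$, where $\g^{(i,j)} \subset \g$ is the abelian Lie subalgebra spanned by $\{e_{i,j} t^r : r \geq 0\}$, and for $n \geq 2$ a direct adjoint-action computation (or Lemma~\ref{symmetricinvariants} together with Theorem~\ref{loopcentre}) gives
\[
Z(U(\g)) \cap U(\g^{(i,j)}) = \k\bigl[(e_{i,j} t^r)^p - \delta_{i,j} e_{i,j} t^{rp} : r \geq 0\bigr].
\]
The standard containment $\gr(Z(Y_n) \cap Y_n^{(i,j)}) \subseteq Z(U(\g)) \cap U(\g^{(i,j)})$, combined with the graded lifts just described, shows $Z(Y_n) \cap Y_n^{(i,j)}$ is freely generated by $\{S_{i,j}^{(sp)} : s > 0\}$. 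Since $S_{i,j}^{(r)}$ is central by Lemma~\ref{lastlem} and lies in $Y_n^{(i,j)}$ by construction, it is a polynomial in these generators; a filtered-degree count, using $S_{i,j}^{(r)} \in \F_{r-p-1} Y_n$ and the algebraic independence of the graded generators of $Z_p(\g)$, forces only $s \leq \lfloor r/p \rfloor$ to appear. The case $n = 1$ is handled via the universal polynomial identity of Remark~\ref{later2} applied to the free polynomial algebra $Y_1 = \k[T_{1,1}^{(r)}]$. The main obstacle is this last paragraph: while centrality of $S_{i,j}^{(r)}$ is immediate, ensuring that only $S_{i,j}^{(sp)}$ with matching indices $(i,j)$ enter the polynomial expression (rather than other off-diagonal generators of the global $p$-centre) requires working inside the small subalgebra $Y_n^{(i,j)}$ and exploiting the precise shape of its intersection with $Z(Y_n)$.
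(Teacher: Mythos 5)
Your proposal is correct and follows essentially the same route as the paper's proof: Lemmas~\ref{birds} and \ref{lemIV} applied with $X^{(r)} := T_{i,j}^{(r)}$ for the graded images and vanishing, Lemma~\ref{lastlem} plus passage to $\gr Z_p(Y_n) = Z_p(\g)$ for free generation, and the intersection $Z(Y_n) \cap Y_{i,j}$ with the one-row subalgebra for the polynomial assertion. The only (immaterial) divergence is the treatment of $n=1$, which the paper handles by embedding into a larger $Y_n$ rather than by citing the universal identity of Remark~\ref{later2}.
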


\begin{proof}
When $n=1$, the first statement follows immediately from the definition of
$Z_p(Y_n)$, remembering (\ref{ob}). The remaining statements follow too if we can prove them for larger $n$.
So we assume from now on that $n \geq 2$.
To prove (\ref{this}), 
we apply Lemma~\ref{birds} if $i=j$ or Lemma~\ref{lemIV} if $i \neq j$,
taking $X^{(r)} := T_{i,j}^{(r)}$.
These lemmas also show that
$S_{i,j}^{(r)} = 0$ for $0 < r < p$ and that
$S_{i,j}^{(r)} \in \F_{rp-p-1} Y_n$ when $p \nmid r \geq p$.

Let $Y_{i,j}$ be the subalgebra of $Y_n$ generated by the elements
$\big\{T_{i,j}^{(r)}\:\big|\:r > 0\big\}$ and $\g_{i,j}$ be the
subalgebra of $\g$ spanned by $\{e_{i,j}t^r\:|\:r \geq 0\}$.
We have that $\gr Y_{i,j} = U(\g_{i,j})$ and
\begin{equation}
Z(\g) \cap U(\g_{i,j}) = \k\left[\big(e_{i,j}t^r\big)^p - \delta_{i,j}
  e_{i,j} t^{rp}\:\Big|\:r \geq 0\right]
\end{equation}
just like in (\ref{finalz1}) and (\ref{finalz2}).
Combined with (\ref{this}),
it follows that $Z(Y_n) \cap Y_{i,j}$ is freely generated by
$\big\{S_{i,j}^{(rp)}\:\big|\:r > 0\big\}$; this is exactly the same
argument as used in the proofs of Theorems~\ref{Z1} and \ref{dc}.
The last assertion in the statement of the theorem follows.

Finally, we must prove the first assertion.
Lemma~\ref{lastlem} shows that $S_{i,j}^{(rp)}$ lies in $Z_p(Y_n)$.
To show that 
$\big\{S_{i,j}^{(rp)}\:\big|\:1 \leq i,j \leq n, r > 0\big\}$
are algebraically independent and generate
$Z_p(Y_n)$, we pass to the associated graded algebra
using (\ref{this}),
to see that they are lifts of the generators of
$\gr Z_p(Y_n) = Z_p(\g)$ from (\ref{zdf}).
\end{proof}

\begin{Remark}\label{rainer}
Similar to Remark~\ref{later2}, Theorem~\ref{finalt} justifies Remark~\ref{rainier}.
\end{Remark}

When combined with Theorem~\ref{main} and Corollary~\ref{nearly}, 
Theorem~\ref{finalt} finally establishes all of
the statements about $Z(Y_n)$ that we formulated in the introduction.
We should also note for this that the central elements
$C^{(r)}$ defined by the quantum determinant in the introduction 
are the same as the ones 
arising from (\ref{Cdef}).
This is a non-trivial observation which is proved in characteristic
zero in \cite[Theorem 8.6]{BK1};  the argument there works over $\Z$, hence,
also in positive characteristic.

\end{document}